 \definecolor{refkey}{gray}{.85}   
 \definecolor{labelkey}{rgb}{0.3,0,0.6} 
\newcommand{\FG}{\bm}
\newcommand\e{\eps}
\newtheorem{theorem}{Theorem}[section]
\newtheorem{lemma}[theorem]{Lemma}
\newtheorem{assumption}[theorem]{Assumption}
\theoremstyle{definition}
\newtheorem{definition}[theorem]{Definition}
\declaretheorem[name=Remark,numberlike=theorem,qed=\qedsymbol]{remark}
\newcommand{\ee}{{\mathrm e}} 
\def\bbG{{\mathbb G}}
  \def\calC{{\mathcal C}}
\def\calD{{\mathcal D}} \def\calE{{\mathcal E}} \def\calF{{\mathcal F}}
\def\calM{{\mathcal M}} \def\calN{{\mathcal N}} 
\def\calP{{\mathcal P}}  \def\calR{{\mathcal R}}
  \def\calU{{\mathcal U}}
 \def\calW{{\mathcal W}} 
\def\rmd{{\mathrm d}}
 \def\rmB{{\mathrm B}} \def\rmC{{\mathrm C}} 
\def\rmD{{\mathrm D}}   
 \def\rmH{{\mathrm H}}
 \def\rmT{{\mathrm T}}
\def\FG{\mathbf}
 \def\bfH{{\FG H}}
 \def\bfQ{{\FG Q}}  
\def\bfV{{\FG V}}   
\newcommand{\eps}{\varepsilon}
\def\R{{\mathbb R}}   
\newcommand{\mafo}{\mathrm}
\def\dd{\;\!\mathrm{d}} 
\newcommand{\ti}{\times}
\newcommand{\set}[2]{ \{\:\! #1 \: | \: #2 \:\!\} }
\newcommand{\bigset}[2]{ \big\{\:\! #1 \: \big| \: #2 \:\!\big\} }
\newcommand{\Bigset}[2]{ \Big\{\:\! #1 \; \Big| \; #2 \:\!\Big\} }
\newcommand{\pl}{\partial}
\newcommand{\ba}{\begin{array}} \newcommand{\ea}{\end{array}}
\newcommand{\wt}{\widetilde} \newcommand{\wh}{\widehat}
\numberwithin{equation}{section}
\numberwithin{figure}{section}
\def\Gto{\xrightarrow{\Gamma}}
\def\Glongto{\stackrel{\Gamma}\longrightarrow}
\newcommand{\EDPto}{\overset{\mafo{EDP}}{\longrightarrow}}
\newcommand{\tiEDPto}{\xrightarrow{\mafo{tiEDP}}}
\newcommand{\coEDPto}{\xrightarrow{\mafo{coEDP}}}
\newcommand{\eff}{{\mafo{eff}}}%
\newcommand{\sign}{\mathop{\mafo{sign}}}
\newcommand{\mfD}{\mathfrak D}
\newcommand{\mfJ}{\mathfrak J}
\newcommand{\sfC}{\mathsf C} 
\newcommand{\contactEDP}{contact-EDP convergence}
\newcommand{\ProbMeas}{\mathcal P}
\DeclareMathOperator\Prob{Prob}
\long\def\drop#1{}
\newcommand{\AAA}{\color{blue}}
\let\AAA\relax
\newcommand{\BBB}{\color{red}}
\let\BBB\relax
\newcommand{\CCC}{\color{magenta}}
\let\CCC\relax
\newcommand{\EEE}{\color{black}}
\let\save@mathaccent\mathaccent
\newcommand*\if@single[3]{%
	\setbox0\hbox{${\mathaccent"0362{#1}}^H$}%
	\setbox2\hbox{${\mathaccent"0362{\kern0pt#1}}^H$}%
	\ifdim\ht0=\ht2 #3\else #2\fi
}
\newcommand*\rel@kern[1]{\kern#1\dimexpr\macc@kerna}
\newcommand*\widebar[1]{\@ifnextchar^{{\wide@bar{#1}{0}}}{\wide@bar{#1}{1}}}
\newcommand*\wide@bar[2]{\if@single{#1}{\wide@bar@{#1}{#2}{1}}{\wide@bar@{#1}{#2}{2}}}
\newcommand*\wide@bar@[3]{%
	\begingroup
	\def\mathaccent##1##2{%
		\let\mathaccent\save@mathaccent
		\if#32 \let\macc@nucleus\first@char \fi
		\setbox\z@\hbox{$\macc@style{\macc@nucleus}_{}$}%
		\setbox\tw@\hbox{$\macc@style{\macc@nucleus}{}_{}$}%
		\dimen@\wd\tw@
		\advance\dimen@-\wd\z@
		\divide\dimen@ 3
		\@tempdima\wd\tw@
		\advance\@tempdima-\scriptspace
		\divide\@tempdima 10
		\advance\dimen@-\@tempdima
		\ifdim\dimen@>\z@ \dimen@0pt\fi
		\rel@kern{0.6}\kern-\dimen@
		\if#31
		\overline{\rel@kern{-0.6}\kern\dimen@\macc@nucleus\rel@kern{0.4}\kern\dimen@}%
		\advance\dimen@0.4\dimexpr\macc@kerna
		\let\final@kern#2%
		\ifdim\dimen@<\z@ \let\final@kern1\fi
		\if\final@kern1 \kern-\dimen@\fi
		\else
		\overline{\rel@kern{-0.6}\kern\dimen@#1}%
		\fi
	}%
	\macc@depth\@ne
	\let\math@bgroup\@empty \let\math@egroup\macc@set@skewchar
	\mathsurround\z@ \frozen@everymath{\mathgroup\macc@group\relax}%
	\macc@set@skewchar\relax
	\let\mathaccentV\macc@nested@a
	\if#31
	\macc@nested@a\relax111{#1}%
	\else
	\def\gobble@till@marker##1\endmarker{}%
	\futurelet\first@char\gobble@till@marker#1\endmarker
	\ifcat\noexpand\first@char A\else
	\def\first@char{}%
	\fi
	\macc@nested@a\relax111{\first@char}%
	\fi
	\endgroup
}
\title{Exploring families of \\energy-dissipation landscapes 
   via tilting\\
  --- three types of EDP convergence
   \thanks{The research of A.~Mielke \CCC has been partially funded by Deutsche
Forschungsgemeinschaft (DFG) through the Collaborative Research Center
SFB 1114 ``\emph{Scaling Cascades in Complex Systems}'' (Project no.\
235221301),  Subproject C05 ``Effective models for materials and
interfaces with multiple scales''.}}
\author{Alexander Mielke\thanks{Weierstra\ss-Institut f\"ur Angewandte
    Analysis und Stochastik (Berlin) and Humboldt-Universit\"at zu
    Berlin, Germany.} 
  \and Alberto Montefusco%
     \thanks{Zuse-Institut Berlin, Mathematics for Life and Materials Sciences, Germany.}
  \and Mark A. Peletier\thanks{Technische Universiteit Eindhoven, Centre for Analysis, Scientific
    Computing, and Applications, and Institute for Complex Molecular Systems, The Netherlands.}}
\date{December 18, 2019. \CCC Revised September 17, 2020 \EEE }
\begin{document}
\maketitle 

\begin{abstract}
  We introduce two new concepts of convergence of gradient systems
  $(\bfQ, \calE_\e,\calR_\e)$ to a limiting gradient system
  $(\bfQ,\calE_0,\calR_0)$.  These new concepts are called `EDP convergence
  with tilting' and `contact--EDP convergence with tilting'. Both are based on
  the Energy-Dissipation-Principle (EDP) formulation of solutions of gradient
  systems, and can be seen as \CCC refinements of the Gamma-convergence for
  gradient flows \EEE first introduced by Sandier and Serfaty.
 
 The two new concepts are constructed in order to avoid the `unnatural'
 limiting gradient structures that sometimes arise as limits in
 EDP-convergence.  EDP-convergence with tilting is a strengthening of
 EDP-convergence by requiring EDP-convergence for a full family of `tilted'
 copies of $(\bfQ, \calE_\e,\calR_\e)$. It avoids unnatural limiting gradient
 structures, but many interesting systems are non-convergent according to this
 concept.  Contact--EDP convergence with tilting is a relaxation of EDP
 convergence with tilting, and still avoids unnatural limits but applies to a
 broader class of sequences $(\bfQ, \calE_\e,\calR_\e)$.
 
 In this paper we define these concepts, study their properties, and connect
 them with classical EDP convergence. We illustrate the different concepts on a
 number of test problems.
 \end{abstract}

\section{Introduction to gradient systems, gradient flows, and kinetic relations}
\label{s:Intro}

\subsection{Gradient systems}
\label{su:I.GradSyst}

A \emph{gradient system} is a triple $(\bfQ,\calE,\calR)$ of a 
state space $\bfQ$, a functional $\calE$ on $\bfQ$, and a
\emph{dissipation potential}~$\calR$.  This triple defines in a
unique way a differential equation for the evolution $t\mapsto q(t)$
of the states, the so-called \emph{gradient-flow equation}:
\begin{equation}
  \label{eq:I.GradFlow}
  0 = \rmD_{\dot q} \calR(q(t), \dot q(t)) + \rmD \calE(q(t)),
\end{equation}
which can be seen as a balance of thermodynamical forces, namely the
potential restoring force $-\rmD \calE(q)$ and the viscous force $
\xi= \rmD_{\dot q} \calR(q, \dot q)$ induced by the rate $\dot
q$. Indeed, any functional dependence $\xi= K(q,\dot q)$ or $\dot
q=G(q,\xi)$ between the rate $\dot q$ and the dual (viscous) friction
force $\xi$ is often called a \emph{kinetic relation}.  Gradient-flow
equations are distinguished by two facts:
\begin{itemize} \itemsep-0.2em
\item[(i)] the kinetic relation $K$ is given as a (sub)differential of a
dissipation potential,\\ i.e.\ $K(q,\dot q)= \rmD_{\dot q}\calR(q,\dot
q)$, and
\item[(ii)] the viscous force $\xi$ is counterbalanced by a
potential restoring force,  i.e.\ $\xi = - \rmD \calE(q)$. 
\end{itemize}
These two conditions allow for a variational characterization for
the gradient-flow equation~\eqref{eq:I.GradFlow},
the so-called \emph{energy-dissipation principle}, which is the basis
of this work; see Section~\ref{sec:GS-convergence} for this and a more
detailed description to gradient systems.

Using the Fenchel-Legendre transform one can define a dual dissipation
potential $\calR^*(q,\xi)$ such that the kinetic relation can be
written through any of the three equivalent conditions
\begin{equation}
\label{eq:I.kin-rel} 
 \begin{aligned}
  &\xi = K(q,v)=\rmD_v\calR(q,v), \\ 
  &v =G(q,\xi)= \rmD_\xi\calR^*(q,\xi), \qquad \text{or}\\ 
  &\calR(q,v) + \calR^*(q,\xi) = \langle \xi,v\rangle.
 \end{aligned}
\end{equation}

While, for a given gradient system $(\bfQ,\calE,\calR)$, the
gradient-flow equation \eqref{eq:I.GradFlow} is uniquely given and may
be rewritten in the form
\begin{equation}
  \label{eq:I.bfV}
  \dot q = \bfV(q) \coloneqq \rmD_\xi \calR^*(q, {-}\rmD \calE(q)) = G(q,
{-}\rmD \calE(q)),
\end{equation}
the opposite direction, however,  shows a strong non-uniqueness for a
given vector field~$\bfV$ and a given energy $\calE$ there may be 
many kinetic relations $G$, and even many dual 
dissipation potentials $\calR^*$, such that $\bfV$ is generated as in
\eqref{eq:I.bfV}. 

We say that that the differential equation $\dot q
=\bfV(q)$ has the \emph{gradient structure} $(\bfQ,\calE,\calR)$ if 
$\bfV(q)=\rmD_\xi \calR^*(q, {-}\rmD \calE(q))$. Adding such a
gradient structure to a differential equation means to identify
additional thermodynamical information that is no longer visible in
the induced gradient-flow equation $\dot
q=\bfV(q)$.

\subsection{First example: a simple spring-damper system}
\label{su:SpringDamper}

\begin{figure}[ht]
\labellist
\pinlabel $q$ [t] at 54 1
\pinlabel $k$ [b] at 36 23
\pinlabel $\mu$ [b] at 76 27
\endlabellist
\centering
\includegraphics[width=5cm]{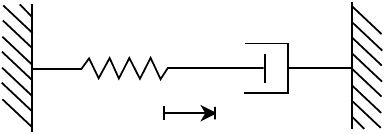}
\vskip1mm
\caption{A spring-damper system. The spring has spring constant $k$,
  and the damper has the viscosity constant $\mu$.}
\label{fig:springdamper}
\end{figure}
We first illustrate the concept of a gradient system with an example, in which a spring
relaxes by moving a damper (a shock absorber), see
Figure~\ref{fig:springdamper}.
The state of the system is the spring displacement $q\in \R$, the
energy contained in the spring is $\calE_1(q) \coloneqq kq^2/2$, and the
spring exerts a force $\xi$ equal to the negative derivative $-\rmD
\calE_1(q) = -kq$ of the energy. The damper is defined by the property
that its \AAA rate~$v$ of displacement \EEE is related to the force~$\xi$ on the damper by $\mu v =
\xi$, for some coefficient $\mu>0$. By combining these two relations
we find the evolution equation for the state~$q$,
\begin{equation}
\label{eq:springdamper}
\mu \dot q = -kq.
\end{equation}

We identify  equation~\eqref{eq:springdamper} as the gradient-flow
equation for $(\R,\calE_1,\calR_1)$, when we observe that the
damper relation $\mu v = \xi$ can 
also be written in terms of a dissipation potential $\calR_1(v) \coloneqq \mu
v^2/2$ and its Legendre dual $\calR_1^*(\xi) \coloneqq \xi^2/(2\mu)$. The
dissipation potential $\calR_1$ defines the kinetic relation $\mu v =
\xi$.

In this example, one readily recognizes a `classical' spring energy in
$\calE_1(q) = kq^2/2$, and the quadratic form of $\calR_1(v) = \mu
v^2/2$ is a natural choice for a damper (see
e.g.~\cite[Ch.~5]{PeletierVarMod14TR}). However, other gradient-flow
formulations for the same evolution equation~\eqref{eq:springdamper}
exist, if $\calR=\calR(q,v)$ may depend not only on the rate $v =\dot q$ but
also on the state $q$:  
\begin{align*}
\calE_2 &\coloneqq \calE_1,
 &\qquad \calE_3 &\coloneqq \calE_1,\\
\calR_2(q,v) &\coloneqq \frac{\mu}{1{+}\alpha
  k^2q^2/\mu^2}\Bigl(\frac12v^2+\frac\alpha4 v^4\Bigr) ,
 &\qquad \calR_3(q,v) &\coloneqq  \frac{kq}{1{-}\ee^{-kq/\mu}} 
  \bigl( \ee^v {-} v {-} 1\bigr).
\end{align*}
All the systems $(\R,\calE_i,\calR_i)$ generates the same
equation~\eqref{eq:springdamper} via $\rmD_v\calR_i(q,\dot q) =
-\rmD\calE_i(q)$.

In fact, even in this simple scalar example, one can
generate a wide variety of gradient systems for the same
equation~\eqref{eq:springdamper}: take any smooth and convex
$\psi\colon\R\to\R$ with $\min\psi = \psi(0) = 0$, define $\varphi(q) =
-kq/\psi'(-kq/\mu)$ and $\calR_\psi(q,v) \coloneqq \varphi(q)\psi(v)$, and
then the gradient system $(\R,\calE_1, \calR_\psi)$ will generate
equation~\eqref{eq:springdamper}. The two examples $\calR_2$ and
$\calR_3$ above are both of this type.

These dissipation potentials might well be considered less `natural'
than $\calR_1$. To start with, it is not obvious which modeling
arguments would lead to the kinetic relations of $\calR_2$ and
$\calR_3$, which are
\[
\mu\bigl(v{+}\alpha v^3\bigr) = \Bigl(1{+}\alpha
\frac{k^2q^2}{\mu^2} \Bigr)\,\xi \quad \text{(for $\calR_2$)},\qquad \text{and}\quad
\ee^v -1 = \frac{1-\ee^{-kq/\mu}}{kq}\, \xi \quad \text{(for $\calR_3$).}
\]
In addition, a definition like that of $\calR_3$ is dimensionally inconsistent,
since arguments of the exponential function should be dimensionless. Both these
problems are related to a deeper and more troubling problem: the dissipation potentials depend not only on $\mu$ but also on $k$, implying that
the kinetic relation generated by $\calR_2$ or $\calR_3$, which is supposed to
characterize the damper, depends on the strength $k$ of the spring. This is an
unsatisfactory situation: we consider the spring and the damper to be two
independent objects, and their mathematical characterizations should therefore
also be independent.

\medskip

This example points towards the problem that we aim to solve in this
paper. This problem arises especially when taking limits of gradient systems in
some parameter $\e\to0$; in such limits it is unavoidable that the limiting
dissipation potential depends on the state~$q$ as well as the rate of
change~$v$. As a result, the limiting evolution equation will have many
gradient-flow structures, as in the example above. It turns out that one of the
most common concepts used to define limits of gradient systems, which we call
`simple EDP convergence' in this paper and which we explain below, often
selects limit dissipation potentials that are `unhealthy' in the same way as
$\calR_2$ and $\calR_3$ are `unhealthy': they depend on aspects of the energy
in an unsatisfactory way.

The aim of this paper is to construct alternative convergence concepts that
lead to limiting gradient systems that are more `natural' or `healthy'. What we
mean by these terms will become clear below, but first we consider an example
to further illustrate the problem.

\subsection{Second example: wiggly dissipation}
\label{subsec:intro-wiggly-dissipation}

In Section~\ref{se:WigglyDiss} we study the following example in detail. 
Consider a family of  gradient systems $(\R,\calE,\calR_\e)$, indexed by $\e>0$, where $\calE$ is some smooth $\e$-independent function, and 
\[
\calR_\e(q,v) \coloneqq \frac12 \mu\Bigl(q,\frac q\e\Bigr) \, v^2.
\]
Here $\mu\in \rmC^0(\R^2)$ is positive and $1$-periodic in the second variable. For this `wiggly
dissipation' system the gradient-flow equation takes the form
\begin{equation}
\label{eq:GF-ex1-1}
\mu\Bigl(q,\frac q\e\Bigr)\, \dot q = -\rmD\calE(q).
\end{equation}
An example of a solution is given in
Figure~\ref{fig:ex-wiggly-dissipation}.
%
%
\begin{figure}[ht]
\centering
\begin{tikzpicture}[scale=3.0]
\draw[thick, ->] (-.2,0) --(2.2,0) node[below]{$t$};
\draw[thick, ->] (0,-.2) --(0,1.2) node[left]{$q$};
\draw (0.0,1) -- (-0.06,1) node[left]{$1.0$};
\draw (1,0) -- (1,-0.06) node[below]{$1.0$};
\draw (2,0) -- (2,-0.06) node[below]{$2.0$};

\draw[violet, thick, domain=0:2.15] plot (\x,{1.01*exp(-\x)});
\node[violet, below] at (0.8,0.4) {$q_0(t) $};

\draw[blue, very thick, domain=0:2, samples=200 ] 
  plot( {\x/(2.17-0.45*\x-0.068*\x^2)},
{(1-\x/2.3 +0.1*cos(20*\x r) / sqrt(50+cos(20*\x r)^2)) })
  node[above]{$ q_\eps(t)$};

\end{tikzpicture}
\caption{A simulation of the solution $q_\eps$ (blue) and the limit
  solution $q_0(t)=\ee^{-t}$ (violet) for the system
  $(1{+}0.8\cos(2\pi \,q/\eps))\,\dot q = -q$ with $q(0)=1$ and $\eps
  = 0.2$.  The solution has regions of slow and of fast decay
  depending on the size of $1{+}0.8 \cos(2\pi \,q/\eps) \in [0.2,1.8]$. }
\label{fig:ex-wiggly-dissipation}
\end{figure}
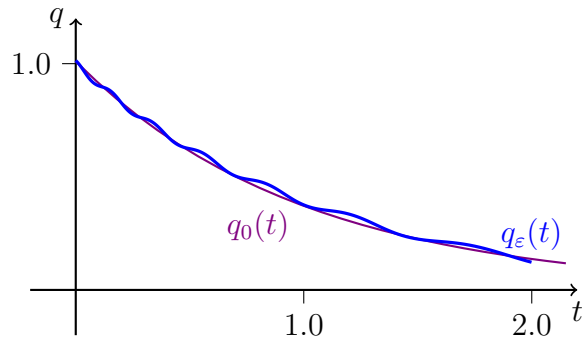

We show in Section~\ref{se:WigglyDiss} that for $\eps \to 0$, the
solutions $q_\e$ of~\eqref{eq:GF-ex1-1} converge to limit
functions~$q_0$ that solve the limiting equation 
\begin{equation}
\label{eq:wiggly-limit-intro}
\widebar\mu(q) \, \dot q = -\rmD\calE(q) \qquad \text{with} \quad \widebar\mu(q) =
\int_0^1 \mu(q, y)\dd y.
\end{equation}
In fact, $(\R,\calE,\calR_\eps)$ converges in the \emph{simple EDP sense} (defined in Section~\ref{subsec:EDP convergence}) to a
limiting system $(\R,\calE,\wt\calR_0)$, where
\begin{subequations}
\label{def:R0-M0-intro}
\begin{equation}
\label{def:R0-M0-intro-1}
\wt\calR_0(q,v) \coloneqq \calM_0(q,v,-\rmD\calE(q))  - \calM_0(q,0,-\rmD\calE(q)),
\end{equation}
and $\calM_0$ is defined via 
\begin{equation}
\label{def:M0-wiggly-dissipation}
\begin{aligned} 
\calM_0(q,v,\xi) 
=\inf\Big\{ \int_{s=0}^1 \!\!\Big( \frac{\mu(q,z(s))\big(vz'(s)\big) ^2}2 +
  \frac{\xi^2}{2\mu(q,z(s))}\Big) \dd s
    \;\Big| \qquad& \\ 
  z\colon [0,1]\to\R, \ z(1) = z(0) + \sign(v) &\Big\},
\end{aligned} 
\end{equation}
\end{subequations}
We verify explicitly in Section~\ref{se:WigglyDiss} that the system
$(\R,\calE, \wt\calR_0)$ indeed generates
equation~\eqref{eq:wiggly-limit-intro}, i.e.\ that
\[
\widebar\mu(q)\, \dot q = -\rmD\calE(q) \qquad\Longleftrightarrow \qquad
\rmD_v \wt\calR_0(q,\dot q) = -\rmD\calE(q).
\]
However, this limiting dissipation potential $\wt\calR_0$
suffers from the same problem as $\calR_2$ and $\calR_3$ above: it
depends explicitly on the energy function $\calE$, as is clear from 
\eqref{def:R0-M0-intro-1}. If one repeats the simple
EDP convergence theorem for a perturbed energy $\calE+\calF$ with 
an arbitrary tilting function $\calF$, then 
$\calF$ propagates into the formula~\eqref{def:R0-M0-intro-1} for
$\wt\calR_0$; changing the energy thus leads to a different dissipation
potential $\wt\calR_0$. As above, we consider this unsatisfactory, since
the energy driving the system is conceptually separate from the
mechanism for dissipating that energy.

In contrast, if we disregard the fact that
equation~\eqref{eq:wiggly-limit-intro} arises as a limit, and consider
it as an isolated system, then we might conjecture a gradient
structure with the effective dissipation potential $\calR_\eff(q,v)
\coloneqq \widebar\mu(q)\, v^2/2$ instead. Indeed, combined with the energy
$\calE$ this potential $ \calR_\eff$ also generates
equation~\eqref{eq:wiggly-limit-intro}; it is much simpler to
interpret than $\wt\calR_0$, and most importantly, it does not depend
on $\calE$.


\subsection{Towards a better convergence concept}
\label{subsec:intro-towards-better-concept}

These examples show that we have on one hand an unsatisfactory
convergence result, in which $(\R,\calE,\wt\calR_0)$ is proven to arise
as the unique limit of the family $(\R,\calE,\calR_\eps)$ in the
simple EDP sense, but this
limit is unsatisfactory as a description of a gradient system.

On the other hand, the alternative dissipation potential
$\calR_\eff$  generates the same limit equation and does not suffer
from the philosophical problems associated with $\wt\calR_0$. Its only drawback
is that the system $(\R, \calE, \calR_\eff)$ is not the limit of the
family $(\R,\calE,\calR_\e)$ in the simple EDP sense.

As mentioned above, these observations strongly suggest seeking
alternative convergence concepts for gradient systems, which should
generate limiting potentials that do not depend on the limiting
energy.  Specifically, we will seek convergence concepts---let us
indicate them with `$\square$'---that have the following property: if
\begin{equation}
\label{conv:tilted-intro1}
(\bfQ,\calE_\e,\calR_\e)
\overset{\square}{\longrightarrow}(\bfQ,\calE_0,\calR_0) ,
\end{equation}
then for all $\calF\in \rmC^1(\bfQ)$ we also have 
\begin{equation}
\label{conv:tilted-intro2}
(\bfQ, \, \calE_\eps {+} \calF, \, \calR_\e) 
\overset{\square}{\longrightarrow}
(\bfQ, \, \calE_0 {+} \calF, \, \calR_0), 
\end{equation}
where the dissipation potential $\calR_0$
in~\eqref{conv:tilted-intro2} is the same as
in~\eqref{conv:tilted-intro1}, and therefore does not depend on
the tilt function $\calF$.

Indeed, the two new concepts that we introduce in
Section~\ref{subsec:EDP convergence:tiltedEDPconvergence} both have
this property, and we show in Section~\ref{se:WigglyDiss} that, by
applying one of these convergence concepts, we indeed find the 
more natural dissipation potential $\calR_\eff$ rather than $\wt\calR_0$.

\subsection{The larger picture: effective kinetic relations}

Our aim of deriving `healthy' limiting gradient systems could also be
formulated as the challenge of deriving \emph{effective kinetic
  relations}.  We already introduced a \emph{kinetic relation} as a
relation between a \emph{force}~$\xi$ and a \emph{rate}~$v = \dot
q$. An important class of such kinetic relations arises 
naturally in gradient systems, since dissipation potentials $\calR$
define kinetic relations via the three equivalent relations
\eqref{eq:I.kin-rel}. 

In view of the Young-Fenchel inequality
$ \calR(q,v) + \calR^*(q,\xi) \geq \BBB \langle \xi, v \rangle \EEE $, which
holds generally for Legendre conjugate pairs $(\calR,\calR^*)$, and the third
formulation in \eqref{eq:I.kin-rel}, we define the \emph{contact set} as the
set of pairs $(v,\xi)$:
\begin{align*}
  \calC = \calC_{\calR\oplus\calR^*}(q)&\coloneqq \Bigset{(v,\xi)\in
    \bfQ\times \bfQ^*}{ \calR(q,v) + \calR^*(q,\xi) = \BBB \langle
    \xi, v \rangle \EEE }\\ 
  &= \mathop{\mathrm{graph}}\bigl( \rmD_v\calR(q,\cdot) \bigr).
\end{align*}
This set $\calC$ characterizes the pairs of rates $v$ and forces $\xi$
that are admissible to the system, and thus determine the kinetic
relation. As was already mentioned, the
equation generated by the gradient system can be viewed as the result
of applying the kinetic relation $(v,\xi)\in
\calC_{\calR\oplus\calR^*}(q)$ to a context where the force~$\xi$ is
generated by the potential $\calE$:
\begin{equation}
\label{eq:gf-in-kinetic-form}
\xi = -\rmD \calE(q)  \qquad \text{and} \qquad 
  (\dot q,\xi) \in \calC_{\calR\oplus\calR^*}(q).
\end{equation}

Kinetic relations appear throughout physics and mechanics.  Well-known
examples are Stokes' law $\xi=6\pi\eta R\, v$ for the drag force~$\xi$
on a sphere dragged through a viscous fluid (where $\eta$ is the
dynamic viscosity and $R$ the radius of the sphere), power-law viscous
relationships of the form $\xi= c \lvert v\rvert^{p-1} v$, and Coulomb friction
$\xi\in c \operatorname{Sign}(v)$, where $\operatorname{Sign}$ is the
subdifferential of the absolute value. These examples show that the
relationship may be linear or nonlinear, and single- or
multi-valued. A priori, there is no reason why a kinetic relation
should be the graph of the derivative of a dissipation potential, but
here we are interested in the ones that do have that property.  The
reasoning for the restriction of kinetic relations in form of
subdifferentials, i.e.\ $\xi=\rmD_v \calR(q,v)$, is twofold. First,
they define gradient systems and thus lead to variational
characterizations for the gradient flow (see Section \ref{su:Eqn.GS}).
Secondly, dissipation potentials arise naturally from thermodynamic
principles derived from microscopic stochastic models via
large-deviation principles; see Section \ref{se:MarkovTilting} and
\cite{ADPZ11LDPW, MiPeRe14RGFL, PeReVa14LDSH, MPPR17NETP}. Moreover,
Onsager's fundamental symmetry relation $\bbG=\bbG^*$ for the linear
kinetic relation $\xi = \bbG v$ (see \cite{Onsa31RRIP}) is equivalent
to the existence of a (quadratic) dissipation potential
$\calR(v)=\frac12\langle \bbG v,v\rangle$.
\medskip

We now turn to the challenge of deriving \emph{effective} kinetic
relations. We are given a family of kinetic relations
parametrized by $\e$. The interpretation of $\e$ as a small parameter,
or a small scale, often implies that there are natural `macroscopic,'
`averaged,' or `effective' forces and rates, which reflect the
behavior of the true forces and rates in the system at scales that are
large with respect to $\e$, while smoothing out the behavior at
smaller scales. To derive an effective kinetic relation means to
find a new relation between the limits of such macroscopic forces
and rates as $\e\to0$, leading to a characterization of the kinetic
relation for `the limiting system'.

Again, these effective kinetic relations are very common; for
instance, Stokes' law, Fourier's law, Fick's law, and many similar
laws actually are effective kinetic relations, derived from more
microscopic systems, often consisting of particles.  Throughout
science, such effective kinetic relations are the starting point for
the modeling of dissipative systems at an effective
scale~\cite{Oettinger05,Berendsen07,Mielke11a,PeletierVarMod14TR}. A
detailed understanding of the properties and assumptions that lie at
the basis of such effective kinetic relations is therefore essential.
\medskip

We now return to the question of what we mean by a
`healthy' and an `unhealthy' kinetic relation. The limiting
dissipation potential $\wt\calR_0$ in the second example above depends
on the energy $\calE$, i.e.\ $\wt\calR_0(q,v) =
\widebar\calR_{-\rmD\calE(q)}(q,v)$. It follows that the contact set
$\calC_{\widebar\calR_{-\rmD\calE}\oplus \widebar\calR_{-\rmD\calE}^*}$ also
depends on $\rmD\calE$. The gradient-flow
equation~\eqref{eq:gf-in-kinetic-form} then takes the self-referential
form
\[
(\dot q,-\rmD\calE(q))\in \calC_{\widebar\calR_{-\rmD\calE}\oplus \widebar\calR_{-\rmD\calE}^*}.
\]
The induced evolution equation is correct, since the different
occurrences of $\rmD \calE(q)$ interact nicely. However, the set
$\calC_{\calR_{-\rmD\calE}\oplus\calR_{-\rmD\calE}^*}$ does not make sense
as an independent kinetic relation, because
$\calC_{\calR_{-\rmD\calE}\oplus\calR_{-\rmD\calE}^*}$ does not provide us
with valid information about admissible pairs $(v,\xi)$
\emph{other} than for the case $\xi = -\rmD\calE(q)$. In order to find the
rate $\dot q$ for a force $\wh\xi \neq -\rmD\calE(q)$, we would
need to construct a different energy $\wh\calE(q)$ such that
$\wh\xi = -\rmD \wh\calE(q)$, repeat the convergence process
for this energy $\wh\calE$, obtain a different limiting
dissipation potential $\wh\calR_{-\rmD\wh\calE}$, and read off the
admissible rate $\dot q$ from the resulting contact set
$\calC_{\wh\calR_{-\rmD\wh\calE}\oplus \wh\calR_{-\rmD\wh\calE}^*}$.  Since
this latter set is generically different from
$\calC_{\widebar\calR_{-\rmD\calE}\oplus \widebar\calR_{-\rmD\calE}^*}$, this shows how a
single contact set $\calC_{\widebar\calR_{-\rmD\calE}\oplus \widebar\calR_{-\rmD\calE}^*}$
cannot be considered as a kinetic relation.

Instead, we seek a limiting kinetic relation that is defined as
\emph{one single} set $\calC$ of pairs $(v,\xi)$ that provides us with
all admissible combinations. The convergence concepts that we
construct below are constructed with this aim in mind.

\subsection{Third example: wiggly energy}

In the example of Section~\ref{subsec:intro-wiggly-dissipation} the
`correct' effective dissipation potential $\calR_\eff(q,v) = \widebar\mu(q)
v^2/2$ is obtained solely from information encoded in
$\calR_\eps$. When considering a family of $\Gamma$-converging
energies $\calE_\e\Gto \calE_0$, however, the `correct' limiting
dissipation potential may also contain information from
$\calE_\e$. This may seem to contradict our claim from above that the
dependence of the effective dissipation on the energy is `unhealthy'.
As we shall see below, however, `correct' or `healthy' will mean that
the effective dissipation potential $\calR_\eff$ can depend on
`microscopic details' of $\calE_\e$ but not on its `macroscopic limit'
$\calE_0$. \AAA This expectation is stimulated by the idea of deriving a proper
decomposition of `energy storage' and `dissipation mechanisms' in the
macroscopic level. \EEE 

To illustrate this we revisit the classical example of a gradient flow
in a `wiggly' energy landscape~\cite{Prandtl28, James96,
  AbeyaratneChuJames96, DoFrMi19GSWE}. Again we take as state space
$\bfQ=\R$, but now the energy $\calE_\e$ is $\e$-dependent while the
dissipation potential $\calR_\e=\calR$ does not depend on~$\e$:
\begin{equation}
\label{def:wiggly-energy-intro}
\calE_\e(q) \coloneqq \calE_0(q) + \e A(q) \cos\big(\tfrac1\e q\big) , 
\qquad 
\calR(v) \coloneqq \frac{\varrho(q)}2\, v^2,
\end{equation}
where $\calE_0\colon\R\to\R$ is smooth and $\varrho\colon\R\to \R$ and $
A\colon\R\to\R$ are smooth and positive. The induced
gradient-flow evolution equation is
\[
 \AAA \varrho(q) \EEE \, \dot q = -\rmD \calE_0(q) 
 -\eps A'(q) \cos\big(\tfrac1\e q\big)  + A(q) \sin\big(\tfrac1\e q\big).
\]

In Section~\ref{se:DFM} we summarize the results
of~\cite{DoFrMi19GSWE} and place them in the context of this
paper. We find that the system $(\R,\calE_\e,\calR)$ converges in the
\emph{simple EDP sense} to a limiting system $(\R,\calE_0,
\wt\calR_0)$, where $\calE_0$ is the $\e$-independent part of
$\calE_\e$ as in~\eqref{def:wiggly-energy-intro}, and $\wt\calR_0$ is
given by
\[
\wt\calR_0(q,v) = \calM_0(q,v,-\rmD\calE_0(q)) - \calM_0(q,0,-\rmD\calE_0(q)) ,
\]
where this time the function $\calM_0$ is given by 
\begin{equation}
\label{def:M0-wiggly-energy}
\begin{aligned} \calM_0(q,v,\xi) = \inf\Big\{\: \int_0^1
    \Bigl[\frac{\varrho(q)}2\, v^2 \dot z^2(s) &+ 
  \frac{\bigl(\xi {+}A\sin(z(s))\bigr)^2}{2 \varrho(q)} \Bigr] \dd
    s \:\; \Big| \\
& z\colon[0,1]\to\R ,\ z(1) = z(0) + \sign(v)\; \Big\}.
\end{aligned}
\end{equation}
As in the previous example, $\wt\calR_0$ again depends on
$\rmD\calE_0(q)$.  In Section~\ref{se:DFM} we also show that in 
the sense of one of the two new convergence concepts, namely
\emph{contact EDP convergence with tilting}, the family
$(\R,\calE_\eps, \calR)$ converges to a limiting system $(\R,\calE_0,
\calR_\eff)$. Now, the
effective dissipation potential $\calR_\eff$ can be characterized
explicitly via
\[
\calR_\eff(q,v)= \int_0^{\lvert v\rvert} \sqrt{A(q)^2{+}(\varrho(q) w)^2} \, \dd w .
\]
We see that $\calR_\eff$ is independent of $\calE_0$ but it depends on
$A$, which is microscopic information contained in the family
$\calE_\eps$. Moreover, the quadratic structure of $v\mapsto
\calR(q,v)=\varrho(q)v^2/2$ is lost, because $\calR_\eff(q,v)=\lvert A(q)v\rvert
+ O(\lvert v\rvert^3)$ for $v\to 0$. 

\subsection{Tilt-EDP  and \contactEDP}

The reason why gradient-flow convergence does not necessarily lead to
a `healthy' kinetic relation is \emph{relaxation:} for a given
macroscopic rate $v$ and force $\xi$, the limiting dissipation
potential is found by a minimization over microscopic degrees of
freedom constrained to the macroscopic imposed rate. This can be
recognized in the definitions of $\calM_0$
in~\eqref{def:M0-wiggly-dissipation} and~\eqref{def:M0-wiggly-energy},
and is very similar to the \emph{cell problems} that arise in
homogenization~\cite{Hornung97, CioranescuDonato99, Brai02GCB}. In the
cases of this paper, the solutions of these cell problems may not be
of gradient-flow type, leading to a situation where the limit problem
does not describe a gradient-flow structure. We analyze this in more
detail in Section~\ref{sec:understanding}.

To correct this, we introduce two novel aspects. The first is to
consider not a single family $(\bfQ,\calE_\e,\calR_\e)$ of gradient
systems, but a full class of perturbed versions of this
family.  We perturb the given energies
$\calE_\e$ by arbitrary functions $\calF \in \rmC^1(\bfQ)$: 
\[
\calE_\e^\calF \coloneqq \calE_\e + \calF.
\]
We call such a  perturbation a `tilt', and will then require convergence
of all tilted systems simultaneously. The freedom to choose arbitrary
tilts $\calF$ allows us to probe the whole
space of rates $v$ and forces $\xi$ for each $q$.

This setup leads to a first new convergence concept, which we call
\emph{EDP convergence with tilting}, or shortly \emph{tilt-EDP
convergence}. Unfortunately, it may suffer from the same
problems of relaxation, and therefore it is a rather restrictive
concept that is too strong to cover the simple cases of wiggly
dissipation and wiggly energy discussed above. 

The second new aspect is to weaken the definition of
tilt-EDP convergence to require only a reduced connection between the
relaxed problem and the limiting dissipation potential---a connection
that only holds `at the contact set $\calC$'. This leads to the concept
of \emph{contact-EDP convergence with tilting}, or shortly
\emph{\contactEDP}. We show in the examples
later in this paper that the concept of contact-EDP convergence  for
gradient systems yields kinetic relations that do not suffer from the
force dependence that we observed above for simple EDP
convergence. 

\subsection{Setup of the paper}

In Section~\ref{sec:GS-convergence} we define gradient systems and
gradient flows, recall the existing concept of simple EDP convergence,
and introduce the two novel convergence concepts \emph{tilt-EDP
  convergence} and \emph{contact-EDP convergence}. These
notions were already introduced in \cite{DoFrMi19GSWE}, but called
`strict EDP convergence' and `relaxed EDP convergence',
respectively. In Section~\ref{se:WigglyDiss} and \ref{se:DFM} we
study in detail the examples of a wiggly dissipation potential and a
wiggly energy, respectively, that were briefly mentioned
above. In Section~\ref{sec:understanding} we discuss in depth the
reasons why the concept of \contactEDP\ is an improvement over the
classical concept of EDP convergence, and why it corrects the
`incorrect' kinetic relationship that we mentioned above. 

In Section~\ref{se:MarkovTilting} we connect the tilting of
energies as described above with tilting of random variables in
large-deviation principles, and show how the independence of the
dissipation potential from the force arises naturally in that context.

In Section~\ref{se:Membrane} we present a result on tilt-EDP
convergence that was formally derived in \cite{LMPR17MOGG} and is
rigorously treated in \cite{FreMie19?DKRF}. It concerns diffusion
through a membrane in the limit of vanishing thickness and shows that
even in the case of tilt-EDP convergence we can start with quadratic
dissipation potentials $\calR_\eps(q,\cdot)$, i.e.\ linear kinetic
relations, and end up with a non-quadratic 
effective dissipation potential, i.e.\ a nonlinear effective kinetic
relation.  

\section{Gradient systems and convergence}
\label{sec:GS-convergence}

While the introduction was written in a informal style, from
now on we aim for rigor. 

\subsection{Basic definitions}

The context for this paper is a smooth finite-dimensional Riemannian
manifold $\bfQ$, which may be compact or not. A common choice is $\bfQ
= \R^n$. We write $\lvert\cdot\rvert$ for the local norms on the tangent and
cotangent spaces $\rmT\bfQ$ and $\rmT^*\bfQ$, and \BBB  $\rmT\bfQ\oplus
\rmT^*\bfQ$ for their direct (Whitney) sum \EEE
\[
 \BBB \rmT\bfQ\oplus \rmT^*\bfQ \EEE \coloneqq \bigset{(q,v,\xi)}
   {q\in \bfQ,\ v\in \rmT_q\bfQ, \ \xi\in \rmT_q^*\bfQ}.
\]

\begin{definition}[Gradient systems and dissipation potentials]
\label{def:GS.DissPot}
In this paper a \emph{gradient system} is a triple $(\bfQ,\calE,\calR)$:
\begin{itemize}
\item $\bfQ$ is a smooth finite-dimensional Riemannian manifold.
\item $\calE:\bfQ\to\R$ is a continuously differentiable functional,
  often called the `energy'.
\item $\calR\colon \rmT\bfQ\to \AAA [0, \infty] \EEE $ is a 
 \emph{dissipation potential}, which means that for each $q\in \bfQ$,
\begin{itemize}
\item $\calR(q,\cdot)\colon \rmT_q\bfQ\to[0,\infty]$ is convex and
  lower semicontinuous,
\item $\calR(q,0) = \min_{v\in \rmT_q\bfQ} \calR(q,v) = 0$.
\end{itemize}
\end{itemize}
\end{definition}

The dissipation potential has a natural Legendre-Fenchel dual
$\calR^*\colon\rmT^*\bfQ\to \AAA [0, \infty] \EEE $,
\begin{equation}
\label{def:R*}
\calR^*(q,\xi) \coloneqq \sup_{v\in \rmT_q\bfQ} \langle \xi, v \rangle -\calR(q,v).
\end{equation}
By our assumptions on $\calR$, the dual potential $\calR^*$ is also
convex, lower semicontinuous, non-negative, and satisfies
$\calR^*(q,0)=0$. We denote the (convex) subdifferentials of
$\calR$ and $\calR^*$ with respect to their second arguments as
$\partial_v\calR$ and $\partial_\xi\calR^*$. 

The following lemma gives a well-known connection between growth and
subdifferentials:
\begin{lemma}
\label{l:superlinearity-dp}
Let $\calR\colon\rmT\bfQ\to[0,\infty]$ be a dissipation potential with dual
dissipation potential $\calR^*$. For each $q\in \bfQ$, the following
are equivalent:
\begin{enumerate}
\item The map $v\mapsto \calR(q,v)$ is superlinear, i.e.\
  $\lim_{\lvert v\rvert\to\infty} \lvert v\rvert^{-1}\calR(q,v) = +\infty$;
\item For each $\xi\in \rmT_q^*\bfQ$, the subdifferential
  $\partial_\xi\calR^*(q,\xi)$ is non-empty.
\end{enumerate}
\end{lemma}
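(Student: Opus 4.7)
The plan is to work at a fixed $q\in\bfQ$, abbreviate $r(\cdot):=\calR(q,\cdot)$ and $r^*(\cdot):=\calR^*(q,\cdot)$, and reduce the equivalence to a classical fact from finite-dimensional convex analysis: the proper, convex, lower semicontinuous function $r$ on the finite-dimensional space $V:=\rmT_q\bfQ$ is superlinear if and only if its Legendre--Fenchel conjugate $r^*$ is finite everywhere on $V^*=\rmT_q^*\bfQ$.

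For the direction (1)$\Rightarrow$(2), I would argue as follows. Assuming $r$ is superlinear, fix any $\xi\in V^*$ and pick $a>\lvert\xi\rvert$; superlinearity produces a constant $C_a$ with $r(v)\geq a\lvert v\rvert-C_a$ for all $v$, so by Cauchy--Schwarz
\[
r^*(\xi)=\sup_{v\in V}\bigl(\langle\xi,v\rangle-r(v)\bigr)\leq \sup_{v\in V}\bigl((\lvert\xi\rvert-a)\lvert v\rvert+C_a\bigr)=C_a<\infty,
\]
since $\lvert\xi\rvert-a<0$ forces the sup to be attained at $v=0$. Hence $r^*$ is finite-valued on the finite-dimensional space $V^*$. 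I would then invoke the classical theorem that a convex function which is finite everywhere on a finite-dimensional vector space is automatically continuous and has nonempty subdifferential at every point, yielding $\partial_\xi\calR^*(q,\xi)\neq\emptyset$ for every $\xi$.

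For the converse (2)$\Rightarrow$(1), nonemptiness of $\partial r^*(\xi)$ already forces $r^*(\xi)<\infty$, since the subgradient inequality cannot hold at a point where $r^*$ equals $+\infty$. So $r^*$ is everywhere finite. Since $r$ is proper, convex, and lower semicontinuous, the Fenchel--Moreau theorem gives $r=r^{**}$, and for any $a>0$ and any $v\in V$,
\[
r(v)=\sup_{\xi\in V^*}\bigl(\langle\xi,v\rangle-r^*(\xi)\bigr)\geq \sup_{\lvert\xi\rvert\leq a}\langle\xi,v\rangle-\sup_{\lvert\xi\rvert\leq a}r^*(\xi)=a\lvert v\rvert-M_a,
\]
where $M_a:=\sup_{\lvert\xi\rvert\leq a}r^*(\xi)<\infty$ because a finite convex function on a finite-dimensional space is bounded on compact sets. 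Dividing by $\lvert v\rvert$ and sending $\lvert v\rvert\to\infty$ gives $\liminf_{\lvert v\rvert\to\infty}r(v)/\lvert v\rvert\geq a$, and since $a>0$ is arbitrary, $r$ is superlinear.

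The only nontrivial ingredient is the classical convex-analytic theorem that a convex function which is finite-valued on a finite-dimensional space has nonempty subdifferential at every point; everything else is bookkeeping with the Fenchel--Moreau biduality, so I do not anticipate any serious obstacle beyond citing the correct reference.
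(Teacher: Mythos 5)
Your proof is correct. The backward implication is essentially the paper's argument: both deduce that $\calR^*(q,\cdot)$ is finite everywhere and then use the Fenchel--Young lower bound $\calR(q,v)\ge\langle\xi,v\rangle-\calR^*(q,\xi)$ with arbitrarily large slopes $\xi$ to force superlinearity (the paper phrases this as a proof by contradiction with a single $\xi$ at a time, you take a supremum over the ball $\lvert\xi\rvert\le a$ and get a clean $\liminf$ bound; also note you do not actually need Fenchel--Moreau here, since the inequality $r\ge r^{**}\ge\langle\xi,\cdot\rangle-r^*(\xi)$ you use is immediate from the definition of $r^*$). The forward implication is where you genuinely diverge: the paper observes that superlinearity makes the supremum in the definition of $\calR^*(q,\xi)$ attained, and the maximizer is then directly a subgradient of $\calR^*(q,\cdot)$ at $\xi$ (equality in Fenchel--Young), whereas you first prove $\calR^*(q,\cdot)$ is finite-valued and then cite the classical theorem that a finite convex function on a finite-dimensional space is subdifferentiable everywhere. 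Both routes are sound; the paper's is more self-contained and exhibits the subgradient explicitly, while yours outsources the subdifferentiability to a standard reference (e.g.\ Rockafellar, Thm.~23.4) at the cost of not identifying which element lies in $\partial_\xi\calR^*(q,\xi)$ --- a piece of information the paper implicitly reuses later when it writes $v_\xi\in\partial\calR^*(\xi)$ in the proof of Lemma~\ref{le:calR.unique}.
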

\begin{proof}
  To show the forward implication, note that the superlinearity implies that
  for every $\xi$ the supremum in~\eqref{def:R*} is achieved, and therefore the
  subdifferential is not empty. For the opposite implication, note that for all
  $\xi$, $\calR^*(q,\xi)$ is finite, and therefore the right-hand side in the
  inequality $\calR(q,v)\geq \BBB \langle \xi, v \rangle \EEE - \calR^*(q,\xi)$
  grows linearly at infinity with rate $\xi$. By arguing by contradiction one
  finds that $\calR(q,\cdot)$ is superlinear.
\end{proof}

\begin{remark}
  The finite-dimensionality and smoothness assumptions that we make
  are of course stronger than necessary for the definition of gradient
  systems~\cite{AmGiSa05GFMS}. We make these assumptions
  nonetheless to prevent technical issues from distracting from the
  structure of the development. We expect, however, that many of these
  assumptions can be relaxed while preserving the philosophy of the
  paper.
\end{remark}

\subsection{The gradient-flow equation defined by a gradient system}
\label{su:Eqn.GS}

The \CCC gradient-flow equation \EEE induced by the gradient system is, in
three equivalent forms,
\begin{subequations}
\label{gf-def}
\begin{align}
& \dot{q} \in  \partial_\xi\mathcal{R}^*(q,-\mathrm{D}\mathcal{E}(q)), \label{gf-def:a}\\
& 0 \in \partial_v\mathcal{R}(q,\dot{q})+\mathrm{D}\mathcal{E}(q), \label{gf-def:b}\\
&\mathcal{R}(q,\dot{q}) + \mathcal{R}^*(q,-\mathrm{D} \mathcal{E}(q)) = \langle -\mathrm{D}\mathcal{E}(q), \dot{q} \rangle. \label{gf-def:EDP-diff}
\end{align}
\end{subequations}
The final line can be used to generate an additional formulation. For
absolutely continuous curves $q\colon[0,T]\to \bfQ$, in short $q\in
\mathrm{AC}([0,T],\bfQ)$, define the \emph{dissipation functional} as
\begin{equation}
  \label{eq:def.mfD}
  \mfD^T(q) \coloneqq \int_0^T \big( \calR(q,\dot
q)+\calR^*(q,{-}\rmD\calE(q)\big) \dd t.
\end{equation}
%
By integrating the Young-Fenchel \AAA inequality 
\begin{equation}
  \label{eq:FenchYoung}
 \calR(q,\dot q) +
\calR^*(q, \xi)  \geq \langle \xi , \dot q\rangle 
\end{equation}
with $\xi = - \rmD\calE(q)$ and using the chain rule \EEE we find 

\begin{lemma}[\BBB Upper energy estimate]
\label{l:chain_rule}
Under the assumptions of this section, 
\begin{equation}
\label{ineq:chain-rule}
\calE(q(T)) + \mfD^T(q) \geq \calE(q(0))
\qquad \text{for any $q\in \mathrm{AC}([0,T],\bfQ)$. }
\end{equation}
\end{lemma}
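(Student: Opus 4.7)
The plan is to combine the pointwise Young-Fenchel inequality with the chain rule for $\calE$ along the curve, exactly as hinted in the text preceding the lemma. First I would note that since $\calE\in\rmC^1(\bfQ)$ and $q\in \mathrm{AC}([0,T],\bfQ)$, the composition $t\mapsto \calE(q(t))$ is absolutely continuous on $[0,T]$, and the chain rule gives
\begin{equation*}
  \frac{\dd}{\dd t}\calE(q(t)) = \langle \rmD\calE(q(t)), \dot q(t)\rangle \qquad \text{for a.e. } t\in[0,T].
\end{equation*}
On a finite-dimensional Riemannian manifold this is justified by localizing: cover $q([0,T])$ by a finite (by compactness of the image) collection of coordinate charts, and apply the classical chain rule for absolutely continuous curves in Euclidean space in each chart. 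Integrating over $[0,T]$ yields
\begin{equation*}
  \calE(q(T)) - \calE(q(0)) = \int_0^T \langle \rmD\calE(q(t)), \dot q(t) \rangle \dd t.
\end{equation*}

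Second, I would apply the Young-Fenchel inequality \eqref{eq:FenchYoung} pointwise with the choice $\xi = -\rmD\calE(q(t))$ and $v = \dot q(t)$. This is legitimate because $\calR^*$ is defined precisely as the Legendre-Fenchel dual of $\calR(q,\cdot)$ in \eqref{def:R*}, so for a.e.\ $t$,
\begin{equation*}
  \langle -\rmD\calE(q(t)), \dot q(t)\rangle \;\leq\; \calR(q(t),\dot q(t)) + \calR^*(q(t),-\rmD\calE(q(t))).
\end{equation*}
Integrating this over $[0,T]$ and comparing with the chain-rule identity gives
\begin{equation*}
  \calE(q(0)) - \calE(q(T)) = \int_0^T \langle -\rmD\calE(q(t)), \dot q(t)\rangle \dd t \;\leq\; \mfD^T(q),
\end{equation*}
which rearranges to the claimed inequality $\calE(q(T)) + \mfD^T(q) \geq \calE(q(0))$.

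The main obstacle is the careful justification of the chain rule for the composition $\calE\circ q$ with $q$ only absolutely continuous; everything else is immediate from convex duality. A minor book-keeping point is that the integrands $\calR(q,\dot q)$ and $\calR^*(q,-\rmD\calE(q))$ may a priori take the value $+\infty$, but since both are non-negative and measurable (by lower semicontinuity of $\calR(q,\cdot)$ and $\calR^*(q,\cdot)$ together with measurability of $t\mapsto (q(t),\dot q(t))$), the integral $\mfD^T(q)$ is well-defined in $[0,\infty]$; if it equals $+\infty$ the inequality is trivial, and otherwise both integrands are finite a.e.\ and the above argument applies.
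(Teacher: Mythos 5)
Your proof is correct and follows exactly the route the paper indicates in the sentence preceding the lemma: apply the Young--Fenchel inequality \eqref{eq:FenchYoung} pointwise with $\xi=-\rmD\calE(q(t))$, combine with the chain rule for $\calE\circ q$, and integrate. The extra care you take with the chain rule for absolutely continuous curves and with the possible value $+\infty$ of the integrands is sound and only makes explicit what the paper leaves implicit.
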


On the other hand, by integrating~\eqref{gf-def:EDP-diff} in time we
find that solutions $q$ of~\eqref{gf-def} achieve equality
in~\eqref{ineq:chain-rule}. This leads to a further characterization
of solutions; see \cite{AmGiSa05GFMS} or \cite[Thm.\,3.1]{MieSte19?CGED}:  

\begin{theorem}[Energy-Dissipation Principle]
\label{thm:EDP-princ}
Let $q\in \mathrm{AC}([0,T];\bfQ)$. The following are equivalent:
\begin{enumerate}
\item For almost all $t\in [0,T]$, $q$ satisfies any of the three
  characterizations~\eqref{gf-def};
\item The curve $q$ satisfies
\begin{equation}
\label{gf-def:EDP}
\calE(q(T)) + \mfD^T(q) \leq \calE(q(0)).
\end{equation}
\end{enumerate}
\end{theorem}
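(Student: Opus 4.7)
The plan is to establish the equivalence by showing that both statements amount to equality in the chain of inequalities produced by the Fenchel–Young inequality and the classical chain rule for $\calE$ along absolutely continuous curves. The three pointwise characterisations \eqref{gf-def:a}–\eqref{gf-def:EDP-diff} are equivalent at each $(q,v)$ by standard convex-analytic duality (the Fenchel equivalence theorem applied to the conjugate pair $\calR(q,\cdot)$ and $\calR^*(q,\cdot)$), so it suffices to work with the scalar identity~\eqref{gf-def:EDP-diff}.

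For the forward direction, assume $q$ satisfies one, hence all, of \eqref{gf-def} for a.e.\ $t\in[0,T]$. Pick the form \eqref{gf-def:EDP-diff}: pointwise a.e.,
\begin{equation*}
\calR(q,\dot q) + \calR^*(q,-\rmD\calE(q)) = \langle -\rmD\calE(q),\dot q\rangle.
\end{equation*}
Since $q\in\mathrm{AC}([0,T],\bfQ)$ and $\calE\in\rmC^1(\bfQ)$, the classical chain rule applies and $t\mapsto \calE(q(t))$ is absolutely continuous with derivative $\langle \rmD\calE(q),\dot q\rangle$ a.e. Integrating the displayed identity on $[0,T]$ therefore gives
\begin{equation*}
\mfD^T(q) = \int_0^T \langle -\rmD\calE(q),\dot q\rangle\,\dd t = \calE(q(0)) - \calE(q(T)),
\end{equation*}
which is even equality in~\eqref{gf-def:EDP}, and hence \eqref{gf-def:EDP} holds.

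For the reverse direction, assume \eqref{gf-def:EDP}. Combined with the upper energy estimate of Lemma~\ref{l:chain_rule}, which provides the opposite inequality for every AC curve, we obtain
\begin{equation*}
\calE(q(T)) + \mfD^T(q) = \calE(q(0)).
\end{equation*}
Using the chain rule again to rewrite the right–hand side as $\calE(q(T)) + \int_0^T \langle -\rmD\calE(q),\dot q\rangle\,\dd t$, this equality reads
\begin{equation*}
\int_0^T \Bigl[ \calR(q,\dot q) + \calR^*(q,-\rmD\calE(q)) - \langle -\rmD\calE(q),\dot q\rangle \Bigr]\,\dd t = 0.
\end{equation*}
The Fenchel–Young inequality \eqref{eq:FenchYoung} guarantees that the integrand is nonnegative, so the vanishing of the integral forces it to vanish a.e.\ in $[0,T]$. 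This is precisely~\eqref{gf-def:EDP-diff} pointwise a.e., which by the Fenchel equivalence recalled above is equivalent to \eqref{gf-def:a} and \eqref{gf-def:b}.

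The main subtlety, and essentially the only one, is the validity of the chain rule for $\calE\circ q$ and the measurability/integrability of each piece of the integrand. Because $\bfQ$ is a finite-dimensional smooth manifold and $\calE$ is $\rmC^1$, the chain rule along AC curves is classical, and measurability of the compositions $\calR(q,\dot q)$ and $\calR^*(q,-\rmD\calE(q))$ follows from lower semicontinuity of $\calR,\calR^*$ in the second variable together with continuity in the first. Integrability of both summands is automatic under \eqref{gf-def:EDP}, since the sum $\mfD^T(q)$ is then finite and each summand is nonnegative. Thus no additional growth or coercivity assumption on $\calR$ is needed beyond Definition~\ref{def:GS.DissPot}.
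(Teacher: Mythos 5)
Your proof is correct and follows exactly the route the paper indicates: integrate the pointwise identity \eqref{gf-def:EDP-diff} via the chain rule for the forward direction, and for the converse combine \eqref{gf-def:EDP} with the upper energy estimate of Lemma~\ref{l:chain_rule} to force the nonnegative Fenchel--Young defect to vanish almost everywhere. The paper delegates the details to the literature, but your write-up, including the remarks on measurability and integrability, is a complete and accurate rendering of that standard argument.
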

%
%

\begin{remark}
\label{r:nature-GFs}
\AAA The assumption that $v\mapsto \calR(q,v)$ is minimized at $v=0$ \CCC and
equals $0$ there \AAA defines the intrinsic properties of `dissipation'. To
understand this, note that, by formulation \eqref{gf-def:b}, the
dissipation of energy at rate $\dot q$ is given by
$\langle \pl_v \calR(q,\dot q), \dot q\rangle$.

Clearly, `not moving implies that there is no dissipation of energy', but even
further there is no dissipative force, i.e.\
$\dot{q}=0 \Longrightarrow 0 = \pl_v \calR(q, 0)$ in the differentiable
case (or $0 \in \pl_v \calR(q,0)$ in the general case). When, additionally, $v=0$
is the unique minimizer, we also have that `moving requires dissipation', i.e.\
$\dot{q} \neq 0$ implies
$\langle \pl_v \calR(q,\dot q), \dot q\rangle \geq \calR(q,\dot q)>0$, where we
used convexity of $\calR(q,\cdot)$ for the last ``$\geq$''. \EEE
%
\smallskip

As mentioned in the Introduction, a gradient system
$(\bfQ,\calE,\calR)$ can be considered to define a kinetic relation,
at each $q\in \bfQ$, through the \emph{contact set}
\[
\calC_{\calR\oplus\calR^*}(q) \coloneqq \Bigl\{(v,\xi)\in \rmT_q\bfQ\times
\rmT_q\bfQ^*: \calR(q,v) + \calR^*(q,\xi) = \BBB \langle
\xi, v \rangle \EEE \Bigr\}.
\]
The same `nature' of a gradient flow can be recognized as the property
that the kinetic relation is \emph{dissipative}, i.e. that $ \BBB \langle
\xi, v \rangle \EEE \geq0$ for all $(v,\xi)\in\calC_{\calR\oplus\calR^*}(q)$.
This follows immediately from the property that both $\calR$ and
$\calR^*$ are non-negative, which itself is a consequence of the
minimality of $v=0$.
\end{remark}

\subsection{Simple EDP convergence}
\label{subsec:EDP convergence}

The Energy-Dissipation Principle formulation~\eqref{gf-def:EDP} of a
gradient flow leads to a natural concept of gradient-system
convergence. A first version of this concept was formulated by Sandier
and Serfaty~\cite{SanSer04GCGF} and generalizations have been used
in a large number of proofs (see
e.g.~\cite{Serf11GCGF, Mielke12, ArnrichMielkePeletierSavareVeneroni12,
  MiPeRe14RGFL, Mielke16, Mielke16a,LMPR17MOGG}).

\begin{definition}[Simple EDP convergence]
\label{d:EDP conv-1}
A family of gradient systems $(\bfQ,\calE_\e,\calR_\e)$ converges
in the \emph{simple EDP sense} to a gradient system
$(\bfQ,\calE_0,\wt\calR_0)$, shortly $(\bfQ,\calE_\e,\calR_\e) \EDPto
(\bfQ,\calE_0,\wt\calR_0)$, if the following two conditions hold: 
\begin{enumerate}
\item \label{def:EDPconv:G-convE}
$\calE_\e\Glongto \calE_0$ in $\bfQ$;
\item \label{def:EDPconv:D-e} For each $T>0$ the functional
  $\mfD_\e^T$ $\Gamma$-converges in $\rmC([0,T];\bfQ)$ to the limit
  functional
\begin{equation}
\label{def:EDPconv:D0-is-RR*-v2}
\mfD_0^T\colon \: q \:\mapsto \:\int_0^T\bigl[\wt\calR_0(q,\dot q) 
  +\wt\calR_0^*(q,-\rmD\calE_0(q))\bigr] \dd t.
\end{equation}
\end{enumerate}
\end{definition}

The two parts of Definition~\ref{d:EDP conv-1} naturally combine to
enable passing to the limit in the integrated
formulation~\eqref{gf-def:EDP}, as illustrated by the proof of the following 
lemma.

\begin{lemma}[Simple EDP convergence implies convergence of solutions] 
\label{le:EDPimpliesCvgSol}
Assume that \newline $(\bfQ,\calE_\e,\calR_\e)\EDPto
(\bfQ,\calE_0,\wt\calR_0)$. Let $q_\e\in \mathrm{AC}([0,T],\bfQ)$ be
solutions of $(\bfQ,\calE_\e,\calR_\e)$, and assume the convergences
\[
q_\e \to q_0 \text{ in } \rmC([0,T],\bfQ) \qquad\text{and}\qquad
\calE_\e(q_\e(0))\to \calE_0(q_0(0)).
\]
Then $q_0$ is a solution of $(\bfQ,\calE_0,\wt\calR_0)$.
\end{lemma}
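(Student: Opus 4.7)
The plan is to invoke the Energy-Dissipation Principle (Theorem~\ref{thm:EDP-princ}) at both ends: use it on each $q_\e$ to obtain the scalar inequality $\calE_\e(q_\e(T)) + \mfD_\e^T(q_\e)\leq \calE_\e(q_\e(0))$, pass to the limit $\e\to 0$ in each of the three terms using the two ingredients of simple EDP convergence, and then invoke the reverse implication of Theorem~\ref{thm:EDP-princ} on $q_0$.

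\textbf{Step 1 (starting point).} Since each $q_\e$ solves the gradient flow for $(\bfQ,\calE_\e,\calR_\e)$, Theorem~\ref{thm:EDP-princ} yields
\begin{equation*}
\calE_\e(q_\e(T)) + \mfD_\e^T(q_\e) \leq \calE_\e(q_\e(0)).
\end{equation*}
(In fact equality holds, but the inequality is all we need.)

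\textbf{Step 2 (passing to the limit on the right-hand side).} By the hypothesis on initial data, $\calE_\e(q_\e(0))\to \calE_0(q_0(0))$, so the right-hand side converges.

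\textbf{Step 3 (liminf on the left-hand side).} The uniform convergence $q_\e\to q_0$ in $\rmC([0,T];\bfQ)$ implies pointwise convergence $q_\e(T)\to q_0(T)$. The $\Gamma$-convergence $\calE_\e\Gto\calE_0$ then gives the liminf bound
\begin{equation*}
\liminf_{\e\to 0}\calE_\e(q_\e(T)) \;\geq\; \calE_0(q_0(T)).
\end{equation*}
Simultaneously, the $\Gamma$-convergence $\mfD_\e^T\Gto \mfD_0^T$ in $\rmC([0,T];\bfQ)$, applied along the converging sequence $q_\e\to q_0$, yields
\begin{equation*}
\liminf_{\e\to 0}\mfD_\e^T(q_\e) \;\geq\; \mfD_0^T(q_0) \;=\; \int_0^T\!\bigl[\wt\calR_0(q_0,\dot q_0) + \wt\calR_0^*(q_0,-\rmD\calE_0(q_0))\bigr]\dd t.
\end{equation*}
Combining these two liminf inequalities (both terms on the left are bounded below, so the liminf of the sum exceeds the sum of the liminfs) and using Step~2 gives
\begin{equation*}
\calE_0(q_0(T)) + \mfD_0^T(q_0) \;\leq\; \calE_0(q_0(0)).
\end{equation*}

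\textbf{Step 4 (closing the argument).} The finiteness of $\mfD_0^T(q_0)$ forces $\wt\calR_0(q_0,\dot q_0)\in L^1(0,T)$, and via Lemma~\ref{l:superlinearity-dp} (assuming enough coercivity of $\wt\calR_0$) yields $q_0\in \mathrm{AC}([0,T];\bfQ)$; once in this class, the reverse implication in Theorem~\ref{thm:EDP-princ} turns the integrated inequality into the statement that $q_0$ satisfies the limit gradient-flow equation~\eqref{gf-def}.

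\textbf{Main obstacle.} The nontrivial point is the splitting of the liminf of a sum into a sum of liminfs: this is only legitimate because both $\calE_\e(q_\e(T))$ is bounded below along the sequence (at worst by $\Gamma$-$\liminf$) and $\mfD_\e^T\geq 0$. Equally, the identification $q_0\in \mathrm{AC}([0,T];\bfQ)$ from finite dissipation is the place where one implicitly uses structural properties of the limiting $\wt\calR_0$ (such as superlinearity or at least properness), without which Theorem~\ref{thm:EDP-princ} cannot be applied to $q_0$. In the present finite-dimensional setting with the standing assumptions on $\calR$, both steps go through routinely.
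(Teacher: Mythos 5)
Your proposal is correct and follows essentially the same route as the paper's own proof: apply the Energy-Dissipation Principle to each $q_\e$, pass to the limit using the two $\Gamma$-liminf inequalities together with the convergence of initial energies, and conclude via Theorem~\ref{thm:EDP-princ}. Your additional remarks on splitting the liminf of the sum (legitimate since $\mfD_\e^T\geq 0$) and on $q_0\in\mathrm{AC}([0,T];\bfQ)$ are slightly more explicit than the paper, which leaves these points implicit, but the argument is the same.
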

\begin{proof}
  From parts~\ref{def:EDPconv:G-convE} and~\ref{def:EDPconv:D-e} of
  Definition~\ref{d:EDP conv-1} we find that
\[
\calE_0(q_0(T)) + \mfD_0^T(q_0) - \calE_0(q_0(0))
\leq \liminf_{\e\to0} \calE_{\e}(q_{\e}(T)) + 
\mfD_{\e}^T(q_{\e}) - \calE_{\e}(q_{\e}(0)) = 0.
\]
By Theorem~\ref{thm:EDP-princ} it follows that the limit $q_0$
is a solution of $(\bfQ,\calE_0,\wt\calR_0)$.
\end{proof}

In the definition of simple EDP convergence, as well as in the two
versions of EDP convergence with tilting, we ask for the full 
$\Gamma$-convergences $\calE_\eps \Glongto \calE_0$ and $\mfD^T_\eps \Glongto
\mfD^T_0$. This is needed to define the limits $\calE_0$ and
$\wt\calR_0$ in a unique way.
For studying the limiting solutions $q_0$ as in Lemma
\ref{le:EDPimpliesCvgSol} the two liminf estimates are enough;
however, our aim is to recover 
effective kinetic relations or effective dissipation potentials, which
is additional information not contained in the limit equation. 

Also in the fundamental work \cite{SanSer04GCGF,Serf11GCGF} on
evolutionary $\Gamma$-convergence for gradient flows only the liminf
estimates are imposed, because there the main focus is on the
characterization of the limit solutions.

\subsection{Tilting the gradient systems}
\label{su:TiltGS}

As we explained in the Introduction, simple EDP convergence may lead
to `unhealthy' limiting dissipation potentials, which violate the
requirement~\eqref{conv:tilted-intro1}--\eqref{conv:tilted-intro2}.
As a central step towards improving the situation, we embed the single
sequence $(\bfQ,\calE_\e,\calR_\e)$ in a family of sequences
$(\bfQ,\calE_\e {+} \calF,\calR_\e)$, parameterized by functionals
$\calF\in \rmC^1(\bfQ;\R)$, thereby `tilting' the functionals
$\calE_\e$. Tilting $\calE_\e$ does not change the
$\Gamma$-convergence properties: we have
\[
\calE_\e \stackrel{\Gamma}\longrightarrow \calE_0 \quad 
\Longleftrightarrow\quad
\calE_\e +\calF \stackrel{\Gamma}\longrightarrow \calE_0+ \calF \quad
\text{for all }\calF\in \rmC^1(\bfQ;\R). 
\] 
However, for the dissipation functional $\mfD_\eps^T$ we obtain new
and nontrivial information by considering the dissipation functional
for the tilted energy:  
\[
  \mfD_\eps^T(q,\calF)\coloneqq\int_0^T \calM_\eps(q,\dot q, 
   -\rmD\calE_\eps(q){-}\rmD\calF(q)) \dd t 
\quad   \text{with }
 \AAA \calM_\eps(q,v,\xi) \coloneqq \calR_\eps(q,v)+ 
    \calR^*_\eps(q,\xi). \EEE
\]
We now assume that the $\Gamma$-limits of $\mfD_\eps(\cdot,\calF)$
exist, i.e.
\begin{equation}
  \label{eq:calN0}
  \mfD_\eps^T(\cdot, \calF) \Glongto \mfD_0^T(\cdot,\calF)\colon q\mapsto
  \int_0^T \calN_0(q,\dot q,-\rmD\calF(q))\dd t 
  \qquad \text{for all }\calF\in \rmC^1(\bfQ;\R). 
\end{equation}
To recover the original structure of integrals $\mfD_\eps^T$ in terms
of $\calM_\eps$, we define
\[
\calM_0(q,v,\xi)\coloneqq \calN_0\big(q,v,\xi {+} \rmD \calE_0(q) \big),
\]
such that $\mfD_0^T$ has the desired form
\[
\mfD_0^T(q,\calF)= \int_0^T \calM_0 \big(q,\dot q, 
-\rmD\calE_0(q){-}\rmD\calF(q)\big) \dd t .
\]

We capture this discussion in a definition that provides the basis for
the later convergence concepts.

\begin{assumption}[Basic assumptions]
\label{ass:basic}
Assume that the family $(\bfQ,\calE_\e,\calR_\e)$ satisfies
\begin{enumerate}
\item \label{ass:EDPconv:GammaConvEe}
$\calE_\e \stackrel{\Gamma}\longrightarrow \calE_0$ in $\bfQ$;
\item \label{ass:EDPconv:D-e} For all $T>0$, there exists a functional
  $\mfD_0^T\colon \mathrm{AC}([0,T];\bfQ)\times \rmC^1(\bfQ;\R)\to [0,\infty]$
  such that, for each $\calF\in \rmC^1(\bfQ;\R)$, the sequence
  $\mfD_\e^T(\cdot,\calF)$ $\Gamma$-converges to
  $\mfD_0^T(\cdot,\calF)$ in the topology of $\rmC([0,T];\bfQ)$.
\item \label{ass:EDPconv:D-e-N} There exists a function
  $\calN_0\colon \rmT \bfQ \BBB \oplus \EEE \rmT^*\bfQ \to [0,\infty]$,
  independent of $T$, such that
\[
\forall\, \calF\in \rmC^1(\bfQ;\R): \qquad \mfD_0^T(q,\calF) = \int_0^T
\calN_0(q(t),\dot q(t),-\rmD\calF(q)) \, \dd t.
\]
For all $(q,\eta)\in \rmT^*\bfQ$, the map $v\mapsto
\calN_0(q,v,\eta)$ is convex and lower semicontinuous.
\end{enumerate}
Define $\calM_0\colon \rmT \bfQ \BBB \oplus \EEE \rmT^*\bfQ\to \R$ by 
\begin{equation}
\label{eq:calM_0}
\calM_0(q,v,\xi) \coloneqq \calN_0(q,v,\xi{+}\rmD\calE_0(q)).
\end{equation}
\begin{enumerate}[resume]
\item \label{ass:EDPconv:N-Young}
$\calM_0(q,v,\xi) \geq \BBB \langle \xi, v \rangle \EEE $ for all $(q,v,\xi)\in
 \rmT\bfQ \BBB \oplus \EEE \rmT^*\bfQ$.  
\item \label{ass:EDPconv:Nv_geq_N0}
$\calM_0(q,v,\xi) \geq \calM_0(q,0,\xi)$ for all $(q,v,\xi)\in
 \rmT\bfQ \BBB \oplus \EEE \rmT^* \bfQ$.  
\end{enumerate}
\end{assumption}

We briefly comment on
these. Assumptions~\ref{ass:EDPconv:GammaConvEe}--\ref{ass:EDPconv:D-e-N}
make the prior discussion precise. Note that $\calN_0$ is assumed to
be independent of the time horizon $T$. This is a common feature of
convergence results of this type; see e.g.~\cite[Ch.~3]{Brai02GCB}, or
the examples later in this paper, and note that this independence also
is implicitly present in condition~\eqref{def:EDPconv:D0-is-RR*-v2}
for simple EDP convergence.

Assumption~\ref{ass:EDPconv:N-Young} \AAA is the expected consequence of the
Fenchel-Young inequality \eqref{eq:FenchYoung} giving $\calM_\eps(q,v,\xi) \geq
\langle \xi,v\rangle$ for all $\eps>0$. This assumption is needed to obtain the
upper energy estimate \eqref{ineq:chain-rule} for the limit functionals $\calE_0$ and
$\mfD^T_0$ as well, namely \EEE 
\[
\calE_0(q(T)) + \mfD_0^T(q) \geq \calE_0(q(0))
\qquad \text{for all $q\in \mathrm{AC}([0,T],\bfQ)$. }
\]
Assumption~\ref{ass:EDPconv:Nv_geq_N0} is satisfied at positive $\e$,
since by the conditions on dissipation potentials we have
$\calR_\e(q,v)\geq \AAA \calR_\e(q,0)=0 \EEE $ for all $q$ and $v$, so that
\[
\calM_\e(q,v,\xi) = \calR_\e(q,v) + \calR_\e^* \AAA (q,\xi) \EEE 
\geq \calR_\e(q,0) + \calR_\e^* \AAA (q,\xi) \EEE = \calM_\e(q,0,\xi).
\]
Since the property $\calR_\e(q,v)\geq \CCC \calR_\e(q,0) = 0 \EEE $ \CCC is an
intrinsic property of gradient systems \EEE (see
Remark~\ref{r:nature-GFs}), Assumption~\ref{ass:EDPconv:Nv_geq_N0}
formulates that the limiting structure $\calM_0$ preserves this aspect
of the gradient-flow nature. If we impose a continuity requirement on
$\calN_0$, then Assumption~\ref{ass:EDPconv:Nv_geq_N0} can also be
derived through the $\Gamma$-convergence limit---we show this in the
next lemma. In the next section both
Assumptions~\ref{ass:EDPconv:N-Young} and~\ref{ass:EDPconv:Nv_geq_N0}
will be essential in recovering a dissipation-potential formulation of
$\calM_0$.

\begin{lemma}
\label{l:N0v-geq-N00}
Assume all of Assumption~\ref{ass:basic} except
part~\ref{ass:EDPconv:Nv_geq_N0}; instead, assume that $\calN_0$ is
continuous. Then, for all $(q,v,\xi)\in \rmT\bfQ \CCC \oplus \EEE \rmT^* \bfQ $
we have
\begin{equation}
\label{ineq:Nv_geq_N0}
\calN_0(q,v,\xi) \geq \calN_0(q,0,\xi) \quad\text{and}\quad
\calM_0(q,v,\xi)  \geq \calM_0(q,0,\xi).
\end{equation}
\end{lemma}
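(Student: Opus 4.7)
The plan is to combine the $\Gamma$-convergence of $\mfD_\eps^T(\cdot, \calF)$ with a \emph{time-rescaling trick} that converts the pointwise monotonicity of $\calR_\eps(q,\cdot)$ from the origin (a consequence of convexity together with $\calR_\eps(q,0)=0$) into an integral inequality between two bona fide dissipation functionals. Once this inequality has been transferred across the $\Gamma$-limit, shrinking the time horizon and using continuity of $\calN_0$ will yield the pointwise statement.

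Fix $(q_*,v_*,\xi_*)\in \rmT\bfQ\oplus \rmT^*\bfQ$; we aim to prove $\calN_0(q_*,v_*,\xi_*)\geq \calN_0(q_*,0,\xi_*)$. Working in a local chart centered at $q_*$, select a tilt $\calF\in\rmC^1(\bfQ;\R)$ with $-\rmD\calF\equiv \xi_*$ on some neighborhood $U$ of $q_*$ (in the chart, take $\calF(q)=-\langle\xi_*,q\rangle$ multiplied by a cutoff equal to $1$ near $q_*$). For small $T>0$, set $q^T(t)\coloneqq q_*+tv_*$ on $[0,T]$, which lies in $U$. Assumption~\ref{ass:EDPconv:D-e} supplies a recovery sequence $q_\eps^T\to q^T$ in $\rmC([0,T];\bfQ)$ with $\mfD_\eps^T(q_\eps^T,\calF)\to \mfD_0^T(q^T,\calF)$ (we may assume this limit is finite, else the conclusion is trivial). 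For each $\delta\in(0,1]$ introduce the slowed curve $\hat q_\eps^{T,\delta}(t)\coloneqq q_\eps^T(\delta t)$ on $[0,T/\delta]$.

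The change of variable $s=\delta t$ gives
\[
\mfD_\eps^{T/\delta}(\hat q_\eps^{T,\delta},\calF)
  = \frac{1}{\delta}\int_0^T \calM_\eps\bigl(q_\eps^T(s),\delta\dot q_\eps^T(s),-\rmD\calE_\eps(q_\eps^T(s))+\xi_*\bigr)\dd s,
\]
using $-\rmD\calF(q_\eps^T(s))=\xi_*$ for $\eps$ small enough that $q_\eps^T$ remains in $U$. Convexity of $\calR_\eps(q,\cdot)$ together with $\calR_\eps(q,0)=0$ gives $\calR_\eps(q,\delta v)\leq \delta\calR_\eps(q,v)\leq \calR_\eps(q,v)$ for $\delta\in[0,1]$, whence $\calM_\eps(q,\delta v,\xi)\leq \calM_\eps(q,v,\xi)$. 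Consequently $\mfD_\eps^{T/\delta}(\hat q_\eps^{T,\delta},\calF)\leq \delta^{-1}\mfD_\eps^T(q_\eps^T,\calF)$. Since $\hat q_\eps^{T,\delta}\to \hat q^{T,\delta}(t)\coloneqq q_*+\delta t v_*$ uniformly on $[0,T/\delta]$, the $\Gamma$-liminf on the left and the recovery convergence on the right combine to give $\mfD_0^{T/\delta}(\hat q^{T,\delta},\calF)\leq \delta^{-1}\mfD_0^T(q^T,\calF)$. Expanding both sides via the integral representation of $\mfD_0$ and rescaling $s=\delta t$ on the left reduces this to
\[
\int_0^T \calN_0(q_*+sv_*,\delta v_*,\xi_*)\dd s \;\leq\; \int_0^T \calN_0(q_*+sv_*,v_*,\xi_*)\dd s.
\]

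To conclude, let $\delta\to 0^+$: continuity of $\calN_0$ gives pointwise convergence $\calN_0(q_*+sv_*,\delta v_*,\xi_*)\to \calN_0(q_*+sv_*,0,\xi_*)$, while convexity of $\calN_0(q,\cdot,\xi)$ provides the $\delta$-uniform majorant $\max\{\calN_0(q_*+sv_*,0,\xi_*),\calN_0(q_*+sv_*,v_*,\xi_*)\}$, bounded on $[0,T]$ by continuity. Dominated convergence transfers the inequality to $\delta=0$; dividing by $T$ and sending $T\to 0^+$ then produces the pointwise bound $\calN_0(q_*,0,\xi_*)\leq \calN_0(q_*,v_*,\xi_*)$ by Lebesgue differentiation and continuity. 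The companion inequality for $\calM_0$ follows by substituting $\xi_*\mapsto \xi_*+\rmD\calE_0(q_*)$ and using the definition $\calM_0(q,v,\xi)=\calN_0(q,v,\xi+\rmD\calE_0(q))$. The main obstacle is the rescaling step itself: without it, the pointwise bound $\calR_\eps(q,0)\leq \calR_\eps(q,v)$ does not lift to an inequality between two genuine $\eps$-dependent dissipation functionals, because the naive ``freeze the velocity at $0$'' operation along a non-constant curve does not produce a new AC curve to which the $\Gamma$-convergence hypothesis can be directly applied.
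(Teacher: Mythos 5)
Your argument is correct and is essentially the paper's own proof: the same time-rescaling of a recovery sequence (your $\delta$ is the paper's $1/\lambda$), the same monotonicity $\calM_\eps(q,\delta v,\xi)\leq\calM_\eps(q,v,\xi)$ from convexity and $\calR_\eps(q,0)=0$, the same $\Gamma$-liminf/recovery sandwich, and the same limits $\delta\to0$ then $T\to0$ using continuity of $\calN_0$. The only differences are cosmetic (the explicit cutoff for $\calF$ and the dominated-convergence justification, which the paper leaves implicit).
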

\begin{proof}
Fix $q^0 \in \bfQ$. By working in local coordinates and taking
sufficiently small $T$, we can choose a curve $q_0\colon[0,T]\to\bfQ$ to
satisfy $q_0(t) = q^0 + tv$, for any $v\in \rmT_{q^0}\bfQ$. Similarly,
for sufficiently small $T$ we can choose $\calF$ such that
$-\rmD\calF$ is a constant $\xi\in \rmT^*_{q^0}\bfQ$ on the affine curve
$q_0$.

By the continuity of $\calN_0$, we obtain that 
$\mfD_0^T(q_0,\calF)$ is finite; therefore we can find a recovery
sequence $q_\e\to q_0$ for $\mfD_\e^T(\cdot,\calF)$.  We define the
time-rescaled curves $r_\e(s) \coloneqq q_\e(s/\lambda)$ for $s\in
[0,\lambda T]$, which converge in
$\mathrm{AC}([0,\lambda T],\bfQ)$ to the limit  $r_0(s) = q_0(s/\lambda)$. \AAA
For every $(q, v) \in \rmT\bfQ$ and $\lambda \geq 1$, we have
\begin{equation*}
\calR_\eps(q,\lambda v)\geq \lambda \calR_\eps(q,v) \geq \calR_\eps(q,v).
\end{equation*}
The first inequality follows from $\calR_\eps (q,0)=0$ and the convexity of
$\calR_\eps (q, \cdot)$, whence $\calR_\eps(q, tv) \leq t \calR_\eps(q, v) $ for $t \in [0,
1]$. Then we replace $t$ by $1/\lambda$ and perform the substitution
$x/\lambda \mapsto x$. \EEE
Defining $\calN_\eps(q,v,\eta) \coloneqq
\calM_\eps(q,v,\eta{-}\rmD\calE_\eps(q))$ we obtain 
$\calN_\eps(q,\lambda v,\eta) \geq \calN_\eps(q,v, \eta \BBB) \EEE $, and  
  then calculate
\begin{align*}
  &\int_0^T \!\!\calN_0\bigl(q_0(t),\dot q_0(t),-\rmD\calF(q_0 (t))\bigr)\dd
  t
  \ =  \  \lim_{\e\to0} \int_0^T  \!\!
     \calN_\e\BBB \bigl( \EEE q_\e(t),\dot q_\e(t),-\rmD\calF(q_\e(t))\bigr)\dd t\\
  &=  \lim_{\e\to0} \frac1\lambda \int_0^{\lambda T} \!\!
  \calN_\e \BBB \bigl( \EEE r_\e(s),\lambda \dot r_\e(s), 
   -\rmD\calF(r_\e(s))\bigr) \dd s 
  \\
 &\geq   \liminf_{\e\to0} \frac1\lambda \int_0^{\lambda T} \!\!
  \calN_\e \BBB \bigl( \EEE r_\e(s),\dot r_\e(s),-\rmD\calF(r_\e(s))\bigr) \dd s\\
 &\geq \frac1\lambda \int_0^{\lambda T}\!\! \calN_0\bigl(r_0(s), 
      \dot r_0(s),-\rmD\calF(r_0(s))\bigr)\dd s
  \, = \, \int_0^{T} \!\!\calN_0\bigl(q_0(t),\dot q_0(t)/\lambda
  ,-\rmD\calF(q_0(t))\bigr)\dd t.
\end{align*}
Letting $\lambda \to \infty$ and using the continuity of
$\calN_0$ we find  
\[
\frac1T\int_0^T\calN_0\bigl(q^0 {+} tv,v,\xi\bigr)\dd t
\geq \frac1T \int_0^{ T} \calN_0\bigl(q^0{+}tv,0,\xi\bigr)\dd t.
\]
Finally, the limit $T\to0$ yields the first inequality
in~\eqref{ineq:Nv_geq_N0}. \AAA Since this inequality is valid for \emph{every}
$(x, v, \xi) \in \rmT\bfQ \oplus \rmT^* \bfQ$, the \EEE second one follows by
the definition of $\calM_0$ in \eqref{eq:calM_0}.
\end{proof}

\begin{remark}
  For the results of this paper it would also be sufficient to require
  the $\Gamma$-convergence of $\mfD_\e^T$ only on sequences of curves
  with uniformly bounded energy $\calE_\e$. Such a restriction is
  particularly useful when dealing with partial differential
  equations; see~\cite{FreLie19?EDTS,FreMie19?DKRF}. 
\end{remark}

\subsection{Primal-dual maps}
\label{su:PrimalDual}

For fixed $q\in \bfQ$, the map $ (v,\xi)\mapsto \calM_0(q,v,\xi)$
constructed in the previous section may have various different
properties, and we study them next.

Let $X$ be a real reflexive Banach space; we will apply the results
below to the case $X=\rmT_q\bfQ$ and $X^* = \rmT_q^*\bfQ$, for a fixed $q\in
\bfQ$, but the development below holds more generally.  Recall that
any functional $\calR\colon X\to[0,\infty]$ is a \emph{dissipation
  potential} if it is convex, lower semicontinuous, non-negative, and
satisfies $\calR(0)=0$. 

\begin{definition}
\label{def:primal-dual-maps}
Let $M\colon X\times X^*\to \R \cup \{\infty\}$ satisfy 
 $M(v,\xi) \geq \BBB \langle \xi, v \rangle \EEE $. 
\begin{enumerate}[label=(\alph*)]
\item \label{def:primal-dual-maps:dual-dissipation-sum}
 We say that $M$ is a \emph{dual dissipation sum} 
 if there exists a dissipation potential $\wh\calR$ such that 
\[
M(v,\xi) = \wh\calR(v) + \wh\calR^*(\xi).
\]
We then shortly  write $M=\wh\calR{\oplus}\wh\calR^*$. 
\item \label{def:primal-dual-maps:cont-eq-dual-dissipation-sum} We say
  that $M$ has a \emph{contact-equivalent dissipation potential} if
  there exists a dissipation potential $\calR$ such that
  the contact set $\calC_M$ satisfies 
\begin{equation}
\label{eq:l:pdmap:CGR}
\calC_M \coloneqq \{(v,\xi): M(v,\xi) = \BBB \langle \xi, v\rangle \EEE \} =
\mathop{\mathrm {graph}}(\partial \calR). 
\end{equation}
\item \label{def:primal-dual-maps:force-dep-dual-dissipation-sum}We
  say that $M$ has a \emph{force-dependent dissipation potential} if,
  for every $\xi\in X^*$, there exists a dissipation potential
  $\widebar\calR_\xi$ such that
\[
M(v,\xi) = \widebar\calR_\xi(v) + (\widebar\calR_\xi)^*(\xi).
\]
\end{enumerate}
\end{definition}

\begin{lemma}
\label{le:calR.unique}
Let $M\colon X\times X^*\to \R\cup \{\infty\}$ satisfy 
$M(v,\xi) \geq \BBB \langle \xi, v \rangle \EEE $.
\label{l:pdmap}
\begin{enumerate}
\item 
\label{l:pdmap:uniqueness}
In each of the three cases above the dissipation potentials are
uniquely characterized by $M$.
\item 
\label{l:pdmap:atob}
If $M$ is a dual dissipation sum $\wh\calR{\oplus}\wh\calR^*$, then $\wh\calR$
also is a contact-equivalent dissipation potential for $M$ (i.e.\
$\ref{def:primal-dual-maps:dual-dissipation-sum}\Longrightarrow
\ref{def:primal-dual-maps:cont-eq-dual-dissipation-sum}$). The
potential $\wh\calR$ also satisfies the conditions of being a
force-dependent dissipation potential
($\ref{def:primal-dual-maps:dual-dissipation-sum}\Longrightarrow
\ref{def:primal-dual-maps:force-dep-dual-dissipation-sum}$), even
though $\wh\calR$ does not actually depend on $\xi$.
\item \label{l:pdmap:btoc2}
Assume that $M$ satisfies
\begin{subequations}
\begin{align}
\forall \xi\in X^*: \quad &M(\cdot,\xi) 
 \text{ is lower semi-continuous and convex},
\label{cond:G-cvx}\\
&M(v,\xi) \geq M(0,\xi) \text{ for all }v\in X,
\label{cond:Ggeq0}
\end{align}
\end{subequations}
and has a contact-equivalent dissipation potential~$\calR$. If
$\calR$ is superlinear, then $M$ also has a 
force-dependent dissipation potential $\widebar\calR_\xi$ 
\AAA (i.e.\ $\ref{def:primal-dual-maps:cont-eq-dual-dissipation-sum} \Longrightarrow
\ref{def:primal-dual-maps:force-dep-dual-dissipation-sum}$). \EEE \\ 
It is possible that $ \widebar\calR_\xi(q,v) \neq \calR(q,v) $.
\end{enumerate}
\end{lemma}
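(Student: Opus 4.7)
The plan is to handle the three parts in order, with the real work concentrated in part~\ref{l:pdmap:btoc2}, where superlinearity is used crucially.

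For the uniqueness in part~\ref{l:pdmap:uniqueness}, I would derive explicit formulas. In case~\ref{def:primal-dual-maps:dual-dissipation-sum}, the identity $\wh\calR(0)=0$ combined with $\wh\calR^*(0)=\sup_v(-\wh\calR(v))=0$ gives $M(v,0)=\wh\calR(v)$ and $M(0,\xi)=\wh\calR^*(\xi)$, so $\wh\calR$ is determined by $M$. In case~\ref{def:primal-dual-maps:force-dep-dual-dissipation-sum}, the same reasoning applied at fixed $\xi$ yields $(\widebar\calR_\xi)^*(\xi)=M(0,\xi)$ and hence the formula
\[
\widebar\calR_\xi(v) \;=\; M(v,\xi)-M(0,\xi).
\]
For case~\ref{def:primal-dual-maps:cont-eq-dual-dissipation-sum} I would invoke the standard fact from convex analysis that a convex lower-semicontinuous function is determined by its subdifferential graph up to an additive constant, pinned down by $\calR(0)=0=\min\calR$ (which is consistent because $(0,0)\in\calC_M$ forces $0\in\partial\calR(0)$).

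The implication $\ref{def:primal-dual-maps:dual-dissipation-sum}\Rightarrow\ref{def:primal-dual-maps:cont-eq-dual-dissipation-sum}$ is the classical characterization of the Fenchel equality case: $\wh\calR(v)+\wh\calR^*(\xi)=\langle\xi,v\rangle$ iff $\xi\in\partial\wh\calR(v)$. For $\ref{def:primal-dual-maps:dual-dissipation-sum}\Rightarrow\ref{def:primal-dual-maps:force-dep-dual-dissipation-sum}$ one simply sets $\widebar\calR_\xi\equiv\wh\calR$ and checks that $M(v,\xi)=\wh\calR(v)+\wh\calR^*(\xi)=\widebar\calR_\xi(v)+(\widebar\calR_\xi)^*(\xi)$ holds for every $\xi$.

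For $\ref{def:primal-dual-maps:cont-eq-dual-dissipation-sum}\Rightarrow\ref{def:primal-dual-maps:force-dep-dual-dissipation-sum}$, guided by uniqueness I would define $\widebar\calR_\xi(v):=M(v,\xi)-M(0,\xi)$. The dissipation-potential axioms follow immediately: $\widebar\calR_\xi(0)=0$ is trivial, non-negativity is exactly~\eqref{cond:Ggeq0}, and convexity/lower semicontinuity come from~\eqref{cond:G-cvx}. The real issue---and the main obstacle---is verifying the Legendre identity at~$\xi$, i.e.\
\[
(\widebar\calR_\xi)^*(\xi) \;=\; \sup_{v\in X}\bigl[\langle\xi,v\rangle-M(v,\xi)\bigr] + M(0,\xi) \;=\; M(0,\xi),
\]
which reduces to showing $\sup_v[\langle\xi,v\rangle-M(v,\xi)]=0$. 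The upper bound $\le 0$ is the standing assumption $M(v,\xi)\ge\langle\xi,v\rangle$. For the matching lower bound I need, for every $\xi$, some $v^*$ with $(v^*,\xi)\in\calC_M$, so that $\langle\xi,v^*\rangle-M(v^*,\xi)=0$. This is precisely where superlinearity of $\calR$ enters: by Lemma~\ref{l:superlinearity-dp}, $\partial\calR^*(\xi)\ne\emptyset$ for every $\xi$, and picking $v^*\in\partial\calR^*(\xi)$ gives $\xi\in\partial\calR(v^*)$, i.e.\ $(v^*,\xi)\in\mathop{\mathrm{graph}}(\partial\calR)=\calC_M$. This supplies the required contact point. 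Finally, the remark that $\widebar\calR_\xi\neq\calR$ is possible would be illustrated by noting that $\widebar\calR_\xi$ depends on the values of $M(\cdot,\xi)$ off the contact set, whereas $\calR$ only sees $\calC_M$, so the two potentials need not agree away from their contact points with~$\xi$.
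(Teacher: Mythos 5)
Your proposal is correct and follows essentially the same route as the paper: in particular, the heart of part~3 --- defining $\widebar\calR_\xi(v)=M(v,\xi)-M(0,\xi)$, reading off the axioms from \eqref{cond:G-cvx}--\eqref{cond:Ggeq0}, and using superlinearity via Lemma~\ref{l:superlinearity-dp} to produce a contact point $v^*$ with $\xi\in\partial\calR(v^*)$ that forces $\sup_v[\langle\xi,v\rangle-M(v,\xi)]=0$ --- is exactly the paper's argument. Your uniqueness step via the explicit formulas $\wh\calR(v)=M(v,0)$ and $\widebar\calR_\xi(v)=M(v,\xi)-M(0,\xi)$ is only a cosmetic variant of the paper's ``both sides are constant'' argument (and arguably cleaner for case~\ref{def:primal-dual-maps:force-dep-dual-dissipation-sum}, where $\xi$ is fixed); the only detail you leave implicit is that $M(0,\xi)<\infty$, which follows from the same contact point you construct.
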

\begin{proof}
  To prove the uniqueness of the  potentials, first consider
  case~\ref{def:primal-dual-maps:dual-dissipation-sum}. If $\wh\calR_1$
  and $\wh\calR_2$ are two dissipation potentials, then
\[
\wh\calR_1(v) - \wh\calR_2(v) = \wh\calR^*_2(\xi) - \wh\calR^*_1(\xi)
\qquad\text{for all }(v,\xi)\in X\times X^*.
\]
It follows that both sides are constant, and by the normalization
condition $\wh\calR_i(0) = 0$ the potentials coincide. The proof of
case~\ref{def:primal-dual-maps:force-dep-dual-dissipation-sum} is
identical. Finally, in
case~\ref{def:primal-dual-maps:cont-eq-dual-dissipation-sum}, if two
dissipation potentials represent $M$, then they have the same
subdifferential; again they are equal up to a constant, and this
constant vanishes for the same reason.

Part~\ref{l:pdmap:atob} of the lemma follows from the definition. To
prove part~\ref{l:pdmap:btoc2}, first note that by the superlinearity
and Lemma~\ref{l:superlinearity-dp}, for each $\xi\in X^*$ there
exists $v_\xi\in \partial\calR(\xi)$; since $\calC_M =
\mathrm{graph}(\partial\calR)$, this implies that
$M(v_\xi,\xi) = \BBB \langle \xi, v_\xi \rangle \EEE $. Define for each $\xi\in
X^*$ the function $\widebar\calR_\xi\colon X\to[0,\infty]$ by
\[
\widebar\calR_\xi(v) \coloneqq M(v,\xi)-M(0,\xi).
\]
Using \eqref{cond:Ggeq0} we have
$M(0,\xi) \leq M(v_\xi,\xi) = \BBB \langle \xi, v_\xi \rangle \EEE < \infty$,
hence the difference above is well-defined.  By~\eqref{cond:G-cvx}
and~\eqref{cond:Ggeq0}, the function $\widebar\calR_\xi$ is convex and lower
semicontinuous, and satisfies
$\widebar\calR_\xi(0) = 0 = \min_v\widebar\calR_\xi(v)$. To calculate the dual
$\widebar \calR_\xi^*(\xi)$, note that $v_\xi$ minimizes the convex function
$v\mapsto M(v,\xi)- \BBB \langle \xi, v \rangle \EEE $, with value $0$, so that
\[
\widebar\calR_\xi^*(\xi) = \sup_{v\in X} \; \BBB \langle \xi, v \rangle \EEE - \widebar\calR_\xi(v)
= \sup_{v\in X} \;[ \BBB \langle \xi, v \rangle \EEE -M(v,\xi)]+M(0,\xi) = M(0,\xi).\\
\]
It follows that $M(v,\xi) = \widebar \calR_\xi(v) + \widebar
\calR_\xi^*(\xi)$.  The fact that $\calR$ and
$\widebar\calR_\xi$ may be different is illustrated by the examples in
Sections~\ref{se:WigglyDiss} and \ref{se:DFM}.
\end{proof}

\subsection{Tilt- and \contactEDP\ }
\label{subsec:EDP convergence:tiltedEDPconvergence}

We now define two new convergence concepts, \emph{EDP convergence with
  tilting} and \emph{contact EDP convergence with tilting}.

\begin{definition}
\label{d:three-convergences}
Let the family $(\bfQ,\calE_\e,\calR_\e)$ of gradient systems
satisfy Assumption~\ref{ass:basic}, and recall that the limiting
function $\calM_0$ is given by~\eqref{eq:calM_0}.  The family
$(\bfQ,\calE_\e,\calR_\e)$ converges
\begin{enumerate}
\item in the sense of \emph{EDP convergence with tilting}, or shortly
  \emph{tilt-EDP convergence}, to a limit $(\bfQ,\calE_0,\wh\calR_0)$ if,
  for all $q\in\bfQ$, the integrand $\calM_0(q,\cdot,\cdot)$ is a dual
  dissipation sum with potential $\wh\calR_0(q,\cdot)$.
\item in the sense of \emph{contact EDP convergence with tilting}, or
  shortly \emph{\contactEDP}, to a limit $(\bfQ,\calE_0,\calR_\eff)$
  if, for all $q\in\bfQ$, the integrand $\calM_0(q,\cdot,\cdot)$ has a
  contact-equivalent dissipation potential $\calR_\eff(q,\cdot)$.
\end{enumerate}
The two convergences are also written as 
\[
(\bfQ,\calE_\e,\calR_\e) \tiEDPto (\bfQ,\calE_0,\wh\calR_0) 
\quad \text{ and } \quad 
(\bfQ,\calE_\e,\calR_\e) \coEDPto (\bfQ,\calE_0,\calR_\eff).
\]
\end{definition}

We add a statement on simple EDP convergence for completeness and comparison:

\begin{lemma}
\label{l:simpleEDPcvg-in-pdmaps}
Let the family $(\bfQ,\calE_\e,\calR_\e)$ of gradient systems
satisfy Assumption~\ref{ass:basic}. If, for all $q\in\bfQ$, the function
$\calM_0(q,\cdot,\cdot)$ has a force-dependent dissipation potential,
then the family $(\bfQ,\calE_\e,\calR_\e)$ converges in the
\emph{simple EDP sense} of Definition~\ref{d:EDP conv-1}.  
\end{lemma}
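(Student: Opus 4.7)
The plan is to derive simple EDP convergence directly from Assumption~\ref{ass:basic} by choosing the trivial tilt $\calF\equiv 0$ and then using the force-dependent dissipation-potential hypothesis to rewrite the resulting integrand in the required dual-sum form.

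First I would apply part~\ref{ass:EDPconv:D-e} of Assumption~\ref{ass:basic} with $\calF\equiv 0$, which immediately gives the $\Gamma$-convergence $\mfD^T_\e \Glongto \mfD^T_0(\cdot,0)$ in $\rmC([0,T];\bfQ)$. By parts~\ref{ass:EDPconv:D-e-N} and the defining identity \eqref{eq:calM_0}, this limit functional has the explicit form
\[
\mfD^T_0(q,0) = \int_0^T \calN_0\bigl(q,\dot q,0\bigr)\dd t
 = \int_0^T \calM_0\bigl(q,\dot q,-\rmD\calE_0(q)\bigr)\dd t.
\]
Combined with part~\ref{ass:EDPconv:GammaConvEe} of Assumption~\ref{ass:basic}, this already matches condition~\ref{def:EDPconv:G-convE} of Definition~\ref{d:EDP conv-1} and reduces the task to rewriting $\calM_0(q,\cdot,-\rmD\calE_0(q))$ in the split form~\eqref{def:EDPconv:D0-is-RR*-v2}.

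The second step is to use the force-dependent dissipation-potential hypothesis at the particular force $\xi = -\rmD\calE_0(q)$: for each $q\in\bfQ$ there exists a dissipation potential $\widebar\calR_{-\rmD\calE_0(q)}(q,\cdot)$ on $\rmT_q\bfQ$ such that
\[
\calM_0\bigl(q,v,-\rmD\calE_0(q)\bigr)
 = \widebar\calR_{-\rmD\calE_0(q)}(q,v)
  + \widebar\calR_{-\rmD\calE_0(q)}^*\bigl(q,-\rmD\calE_0(q)\bigr)
 \quad\text{for all } v\in \rmT_q\bfQ.
\]
I would then simply \emph{define}
\[
\wt\calR_0(q,v) \coloneqq \widebar\calR_{-\rmD\calE_0(q)}(q,v),
\]
so that $\wt\calR_0(q,\cdot)$ inherits convexity, lower semicontinuity, non-negativity, and the normalization $\wt\calR_0(q,0)=0$ from the corresponding properties of $\widebar\calR_{-\rmD\calE_0(q)}(q,\cdot)$ guaranteed by Definition~\ref{def:primal-dual-maps}\ref{def:primal-dual-maps:force-dep-dual-dissipation-sum}. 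Hence $\wt\calR_0$ is itself a dissipation potential in the sense of Definition~\ref{def:GS.DissPot}, and taking Legendre duals in the $v$-variable at fixed $q$ gives $\wt\calR_0^*(q,-\rmD\calE_0(q)) = \widebar\calR_{-\rmD\calE_0(q)}^*(q,-\rmD\calE_0(q))$.

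Substituting back yields
\[
\mfD^T_0(q,0)
 = \int_0^T \bigl[\wt\calR_0(q,\dot q) + \wt\calR_0^*\bigl(q,-\rmD\calE_0(q)\bigr)\bigr]\dd t,
\]
which is exactly condition~\ref{def:EDPconv:D-e} of Definition~\ref{d:EDP conv-1}, proving $(\bfQ,\calE_\e,\calR_\e)\EDPto(\bfQ,\calE_0,\wt\calR_0)$. There is really no hard step here: the content of the lemma is essentially bookkeeping that identifies the particular force-slice $\xi=-\rmD\calE_0(q)$ of $\calM_0$ as a genuine primal/dual dissipation sum. The only mild subtlety I would flag is that the resulting $\wt\calR_0$ genuinely depends on $\calE_0$ through the choice of slice, which is precisely the `unhealthy' phenomenon motivating the new tilt-based convergence concepts of Section~\ref{subsec:EDP convergence:tiltedEDPconvergence}; uniqueness of $\wt\calR_0$ given $\calE_0$ follows from Lemma~\ref{l:pdmap}\ref{l:pdmap:uniqueness} applied to $M(v,\xi)=\calM_0(q,v,-\rmD\calE_0(q))$ viewed as a single dual-sum decomposition.
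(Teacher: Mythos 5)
Your proposal is correct and follows essentially the same route as the paper's proof: take $\calF\equiv 0$ in Assumption~\ref{ass:basic} to get the required $\Gamma$-convergence, identify $\calN_0(q,v,0)=\calM_0(q,v,-\rmD\calE_0(q))$, and evaluate the force-dependent dissipation potential on the slice $\xi=-\rmD\calE_0(q)$ to obtain the dual-sum form~\eqref{def:EDPconv:D0-is-RR*-v2} with $\wt\calR_0=\widebar\calR_{-\rmD\calE_0}$. The only difference is that you spell out the (trivial but worth noting) identification of the Legendre duals, which the paper leaves implicit.
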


\begin{remark}
  The opposite implication does not hold: if the family converges in
  the simple EDP sense, then it follows that there exists a
  dissipation potential $\wt\calR_0$ such that
  $\calM_0(q,v,-\rmD\calE_0(q)) = \wt\calR_0(q,v) +
  \wt\calR_0^*(q,-\rmD\calE_0(q))$. In order to have a force-dependent
  dissipation potential, however, we need information about
  $\calM_0(q,v,\xi)$ for all values of $\xi$, not just $\xi =
  -\rmD\calE_0(q)$.
\end{remark}

\begin{proof}[Proof of Lemma~\ref{l:simpleEDPcvg-in-pdmaps}]
  Assume that $(\bfQ,\calE_\e,\calR_\e)$ satisfies
  Assumption~\ref{ass:basic}, and that the limit function $\calM_0$
  has a force-dependent dissipation potential $\widebar
  \calR_\xi$. Under Assumption~\ref{ass:basic},
  part~\ref{def:EDPconv:G-convE} of Definition~\ref{d:EDP conv-1} is
  automatically satisfied. By taking $\calF=0$ in the
  $\Gamma$-convergence statement of $\mfD_\e^T$ in
  Assumption~\ref{ass:basic}, we recover the $\Gamma$-convergence in
  part~\ref{def:EDPconv:D-e} of Definition~\ref{d:EDP conv-1}. The
  fact that $\widebar\calR_\xi$ is a force-dependent dissipation
  potential implies that
\[
\calN_0(q,v,0) = \calM_0(q,v,-\rmD\calE_0(q)) 
= \widebar\calR_{-\rmD\calE_0(q)}(q,v) + \widebar \calR_{-\rmD\calE_0(q)}^*(q,-\rmD\calE_0(q)).
\]
Therefore the limit $\mfD_0^T$ is given as a sum $\widebar\calR_{-\rmD\calE_0}\oplus \widebar\calR_{-\rmD\calE_0}^*$, thus fulfilling~\eqref{def:EDPconv:D0-is-RR*-v2}.
\end{proof}

In each of the three cases, the convergence uniquely fixes a limiting
dissipation potential $\wh\calR_0(q,\cdot)$, $\calR_\eff(q,\cdot)$, or
$\wt\calR_0(q,\cdot)$ for tilt-EDP, contact-EDP, or simple EDP
convergence.

\subsection{Properties of tilt- EDP and \contactEDP}

In Section~\ref{subsec:intro-towards-better-concept} we described how
we want the new convergence concepts to be such that tilting the
energies does not change the effective dissipation potentials. The
definitions above have been constructed with this aim in mind, and we
now check that indeed the two tilted convergence concepts have this
property.

\begin{lemma}[Independence of tilt in tilt-EDP and \contactEDP]
Let $\square$ signify either tilt-EDP or \contactEDP. 
If 
\[
(\bfQ,\calE_\e,\calR_\e) \overset{\square}{\longrightarrow}(\bfQ, \AAA \calE_0,
 \EEE \calR_0),
\]
then for all $\wt\calF\in \rmC^1(\bfQ)$ we have 
\[
(\bfQ,\,\calE_\eps {+}\wt\calF,\,\calR_\e) 
\overset{\square}{\longrightarrow} (\bfQ,\, \AAA \calE_0 \EEE {+} \wt\calF,\, \calR_0).
\]
Note that the limiting dissipation potential $\calR_0$ is the same for all $\wt\calF$.
\end{lemma}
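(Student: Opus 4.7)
The plan is to verify that the tilted family $(\bfQ, \calE_\eps + \wt\calF, \calR_\eps)$ satisfies Assumption~\ref{ass:basic} and to show that, crucially, the associated limit function $\calM_0$ is the \emph{same} as for the untilted family. Once this identity is established, both tilt-EDP and contact-EDP convergence follow immediately with the same limiting dissipation potential, since those concepts depend only on $\calM_0$.

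Starting with Assumption~\ref{ass:basic}\,\ref{ass:EDPconv:GammaConvEe}: $\Gamma$-convergence is preserved under continuous perturbations, so $\calE_\eps + \wt\calF \Glongto \calE_0 + \wt\calF$. For part~\ref{ass:EDPconv:D-e}, the dissipation functional associated with the tilted family, at tilt $\calF \in \rmC^1(\bfQ)$, reads
\[
\wt{\mfD}_\eps^T(q,\calF) = \int_0^T \calM_\eps\bigl(q,\dot q, -\rmD\calE_\eps(q) - \rmD\wt\calF(q) - \rmD\calF(q)\bigr)\dd t = \mfD_\eps^T(q, \wt\calF + \calF).
\]
Since $\wt\calF + \calF \in \rmC^1(\bfQ)$, Assumption~\ref{ass:basic}\,\ref{ass:EDPconv:D-e} for the original family yields $\Gamma$-convergence of $\wt{\mfD}_\eps^T(\cdot,\calF)$ to $\mfD_0^T(\cdot, \wt\calF + \calF)$. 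Using the representation in part~\ref{ass:EDPconv:D-e-N}, we define $\wt\calN_0(q,v,\eta) \coloneqq \calN_0(q, v, \eta - \rmD\wt\calF(q))$ and obtain $\wt{\mfD}_0^T(q,\calF) = \int_0^T \wt\calN_0(q,\dot q, -\rmD\calF(q))\dd t$, as required.

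The key computation is the comparison of the new $\wt\calM_0$ with the original $\calM_0$. By definition~\eqref{eq:calM_0} applied to the tilted system,
\[
\wt\calM_0(q,v,\xi) = \wt\calN_0\bigl(q,v,\xi + \rmD(\calE_0 + \wt\calF)(q)\bigr) = \calN_0\bigl(q,v,\xi + \rmD\calE_0(q)\bigr) = \calM_0(q,v,\xi),
\]
so the $\rmD\wt\calF$ contributions cancel exactly. Consequently Assumptions~\ref{ass:EDPconv:N-Young} and~\ref{ass:EDPconv:Nv_geq_N0} transfer verbatim from the original family to the tilted one, and Assumption~\ref{ass:basic} is fully verified.

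To conclude, in the tilt-EDP case the hypothesis gives $\calM_0(q,v,\xi) = \calR_0(q,v) + \calR_0^*(q,\xi)$; since $\wt\calM_0 = \calM_0$, the same identity realises $\wt\calM_0$ as a dual dissipation sum with the identical dissipation potential $\calR_0(q,\cdot)$. In the contact-EDP case the contact sets agree, $\calC_{\wt\calM_0(q,\cdot,\cdot)} = \calC_{\calM_0(q,\cdot,\cdot)} = \mathrm{graph}(\pl_v \calR_0(q,\cdot))$, so $\calR_0$ is again a contact-equivalent dissipation potential for the tilted family. The main (and only) subtlety is the bookkeeping in the computation $\wt\calM_0 = \calM_0$; everything else is direct verification of the definitions. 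Uniqueness from Lemma~\ref{le:calR.unique}\,\ref{l:pdmap:uniqueness} guarantees that $\calR_0$ is the genuinely canonical choice and is indeed independent of the tilt $\wt\calF$. \qed
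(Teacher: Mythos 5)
Your proposal is correct and follows essentially the same route as the paper's own proof: verify Assumption~\ref{ass:basic} for the tilted family via the identity $\wt{\mfD}_\eps^T(q,\calF)=\mfD_\eps^T(q,\calF{+}\wt\calF)$, set $\wt\calN_0(q,v,\eta)=\calN_0(q,v,\eta{-}\rmD\wt\calF(q))$, and observe that the shifts cancel in $\wt\calM_0=\calM_0$, from which both convergence concepts transfer with the identical $\calR_0$. The only (harmless) addition is your explicit appeal to the uniqueness statement of Lemma~\ref{le:calR.unique}, which the paper leaves implicit.
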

\begin{proof}
  Because of the convergence
  $(\bfQ,\calE_\e,\calR_\e)\overset{\square}{\longrightarrow} (\bfQ, \AAA
  \calE_0, \EEE \calR_0)$, Assumption~\ref{ass:basic} is satisfied for the
  family $(\bfQ,\calE_\e, \calR_\e)$.  For both tilt-EDP and \contactEDP, we
  first check that the perturbed family $(\bfQ,\calE_\e{+}\wt\calF, \calR_\e)$
  also satisfies Assumption~\ref{ass:basic}.

The $\Gamma$-convergence requirement $\calE_\e{+} \wt\calF \Gto \AAA
\calE_0 \EEE {+}\wt\calF$, part~\ref{ass:EDPconv:GammaConvEe} of
Assumption~\ref{ass:basic}, follows directly from the properties of
$\Gamma$-convergence and the continuity of $\wt\calF$.  

For parts~\ref{ass:EDPconv:D-e} and~\ref{ass:EDPconv:D-e-N} we have to
tilt the energy $ \AAA \calE_0 \EEE {+}\wt\calF$ by an arbitrary tilt 
$\calF\in \rmC^1(\bfQ)$ and observe that
\[
\wt\mfD_\e^T(q,\calF) \coloneqq \int_0^T \Bigl[\calR_\e(q,\dot q) + 
 \calR_\e^*\big(q,-\rmD(\calE_\eps {+}\wt\calF)(q) {-} 
 \rmD\calF (q)\big)\Bigr]\dd t
= \mfD_\e^T(q,\calF{+}\wt\calF).
\]
Therefore $\wt\mfD_\e^T(\cdot,\calF)$ $\Gamma$-converges to $\mfD_0^T
(\cdot,\calF {+}\wt\calF)$, and we have
\[
\mfD_0^T (q,\calF{+}\wt\calF)
= \wt\mfD_0^T (q,\calF) \coloneqq \int_0^T\wt\calN_0(q,\dot q,-\rmD\calF(q))\dd t
\]
with $ \wt \calN_0(q,v,\eta) \coloneqq \calN_0(q,v,\eta-\rmD\wt\calF(q))$.
Therefore $\wt\mfD_\e^T$, $\wt\mfD_0^T$, and $\wt\calN_0$ satisfy
parts~\ref{ass:EDPconv:D-e} and~\ref{ass:EDPconv:D-e-N}.

Defining $\wt\calM_0(q,v,\xi) \coloneqq \wt\calN_0\big(q,v,\xi {+} 
\AAA \rmD\calE_0(q){+}\rmD\wt\calF(q) \EEE \big)$, we find
\begin{equation}
\label{eq:M0=M0}
\wt\calM_0(q,v,\xi) = \calN_0(q,v,\xi {+} \AAA \rmD\calE_0(q) \EEE ) = \calM_0(q,v,\xi).
\end{equation}
This identity establishes parts~\ref{ass:EDPconv:N-Young}
and~\ref{ass:EDPconv:Nv_geq_N0}, and therefore the family
$(\bfQ,\calE_\e{+}\calF,\calR_\e)$ satisfies Assumption~\ref{ass:basic}.

The identity $\wt\calM_0 = \calM_0$ in~\eqref{eq:M0=M0} also implies
that the family $(\bfQ,\calE_\e{+}\calF,\calR_\e)$ satisfies the same
convergence as the untilted family
$(\bfQ,\calE_\e,\calR_\e)$.
\end{proof}

Next, we consider relations between the three convergence concepts. Up to a
technical requirement, the three concepts are ordered:

\begin{lemma}
\label{l:post-def-three-convergences}
We have 
\[
\text{tilt-EDP convergence with }\wh\calR_0 \ \Longrightarrow \
\text{\contactEDP\ with }\calR_\eff=\wh\calR_0
\]
and 
\[
\left.
\begin{array}{r}
\text{\contactEDP}\\
\calR_\eff(q,\cdot) \text{ superlinear for all $q$}
\end{array}
\right\}
\ \Longrightarrow \ \text{simple EDP convergence}.
\]
In addition, if tilt-EDP convergence holds, then all three
convergences hold and the dissipation potentials coincide: $\wh\calR_0 = \calR_\eff
= \wt\calR_0$.
\begin{proof}
Both arrows follow directly from Lemma~\ref{l:pdmap}. Part~\ref{l:pdmap:atob} of Lemma~\ref{l:pdmap} implies that in the case of tilt-EDP convergence all three convergences hold, and the potentials coincide.
\end{proof}
\end{lemma}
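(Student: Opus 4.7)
The plan is to reduce each claim to a pointwise (in $q\in \bfQ$) application of the primal--dual classification in Lemma~\ref{l:pdmap}, exploiting the fact that Assumption~\ref{ass:basic} equips the integrand $\calM_0(q,\cdot,\cdot)$ with exactly the structural properties (convexity, lower semicontinuity, the Fenchel--Young bound $\calM_0\geq\langle\xi,v\rangle$, and $\calM_0(q,v,\xi)\geq \calM_0(q,0,\xi)$) required by that lemma.

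For the first implication, tilt-EDP convergence means by definition that for every $q$ the function $\calM_0(q,\cdot,\cdot)$ is a dual dissipation sum with potential $\wh\calR_0(q,\cdot)$. I would then apply Lemma~\ref{l:pdmap}\ref{l:pdmap:atob} pointwise in $q$ to conclude that $\wh\calR_0(q,\cdot)$ is simultaneously a contact-equivalent dissipation potential for $\calM_0(q,\cdot,\cdot)$. This establishes \contactEDP, and the uniqueness in Lemma~\ref{l:pdmap}\ref{l:pdmap:uniqueness} forces $\calR_\eff(q,\cdot)=\wh\calR_0(q,\cdot)$.

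For the second implication I would assume \contactEDP\ with $\calR_\eff(q,\cdot)$ superlinear and invoke Lemma~\ref{l:pdmap}\ref{l:pdmap:btoc2}. To do so I have to check its hypotheses for $M=\calM_0(q,\cdot,\cdot)$: the convexity and lower semicontinuity of $v\mapsto \calM_0(q,v,\xi)$ follow from the corresponding properties of $\calN_0$ stated in Assumption~\ref{ass:basic}\ref{ass:EDPconv:D-e-N} together with the definition $\calM_0(q,v,\xi)=\calN_0(q,v,\xi{+}\rmD\calE_0(q))$, while $\calM_0(q,v,\xi)\geq\calM_0(q,0,\xi)$ is precisely Assumption~\ref{ass:basic}\ref{ass:EDPconv:Nv_geq_N0}. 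Lemma~\ref{l:pdmap}\ref{l:pdmap:btoc2} then produces a force-dependent dissipation potential $\widebar\calR_\xi(q,\cdot)$, and Lemma~\ref{l:simpleEDPcvg-in-pdmaps} converts this into simple EDP convergence.

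For the final statement, tilt-EDP yields \contactEDP\ with $\calR_\eff=\wh\calR_0$ by the first step. By Lemma~\ref{l:pdmap}\ref{l:pdmap:atob} the potential $\wh\calR_0(q,\cdot)$ (trivially extended, independent of $\xi$) is also a force-dependent dissipation potential for $\calM_0(q,\cdot,\cdot)$, so Lemma~\ref{l:simpleEDPcvg-in-pdmaps} delivers simple EDP convergence as well. The uniqueness in Lemma~\ref{l:pdmap}\ref{l:pdmap:uniqueness}, applied in each of the three classes, then pins down $\wh\calR_0=\calR_\eff=\wt\calR_0$. The only delicate point in this plan is the hypothesis-checking step for Lemma~\ref{l:pdmap}\ref{l:pdmap:btoc2} in the second implication; everything else is bookkeeping that follows directly from pointwise application of an already-proven classification together with uniqueness.
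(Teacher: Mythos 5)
Your proposal is correct and follows essentially the same route as the paper, which simply cites Lemma~\ref{l:pdmap} (part~\ref{l:pdmap:atob} for the first arrow and the coincidence of potentials, part~\ref{l:pdmap:btoc2} together with Lemma~\ref{l:simpleEDPcvg-in-pdmaps} for the second). Your explicit verification that Assumption~\ref{ass:basic}\ref{ass:EDPconv:D-e-N}, \ref{ass:EDPconv:N-Young}, and \ref{ass:EDPconv:Nv_geq_N0} supply the hypotheses \eqref{cond:G-cvx} and \eqref{cond:Ggeq0} of Lemma~\ref{l:pdmap}\ref{l:pdmap:btoc2} is exactly the bookkeeping the paper leaves implicit.
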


\begin{lemma}[Alternative characterization of tilt-EDP convergence]
\label{l:alt-forms-tilted}
Consider a family \\ $(\bfQ,\calE_\e,\calR_\e)$ of gradient systems,
and a fixed gradient system $(\bfQ,\calE_0,\calR_0)$. Then the following
statements are equivalent:
\begin{enumerate}
\item $(\bfQ,\calE_\e,\calR_\e)\tiEDPto (\bfQ,\calE_0,\calR_0)$;
\item For each $\calF\in \rmC^1(\bfQ)$ we have
 $(\bfQ,\calE_\e {+} \calF,\calR_\e)\EDPto
 (\bfQ, \calE_0{+}\calF,\calR_0)$.
\end{enumerate}
\end{lemma}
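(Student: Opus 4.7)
The plan is to prove the two implications separately, noting that the equivalence is essentially an unpacking of Definitions~\ref{d:EDP conv-1} and~\ref{d:three-convergences} together with the observation that the functional $\mfD_\eps^T(\cdot,\calF)$ is, by construction, exactly the dissipation functional associated with the tilted energy $\calE_\eps{+}\calF$.

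For the implication $(1)\Rightarrow(2)$, I start from tilt-EDP convergence, which already supplies Assumption~\ref{ass:basic} for the untilted family and the identity $\calM_0(q,v,\xi)=\calR_0(q,v)+\calR_0^*(q,\xi)$. Fix any $\calF\in\rmC^1(\bfQ)$. The $\Gamma$-convergence $\calE_\eps\Gto\calE_0$ together with the continuity of $\calF$ immediately yields $\calE_\eps{+}\calF\Gto\calE_0{+}\calF$. For the dissipation part, by Assumption~\ref{ass:basic}\ref{ass:EDPconv:D-e}--\ref{ass:EDPconv:D-e-N} the sequence $\mfD_\eps^T(\cdot,\calF)$ $\Gamma$-converges to $\int_0^T\calN_0(q,\dot q,-\rmD\calF(q))\,\dd t$. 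Using $\calN_0(q,v,\eta)=\calM_0(q,v,\eta-\rmD\calE_0(q))$ and $\calM_0=\calR_0\oplus\calR_0^*$, this limit rewrites as
\[
\int_0^T\bigl[\calR_0(q,\dot q)+\calR_0^*\bigl(q,-\rmD(\calE_0{+}\calF)(q)\bigr)\bigr]\dd t,
\]
which is precisely the form required by Definition~\ref{d:EDP conv-1} for simple EDP convergence of $(\bfQ,\calE_\eps{+}\calF,\calR_\eps)$ to $(\bfQ,\calE_0{+}\calF,\calR_0)$.

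For the implication $(2)\Rightarrow(1)$, I verify successively the five parts of Assumption~\ref{ass:basic} and then the structural condition. Taking $\calF=0$ in (2) immediately gives $\calE_\eps\Gto\calE_0$. For each $\calF$, simple EDP convergence of the tilted family gives $\Gamma$-convergence of $\mfD_\eps^T(\cdot,\calF)$ to
\[
\mfD_0^T(q,\calF)=\int_0^T\bigl[\calR_0(q,\dot q)+\calR_0^*\bigl(q,-\rmD\calE_0(q)-\rmD\calF(q)\bigr)\bigr]\dd t,
\]
where crucially the same $\calR_0$ appears for every tilt $\calF$. This supplies parts~\ref{ass:EDPconv:D-e}--\ref{ass:EDPconv:D-e-N} with
\[
\calN_0(q,v,\eta)\coloneqq\calR_0(q,v)+\calR_0^*(q,\eta{-}\rmD\calE_0(q)),
\]
whose $T$-independence is built in. Parts~\ref{ass:EDPconv:N-Young} and~\ref{ass:EDPconv:Nv_geq_N0} follow from the Fenchel--Young inequality and from $\calR_0(q,\cdot)\geq\calR_0(q,0)=0$, respectively. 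Finally, by \eqref{eq:calM_0},
\[
\calM_0(q,v,\xi)=\calN_0(q,v,\xi{+}\rmD\calE_0(q))=\calR_0(q,v)+\calR_0^*(q,\xi),
\]
so $\calM_0(q,\cdot,\cdot)$ is a dual dissipation sum with potential $\calR_0(q,\cdot)$, giving tilt-EDP convergence.

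The only subtlety that requires some care is making sure that the dissipation potential $\calR_0$ produced by simple EDP convergence in (2) is genuinely the same object across all tilts $\calF$—this is precisely what statement (2) asserts by naming the common limit $(\bfQ,\calE_0{+}\calF,\calR_0)$, and it is exactly this uniform-in-$\calF$ decomposition that corresponds structurally to the condition $\calM_0=\calR_0\oplus\calR_0^*$. Beyond this observation, the remainder is a matter of matching sign conventions in the definitions of $\calM_\eps$, $\calN_0$ and $\calM_0$.
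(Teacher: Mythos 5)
Your proposal is correct and is precisely the ``reshuffling of the definitions'' that the paper invokes without writing out: both directions amount to identifying the dissipation functional of the tilted system $(\bfQ,\calE_\eps{+}\calF,\calR_\eps)$ with $\mfD_\eps^T(\cdot,\calF)$ and translating the dual-sum condition $\calM_0=\calR_0\oplus\calR_0^*$ into the form \eqref{def:EDPconv:D0-is-RR*-v2} for every tilt. Your closing remark that the essential content of (2) is the uniformity of $\calR_0$ across all tilts is exactly the right point to flag.
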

\noindent
The proof directly follows by reshuffling the definitions. 

\medskip
The important thing to note here is that the problems with simple EDP
convergence, in having force-dependent dissipation potentials, can not
be solved simply by requiring simple EDP convergence for all tilted
versions of the systems with a single dissipation potential. By
Lemma~\ref{l:alt-forms-tilted} this requirement is equivalent to
tilt-EDP convergence, and therefore is too strong: in the two examples
of Sections~\ref{se:WigglyDiss} and \ref{se:DFM} tilt-EDP convergence
does not hold.

The benefit of the intermediate concept of \contactEDP\ lies in the
combination of tilting, which allows the convergence to roam over all
of $(v,\xi)$-space, with restriction to the contact set, which allows
the connection between $\calM_0$ and $\calR_0$ to focus on the case of
contact, i.e.\ the kinetic relation.  We comment more on
this in Section~\ref{sec:understanding}.

\begin{remark}[Comparison to \cite{SanSer04GCGF,Serf11GCGF}]
\label{rm:CompSanSerf}
These fundamental works on the evolutionary $\Gamma$-convergence can
be understood in our setting as a special case of tilt-EDP
convergence.  Writing the dissipation functional $\mfD_\eps^T$ as a
sum of the velocity and a slope part, viz.
\[
\mfD_\eps^T=\mfD_\eps^\text{vel}+ \mfD_\eps^\text{slp} \text{ with }
\mfD_\eps^\text{vel}(q) = \int_0^T\!\! \calR_\eps(q,\dot q) \dd t
\text{ and }\mfD_\eps^\text{slp}(q) = \int_0^T\!\!
\calR^*_\eps(q,{-}\rmD\calE_\eps(q)) \dd t ,
\]
the conditions in \cite{SanSer04GCGF,Serf11GCGF} are the
well-preparedness of initial conditions $\calE_\eps(q_\eps(0)) \to
\calE_0(q_0(0))$ and the liminf
relations 
\begin{align*}
&\wt q_\eps \to \wt q_0 \text{ in }\bfQ \ \Longrightarrow\
\liminf_{\eps \to 0} \calE_\eps(\wt q_\eps) \geq \calE_0(\wt q_0),
\\
&\liminf_{\eps \to 0} \mfD_\eps^\text{vel}(q_\eps(\cdot)) \geq
\mfD_0^\text{vel}(q_0(\cdot)) =\int_0^T\!\! \calR_\eff(q_0,\dot q_0) \dd t
\\
&\liminf_{\eps \to 0} \mfD_\eps^\text{slp}(q_\eps(\cdot)) \geq
\mfD_0^\text{slp}(q_0(\cdot)) =\int_0^T\!\! \calR^*_\eff 
 \big(q_0,{-}\rmD\calE_0(\dot q_0)\big) \dd t.
\end{align*}
In \cite{SanSer04GCGF,Serf11GCGF}, the last two relations are imposed
only for solutions $q_\eps$ of the gradient-flow equation satisfying
$q_\eps(\cdot) \to q_0(\cdot)$. The separate limits of the
two terms impose the structure of $\mfD^T_0=
\mfD_0^\text{vel}+\mfD_0^\text{slp}$ in terms of an integral over a
dual sum $\calR_\eff {\oplus} \calR_\eff^*$, thus leading to tilt-EDP
convergence. 

Our notion of tilt-EDP convergence is more general, since we only ask
convergence of the sum. As can be easily seen in the examples in
Sections \ref{se:DFM} and \ref{se:Membrane}, there is a nontrivial
interaction of the two terms, as a result of which the individual liminf
estimates do not hold.
\end{remark}

\AAA

\begin{remark}[Comparison to~\cite{DuongLamaczPeletierSharma17,HilderPeletierSharmaTse20,Schlottke20TH}]
  A related line of evolutionary convergence in variational systems centers
  around convergence of the functional
  $q \mapsto \mfJ_\e(q):= \calE_\e(q(T))-\calE_\e(q(0)) +
  \mfD_\e(q)$. In~\cite{DuongLamaczPeletierSharma17,HilderPeletierSharmaTse20}
  the authors use a duality formulation for~$\mfJ_\e$ to combine a
  coarse-graining map and the limit $\e\to0$ into a single method.

In the context of this paper, $\Gamma$-convergence of $\mfD_\e$ and
$\Gamma$-convergence of $\mfJ_\e$ are very similar properties: when $\calE_\e$
converges in the $\Gamma$-sense and convergence of initial energies is assumed,
then $\Gamma$-convergence of $\mfD_\e$ implies $\Gamma$-convergence of
$\mfJ_\e$. Under additional conditions on the system one can also prove the
converse.

In other cases, however, the energies $\calE_\e$ do \emph{not} $\Gamma$-converge. In~\cite[Ch.~7]{Schlottke20TH} a reversible chemical reaction, modeled by a gradient structure, is scaled such that the limit $\e=0$ is a one-way reaction. 
In this situation neither $\calE_\e$ nor $\mfD_\e$ converges, and none of the
results of this paper apply. The extended functional $\mfJ_\e$ does converge,
however, illustrating how a method based on $\mfJ_\e$ allows us to deal with
the loss of the gradient structure while preserving the structure of a
variational evolution.
\end{remark}

\EEE 

\section{Contact-EDP convergence for a model with a wiggly dissipation}
\label{se:WigglyDiss}

\subsection{Model and convergence results}

We study a family $(\R,\calE,\calR_\eps)$, $\eps>0$, of
gradient systems, where the energy is independent of $\eps$ while the
dissipation strongly 
oscillates in the state variable $q$, namely  
\[
\calR_\eps(q,v)= \frac{\mu(q,q/\eps)}2 v^2,
\]
where $\mu\in \rmC^0(\R^2)$ is 1-periodic in the second variable,
i.e.\ $\mu(q,y{+}1)=\mu(q,y)$, and has positive lower and upper
bound $0< \underline{m}\leq \mu(q,y)\leq \widebar m<\infty$. We set
\[
\calR_\eff(q,v)=\frac{\widebar\mu(q)}2 v^2 \quad \text{with } \widebar\mu(q) \coloneqq 
\int_0^1 \mu(q,y) \dd y .
\]
Combining the following Theorem~\ref{thm:WiggDiss} and Lemma~\ref{le:EDPimpliesCvgSol}, we obtain the following convergence result
for the gradient-flow equations. The
solutions $q^\eps$ of 
\[
0 = \mu(q^\eps,q^\eps/\eps)\, \dot q^\eps + \rmD \calE(q^\eps)
\]
converge to the solution $q$ of the gradient flow
\begin{equation}
 \label{eq:WiDi.effEq}
   0 = \widebar\mu(q)\, \dot q + \rmD \calE(q). 
\end{equation}

\begin{theorem}[\contactEDP]
\label{thm:WiggDiss} We have
  $(\R,\calE,\calR_\eps) \coEDPto (\R,\calE,\calR_\eff)$, where
  $\calR_\eff(q,\cdot)$ is quadratic and is independent of $\calE$. 

  If $\mu(q,\cdot)$ is not constant, we have simple EDP convergence
  for a non-quadratic $\wt\calR_0(q,\cdot)$ that depends on $\calE$,
  and there is no tilt-EDP convergence.
\end{theorem}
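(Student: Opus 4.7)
The heart of the argument is to identify the limit integrand $\calM_0(q,v,\xi)$ explicitly via a periodic-homogenization cell problem. My plan is to first establish that Assumption~\ref{ass:basic} is satisfied, with
\begin{equation*}
\calM_0(q,v,\xi) \;=\; \inf\Big\{\int_0^1\!\Big[\tfrac{\mu(q,z(s))(vz'(s))^2}{2} + \tfrac{\xi^2}{2\mu(q,z(s))}\Big]\dd s \,\Big|\, z\colon[0,1]\to\R,\ z(1)=z(0)+\sign(v)\Big\}
\end{equation*}
for $v\neq 0$ (and as a relaxation at $v=0$). The $\Gamma$-liminf uses two-scale convergence in the fast variable $q/\eps$: along any sequence $q_\eps\to q_0$ with $\dot q_\eps$ bounded in a suitable norm, the rapidly oscillating trajectory $q_\eps/\eps$ generates a Young measure parametrized by the cell variable $z$. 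For the $\Gamma$-limsup, given a cell-problem profile $z$, one constructs a recovery sequence of the form $q_\eps(t)=q_0(t)+\eps\,\tilde q(t/\eps)$ where the $1/|v|$-periodic profile $\tilde q$ is chosen so that $q_\eps/\eps$ traces out the prescribed path through one period of $\mu(q_0,\cdot)$. The uniform bounds $0<\underline m \le \mu \le \widebar m$ give the needed coercivity.

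Granting the cell-problem formula, the remainder is structural. To identify $\calR_\eff$ I would show the contact characterization $\calM_0(q,v,\xi)=v\xi$ iff $\xi=\widebar\mu(q)v$. The lower bound uses AM-GM pointwise, $\tfrac{\mu v^2 (z')^2}{2}+\tfrac{\xi^2}{2\mu}\ge |v\,\xi\,z'|\ge v\,\xi\,z'$, integrated against the constraint $\int_0^1 z'\,\dd s=\sign(v)$, yielding $\calM_0(q,v,\xi)\ge v\xi$. Equality forces $v\,z'(s)=|\xi|/\mu(q,z(s))$; integrating the ODE $\mu(q,z)\,z'=|\xi|/v$ from $0$ to $1$ under the boundary condition $z(1)-z(0)=\sign(v)$ gives $\widebar\mu(q)\,|v|=|\xi|$, matching the sign from the AM-GM inequality. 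Hence the contact set is the graph of $v\mapsto \widebar\mu(q)\,v$, which equals $\partial_v\calR_\eff(q,\cdot)$ with $\calR_\eff(q,v)=\widebar\mu(q)v^2/2$. This establishes contact-EDP convergence in the sense of Definition~\ref{d:three-convergences}, and the result is manifestly independent of $\calE$.

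To rule out tilt-EDP convergence when $\mu(q,\cdot)$ is non-constant, I would evaluate the cell problem at $\xi=0$: only the first term survives, and a change of variables $y=z(s)$ together with Cauchy--Schwarz yields
\begin{equation*}
\calM_0(q,v,0) \;=\; \tfrac{v^2}{2}\Big(\textstyle\int_0^1\sqrt{\mu(q,y)}\,\dd y\Big)^{2}.
\end{equation*}
If $\calM_0(q,\cdot,\cdot)$ were a dual dissipation sum $\widehat\calR_0\oplus\widehat\calR_0^*$, then by the uniqueness in Lemma~\ref{le:calR.unique}(\ref{l:pdmap:atob}) we would have $\widehat\calR_0=\calR_\eff$, forcing $\calM_0(q,v,0)=\widebar\mu(q)v^2/2$. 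But Jensen's inequality $(\int\!\!\sqrt\mu)^2\le \int\!\mu=\widebar\mu$ is strict precisely when $\mu(q,\cdot)$ is non-constant, ruling out tilt-EDP convergence. Simple EDP convergence follows from Lemma~\ref{l:simpleEDPcvg-in-pdmaps} applied with $\widebar\calR_\xi(q,v):=\calM_0(q,v,\xi)-\calM_0(q,0,\xi)$; the resulting $\wt\calR_0(q,v)=\widebar\calR_{-\rmD\calE(q)}(q,v)$ depends explicitly on $\rmD\calE(q)$, and a direct computation of the optimizer of the cell problem for $\xi\neq 0$ produces a non-quadratic profile in $v$.

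The main obstacle is the first step: proving the $\Gamma$-convergence of $\mfD_\eps^T(\cdot,\calF)$ to the integral functional whose density is $\calM_0$. The difficulty is twofold: the joint dependence of $\mu$ on the slow variable $q$ and the fast variable $q/\eps$ prevents a direct reduction to classical periodic homogenization (the slow variable also evolves), and the cell problem depends on the macroscopic rate $v=\dot q_0$, so the recovery sequence must be tuned to the local velocity along $q_0$. A localization argument on small time-windows where $q_0$ is approximately affine, combined with a diagonal extraction, should handle this, but keeping track of the simultaneous convergence for \emph{all} tilts $\calF\in \rmC^1(\bfQ)$ (as required for Assumption~\ref{ass:basic}) is the delicate technical point.
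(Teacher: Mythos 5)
Your proposal follows essentially the same route as the paper: the cell-problem formula for $\calM_0$, the pointwise AM--GM lower bound $\calM_0(q,v,\xi)\geq v\xi$ with equality forcing $\mu(q,z)\lvert v\rvert z'=\xi$ and hence $\xi=\widebar\mu(q)v$, the evaluation $\calM_0(q,v,0)=\mu_{1/2}(q)v^2/2$ with $\mu_{1/2}<\widebar\mu$ to exclude tilt-EDP convergence, and the force-dependent potential $\widebar\calR_\xi$ for simple EDP convergence all coincide with the paper's argument. The only difference is that the paper obtains the $\Gamma$-convergence of $\mfD^T_\eps(\cdot,\calF)$ by citing \cite[Thm.\,2.4]{DoFrMi19GSWE} (a mild generalization of Braides' one-dimensional homogenization results), whereas you sketch a two-scale/recovery-sequence proof and correctly flag it as the main technical burden.
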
 

We emphasize that the gradient-flow equation obtained from simple
EDP convergence is indeed the same as the equation obtained from
\contactEDP:
\begin{equation}
  \label{eq:WiDi.6}
  0=\pl_v \wt\calR_0(q,\dot q)+ \rmD\calE(q) = \pl_v\calM_0(q,\dot
q,{-}\rmD\calE(q)) + \rmD\calE(q).
\end{equation}
This form can be more explicit by using the fact that
$\calM_0(q,\cdot,\cdot)$ only depends on $v^2$ and~$\xi^2$ and is
homogeneous of degree one in these variables, viz.\ 
\[
\calM_0(q,v,\xi)= \big(\xi^2 {+} \widebar\mu(q)^2 v^2)\:
\Phi\Big(q,\frac{\xi^2}{\xi^2 {+} \widebar\mu(q)^2 v^2}  \Big) .
\]
This follows from the explicit representation of $\calM_0$ given
in \eqref{eq:WiDi.M0.b}. The function $\Phi\colon\R\ti [0,1]\to
\R$ is continuous and satisfies 
\[
\Phi(q,0)=\frac{\mu_{1/2}(q)}{2\widebar\mu(q)^2}, \ \
\Phi(q,1/2)=\frac1{2\widebar\mu(q)}, \ \ \Phi(q,1)=\frac1{2\mu_\mafo{max}(q)},
\ \ \Phi(q,s)\geq \frac{\sqrt{s(1{-}s)}}{\widebar\mu(q)}, 
\]
where the last relation follows from $\calM_0(q,v,\xi)\geq \xi
v$. With this, we find the force-dependent dissipation potential
\[
\widebar\calR_\xi(q,v) = \big(\xi^2 {+} \widebar\mu(q)^2 v^2)\:
\Phi\Big(q,\frac{\xi^2}{\xi^2 {+} \widebar\mu(q)^2 v^2}  \Big) -\xi^2
\Phi(q, 1), 
\]
and with $\wt\calR_0(q,v)=\widebar\calR_{-\rmD\calE(q)}(q,v)$ the
gradient-flow equation \eqref{eq:WiDi.6}  takes the form 
\[
0= 2\widebar\mu(q)^2\dot q\,
\Psi\Big(q,\frac{\rmD\calE(q)^2}{\rmD\calE(q)^2{+}\widebar\mu(q)^2 \dot q{}^2}
\Big)  + \rmD\calE(q), \text{ where } \Psi(q,s)=\Phi(q,s)-s \pl_s\Phi(q,s).
\]
Using $\pl_s\Phi(q,1/2)=0$, we have $\Psi(q,1/2)=\Phi(q,1/2)=1/(2\widebar\mu(q))$, and
conclude that \eqref{eq:WiDi.6} is indeed equivalent to
\eqref{eq:WiDi.effEq}.

Certainly this form of the equation involving the nonlinear kinetic
relation 
\[
v\mapsto \xi=\pl_v \calM_0(q,v,{-}\rmD\calE(q)) = 2\widebar\mu(q)^2 v\, 
\Psi\Big(\frac{\rmD\calE(q)^2}{\rmD\calE(q)^2{+}\widebar\mu(q)^2 v^2}
\Big)
\] 
is `unhealthy' in the sense discussed above; in particular, it is ``less natural''
than the effective equation \eqref{eq:WiDi.effEq} featuring the simple
linear kinetic relation $v \mapsto \xi=\widebar\mu(q)v$.

\subsection{Proof of simple and \contactEDP}
\label{su:ProofWiggDiss}

 Here we prove the EDP convergences stated above. 

\begin{proof}[Proof of Theorem \ref{thm:WiggDiss}.] The tilted dissipation functional has the form 
\begin{align*}
& \CCC \mfD^T_\eps(q, \calF) \EEE = \int_0^T 
  \calN_\eps(q,\dot q,-\rmD \calF(q))\dd t \quad\text{with}\quad 
  \calN_\eps(q,v,\eta) = \calR_\eps(q,v) + \calR^*_\eps (q,\eta{-}\rmD\calE(q)).
\end{align*}
Hence, we obtain the special form 
\[
\calN_\eps(q,v,\eta)= \wh \calN(q,q/\eps, v, \eta{-}\rmD\calE(q)) \quad\text{with}\quad
\wh\calN(q,y,v,\xi)=\frac{ \mu(q,y)}2 v^2 + \frac{\xi^2}{2 \mu (q,y)}. 
\]
The $\Gamma$-limit \CCC $\mfD^T_0(\cdot, \calF)$ of $\mfD^T_\eps(\cdot, \calF)$
\EEE was calculated in~\cite[Thm.\,2.4]{DoFrMi19GSWE} by slightly generalizing
the results in \cite{Brai02GCB}. Indeed, our integrand $\wh\calN$ satisfies
exactly the same assumptions as $N$ in \cite[Eqn.\,(3,3)]{DoFrMi19GSWE}; thus
the approach there (see Prop.\ 3.6 and 3.7) can be used on our situation again.
We arrive at
\[
 \CCC \mfD^T_0(q, \calF) \EEE =\int_0^T \!\calN_0(q,\dot q,\eta)\dd t
 \quad\text{with}\quad
 \calN_0(q,v,\eta)=\calM_0(q,\dot q,\eta{-}\rmD\calE(q)),
\]
where the effective dissipation structure $\calM_0$ is given by
homogenization, namely 
\begin{subequations}
\label{eq:WiDi.M0}
\begin{align}
\calM_0(q,v,\xi)&=\inf\Bigset{\int_{s=0}^1 \wh \calN(q,z(s),vz'(s),\xi) \dd s}{
z\in \bfH^1_v }\\
\label{eq:WiDi.M0.a}
&=\inf\Bigset{\int_{s=0}^1 \!\!\Big( \frac{\mu(q,z(s)\big(vz'(s)\big) ^2}2 +
  \frac{\xi^2}{2\mu(q,z(s))}\Big) \dd s}{
z\in \bfH^1_v }\\
\label{eq:WiDi.M0.b}
&= \inf\Bigset{\int_{y=0}^1 \!\!\Big(\frac{\mu(q,y)v^2}{2b(y)}+
  \frac{b(y)\xi^2}{2\mu(q,y)}\Big) \dd y }{b(y)>0,\ \int_0^1 b(y)\dd y =1},
\end{align}
\end{subequations} 
where $\bfH^1_v\coloneqq\set{z\in \rmH^1({]0,1[})}{ z(1)= z(0)+\sign(v)}$. 
As in \cite{DoFrMi19GSWE}, this result strongly depends on the
1-periodicity of $\mu(q,\cdot)$ and on the fact that $y=q/\eps$ is a
scalar variable.  

The first observation is that $\calM_0$ is not given by a dual pair
$\calR_\eff {\oplus} \calR_\eff^*$. For this, we use that
$\calM_0(q,\cdot,\cdot)$ can be evaluated explicitly on the two axes,
namely
\begin{subequations}
\begin{align}
\label{eq:M0.muMax}
\calM_0(q,0,\xi)&= \frac{1}{\mu_\mafo{max}(q)} \xi^2 \quad\text{with}\quad
\mu_\mafo{max}(q)\coloneqq\max\set{\mu(q,y)}{y\in [0,1]} ,\\
\label{eq:M0.mu2}
\calM_0(q,v,0)&= \frac{\mu_{1/2}(q)}2 v^2 \quad\text{with}\quad \mu_{1/2}(q) 
 \coloneqq\Big( \int_0^1 \sqrt{\mu(q,y)}\dd y \Big)^2.
\end{align}
\end{subequations} 
The first result is seen via \eqref{eq:WiDi.M0.b} by concentrating $b$
near maximizers of $\mu(q,\cdot)$. The second follows from
\eqref{eq:WiDi.M0.a} by minimizing $\int_0^1\mu(z)z'{}^2\dd y $ subject
to $z(1)=z(0){+}1$, which leads to $\mu_{1/2}(q)$ as given above. 

If $\mu(q,\cdot)$ is not constant we have $\mu_{1/2} (q)<
\mu_\mafo{max}(q)$, so that there is no tilt-EDP convergence.

Clearly, we have the lower bound $\calM_0(q,v,\xi)\geq \xi v$, which
follows from the lower bound
\begin{equation}
  \label{eq:WiDi.M0lower}
  \frac{\mu(q,z(s))\big(vz'(s)\big) ^2}2 +
  \frac{\xi^2}{2\mu(q,z(s))} \geq  \lvert v\rvert z'(s) \xi
\end{equation}
for the integrand in \eqref{eq:WiDi.M0.a} (where equality holds if and
 only if $\mu(q,z(s))\lvert v\rvert z'(s)=\xi$) and integration over $s\in
[0,1]$ using the boundary condition for $z$. 

The contact set $\sfC_{\calM_0}(q)$, defined similarly to~\eqref{eq:l:pdmap:CGR},
\[
\calC_{\calM_0}(q) \coloneqq \bigl\{(v,\xi): \calM_0(q,v,\xi) = 
 \BBB \langle \xi, v \rangle \EEE \bigr\},
\]
can be constructed as follows. For $v=0$ we have to solve
$\calM_0(q,0,\xi)=\xi\, 0=0$, which gives $\xi=0$. For $v\neq 0$ we
can use \eqref{eq:WiDi.M0.a}, where now by coercivity a minimizer $Z
\in \bfH^1_v$ exists. On account of the contact condition 
\[
\calM_0(q,v,\xi)=\int_0^1 \Big( \frac{\mu(q,Z(s))\big(v Z'(s)\big)^2}2 +
\frac{\xi^2}{2\mu(q,Z(s))} \Big) \dd s = \xi v = \int_0^1 \lvert v\rvert Z'(s)
\xi \dd s,
\]
and by the lower estimate \eqref{eq:WiDi.M0lower}, we conclude that $Z$
must satisfy $\mu(q,Z(s))\lvert v\rvert Z'(s)=\xi$ for a.a.\ $s\in
[0,1]$. Integrating over $s$, we find $v \,\widebar\mu(q)=\xi$, and the contact
set reads 
\[
 \sfC_{\calM_0}(q)=\set{(v,\xi)\in \R^2}{ \calM_0(q,v,\xi)=\xi v} =
 \set{ (v,\widebar\mu(q)v)} {v\in \R},
\]
This gives the desired linear kinetic relation and the quadratic
effective dissipation potential $\calR_\eff(q,v)=\frac{\widebar\mu(q)}2
v^2$. 

By the abstract result in Lemma~\ref{l:post-def-three-convergences} we
have also simple EDP convergence with the dissipation potential 
$ 
\wt\calR_0(q,v)\coloneqq\calM_0(q,v,{-}\rmD\calE(q)) -
\calM_0(q,0,{-}\rmD\calE(q)).
$  
Because we have shown that $\calM_0$ is not of the form
$\Phi(q,v)+\Psi(q,\xi)$, we conclude that $\wt\calR_0(q,\cdot)$ depends
on $\calE$. Moreover, $v\mapsto \wt\calR_0(q,v)$ is not quadratic.
\end{proof}

\subsection{Comments}
\label{su:CommWiggDiss}

We discuss a few specific points for this model that complement the
results in \cite{DoFrMi19GSWE} for the wiggly-energy model to be
discussed in the following section. 

\begin{remark}[Validity of the conjecture $\calM_0\leq
  \calR_\eff{\oplus}\calR_\eff^*$, {see \cite[Sec.\,5.4]{DoFrMi19GSWE}}] 
In our present example, we can easily show that the sum of the dual pair
$\calR_\eff{\oplus}\calR_\eff^*$ is always bigger than $\calM_0$. 
To see this, we insert a special competitor into the characterization
\eqref{eq:WiDi.M0.b}. The choice $\wh b\colon y \mapsto \mu(q,y)/\widebar\mu(q)$ is
admissible, and we find
\[
\calM_0(q,v,\xi)\leq \int_0^1 \!\!\Big(\frac{\mu(q,y)v^2}{2\wh b(y)}+
  \frac{\wh b(y)\xi^2}{2\mu(q,y)}\Big) \dd y = \frac{\widebar\mu(q)v^2}2+
  \frac{\xi^2}{2\widebar\mu(q)} = \calR_\eff(q,v)+\calR_\eff^*(q,\xi).
\]
The missing energy $\calR_\eff(q,v)+\calR_\eff^*(q,\xi)
-\calM_0(q,v,\xi)\geq 0$ can be understood thermodynamically by
the relaxation discussed in Section \ref{sec:understanding}.  
\end{remark}

\begin{remark}[Bipotential and non-convexity]
Clearly, $\calM_0(q,\cdot,\xi)$ is convex. Following the ideas in
\cite{DoFrMi19GSWE} it is possible to show that $\calM_0(q,v,\cdot)$
is convex as well. Indeed, neglecting the dependence on $q$, assuming
$v>0$, we define 
$\calW(\xi,h)=\int_0^1\sqrt{\xi^2{+}2h\mu(y)} \dd y$ and find
\[
\calM_0(v,\xi)= v \calW(\xi,H(v,\xi))-H(v,\xi), \text{ where }
1=v\,\rmD_h \calW (\xi,H(v,\xi)),
\] 
i.e.\ $h=H(v,\xi)$ is implicitly defined by the last relation.  Using
the implicit function theorem one finds $\rmD_\xi^2\calM_0(v,\xi)=
v\big(\rmD_\xi^2\calW-(\rmD_\xi\rmD_h \calW)^2/\rmD_h^2\calW
\big)\rvert_{h=H(v,\xi)}$ (cf.\ \cite[Lem.\,4.13(D)]{DoFrMi19GSWE},
  which is non-negative because $\calW$ is convex in $\xi$ and
concave in $h$. 

However, in general $\calM_0$ is not jointly convex in $v$ and
$\xi$. This can be seen by evaluating $\calM_0$ at three points:
\[
\calM_0(v_0,0)=\frac{\mu_{1/2}\, v_0^2}2, \quad
\calM_0(0,\widebar\mu v_0)=\frac{(\widebar\mu v_0)^2}{2\mu_\mafo{max}}, \quad 
\calM_0(\tfrac12 v_0,\tfrac12\widebar\mu v_0)= \frac{\widebar\mu v_0^2}{4},
\]
where the last relation uses that the point lies on the contact
set. As this point also lies in the middle of the first two, convexity
can only hold if we have  
\[
\frac{\widebar\mu \,v_0^2}{4} \leq \frac12\Big(\frac{\mu_{1/2} \,v_0^2}2 +
\frac{(\widebar\mu \,v_0)^2}{2\mu_\mafo{max}}\Big) \quad \Longleftrightarrow
\quad \widebar\mu \leq \mu_{1/2} + (\widebar\mu)^2/\mu_\mafo{max} . 
\]
Choosing $\mu(y)=\alpha +\lvert 2y{-}1\rvert^\gamma$ for $y\in [0,1]$, where
$\alpha$ is sufficiently small and $\gamma$ sufficiently big (e.g.\
$\gamma\geq 3$), we find a contradiction to convexity.
\end{remark}

\begin{remark}[Convergence of Riemannian distance]
It is interesting to note that we may look at the gradient system
$(\R,\calE,\calR_\eps)$ also as a metric gradient
system $(\R,\calE,\calD_\eps)$, where the associated distances
$\calD_\eps\colon\R\ti \R \to {[0,\infty[}$ are 
defined via   
\begin{align*}
\calD_\eps(q_0,q_1)^2 &\coloneqq \inf\Bigset{\int_0^1 2\calR_\eps(q,\dot q)\dd
s }{ q(0)=q_0,\;q(1)=q_1, \;q\in \rmH^1({]0,1[}) } \\
&=
\Big\lvert\int_{q_0}^{q_1} \sqrt{\mu(q,q/\eps)}\: \dd q \Big\rvert^2.
\end{align*}
Obviously, the distances $\calD_\eps$ converge to the limit distance
$\calD_0$ given by 
\[
\calD_0(q_0,q_1)^2 = \Big\lvert\int_{q_0}^{q_1} \int_0^1 \sqrt{\mu(q,y)}\,\dd
y \dd q \Big\rvert^2 =  \Big\lvert\int_{q_0}^{q_1}  \sqrt{\mu_{1/2}(q)}\, \dd q \Big\rvert^2
\]
with $\mu_{1/2}(q)$ from \eqref{eq:M0.mu2}. ($(\R,\calD_\eps)$ converges
to $(\R,\calD_0)$ in the Gromov-Hausdorff sense.)

For non-constant $\mu(q,\cdot)$ we have $\mu_{1/2}(q) < \widebar\mu(q)$, and
conclude that the limit $\calD_0$ of the distances $\calD_\eps$ is
different from the effective distance $\calD_\eff$  obtained from
$\calR_\eff$, namely
\[
\calD_\eff(q_0,q_1)^2 =   \Big\lvert\int_{q_0}^{q_1}  \sqrt{\widebar\mu(q)} \dd
q \Big\rvert^2
=\Big\lvert\int_{q_0}^{q_1}\Big( \int_0^1 \mu(q,y)\dd y\Big)^{1/2} \dd q \Big\rvert^2.
\]
Hence, predictions using $\calD_0$ instead of $\calD_\eff$ would give
too little dissipation. In particular, the general theory from
\cite{Sava11?GFDS} does not apply, because $\calE$  is not uniformly
geodesically $\lambda$-convex for all $\calD_\eps$. 
\end{remark}

\section{The wiggly-energy example from~\cite{DoFrMi19GSWE}}
\label{se:DFM}

In \cite{DoFrMi19GSWE} a wiggly-energy model was considered, where
the energy of the gradient system $(\R,\calE_\eps,\calR)$ has the form 
\begin{equation}
  \label{eq:WigglyGS}
  \calE_\eps(t,q)=\calU(q)+\eps \calW(q,q/\eps)-\ell(t)q .
\end{equation}
It was shown that the systems converge, in the sense of contact-EDP
convergence, to a limit system $(\R,\calE_0,\calR_\eff)$, where
$\calE_0(t,q)=\calU(q)-\ell(t)q$ and the
effective dissipation potential strongly depends on the wiggly part
$\calW$. 

The following theorem summarizes the results in \cite{DoFrMi19GSWE}
that show that $(\R,\calE_\eps,\calR)$ converges in the sense of
\contactEDP, but not in the stronger sense of
tilt-EDP convergence. Here the loading $\ell$ acts in a natural way as
a time-dependent tilt. Indeed, the notion of tilt-EDP convergence was
developed in \cite{DoFrMi19GSWE} while studying this model.

To obtain an explicit result, we restrict ourselves to a special case of
the much more general result in \cite{DoFrMi19GSWE} and assume the
following explicit expressions:
\begin{equation}
\label{eq:WiggAssump}
\calW(q,y)=A(q)\cos y \quad \text{and} \quad
\calR(q,v)=\frac{\varrho(q)}2 v^2 \quad \text{with }A(q),\varrho(q)>0,
\end{equation}
where $A,\varrho \in \rmC^0(\R)$ have a positive lower and upper
bound. 

\begin{theorem} 
  \label{thm:wiggly-energy-red-lim}
Consider the family $(\R,\calE_\eps,\calR)$ of gradient systems given 
through \eqref{eq:WigglyGS} and \eqref{eq:WiggAssump}. Then, the
following statements hold:

(A) The dissipation functionals $\mfD_\eps^T$ defined via \eqref{eq:def.mfD}
weakly $\Gamma$-converge in $\rmH^1([0,T])$ to $\mfD_0^T\colon q \mapsto
\int_0^T \calM_0 (q, \dot q, ,\ell(t){-}\rmD\calU(q))\dd t$ with
\
\begin{equation}
  \label{eq:Wiggly.calM}
  \calM_0(q,v,\xi) = \inf\Bigset{\int_0^1\Big(\frac{\varrho(q)}2
  \big(vz'(s)\big)^2+ \frac{\big(\xi{+}A(q)\sin z(s)\big)^2} {2\varrho(q)}
  \Big)  \dd s }{z \in H^1_v}, 
\end{equation} 
where $H^1_v=\set{z\in \rmH^1([0,1])}{ z(1)=z(0)+\sign(v)}$. 

(B) $\calM_0$ satisfies $\calM_0(q,v,\xi)\geq v\xi$ for all $q,v,\xi \in
\R $, and 
\[
\calM_0(q,v,\xi)=v\xi \quad \Longleftrightarrow \quad \varrho(q) v = \sign(\xi)
\sqrt{\max\{ \xi^2{-}A(q)^2,0\}}.  
\]

(C) We have the \contactEDP\  $(\R,\calE_\eps,\calR)
\coEDPto (\R,\calE_0,\calR_\eff)$ with 
\[
\calE_0(t,q)=\calU(q)-\ell(t) q \quad \text{and} \quad
\calR_\eff(v)=\int_0^{\lvert v\rvert} \sqrt{ A(q)^2{+}(\varrho(q) w)^2}\: \dd w. 
\]

(D) Tilt-EDP convergence does not hold. 
\end{theorem}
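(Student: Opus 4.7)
The plan is to follow the strategy of \cite{DoFrMi19GSWE}, handling the four parts in turn and exploiting the abstract framework of Section~\ref{sec:GS-convergence}.

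\emph{Part (A)}: First I would compute
$\rmD\calE_\eps(t,q) = \rmD\calU(q) - \ell(t) + \eps A'(q)\cos(q/\eps) - A(q)\sin(q/\eps)$
and absorb the lower-order term $\eps A'(q)\cos(q/\eps)$ by a perturbation argument, reducing the tilted dissipation integrand to $\wh\calN(q, q/\eps, \dot q, \xi)$ with $\xi = \ell(t) - \rmD\calU(q) - \rmD\calF(q)$ and $\wh\calN$ $2\pi$-periodic in its second argument. Standard one-dimensional periodic-homogenization theory (\cite{Brai02GCB}, as adapted in \cite[Prop.~3.6--3.7]{DoFrMi19GSWE} to allow an external force) then yields the cell-formula $\Gamma$-limit \eqref{eq:Wiggly.calM}. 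Crucially, the tilt $\calF$ enters only as an additive shift of $\xi$, so a single cell problem produces the limit for every $\calF$, giving the uniform structure required by Assumption~\ref{ass:basic}.

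\emph{Parts (B) and (C)}: The bound $\calM_0 \geq v\xi$ is immediate from Assumption~\ref{ass:basic}(\ref{ass:EDPconv:N-Young}); it also follows pointwise by rewriting the cell integrand as $\calR(q, vz'(s)) + \calR^*(q, \xi + A(q)\sin z(s))$ and applying Fenchel--Young. For the equality case, pointwise tightness of Fenchel--Young forces the autonomous separable ODE $\varrho(q) v z'(s) = \xi + A(q)\sin z(s)$; its right-hand side is sign-definite iff $|\xi| > A(q)$, in which case a monotone solution exists, and the compatibility conditions $\int_0^1 ds = 1$ together with the vanishing of the boundary-$\cos$ contribution produce $\varrho(q) v = \sign(\xi)\sqrt{\xi^2 - A(q)^2}$, while for $|\xi| \leq A(q)$ only $v = 0$ is admissible. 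Part (C) is then immediate: differentiating $\calR_\eff(q,v) = \int_0^{|v|}\sqrt{A(q)^2 + \varrho(q)^2 w^2}\,\rmd w$ gives a subdifferential whose graph equals the contact set computed in (B), so $\calR_\eff$ is the contact-equivalent dissipation potential required by Definition~\ref{d:three-convergences}.

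\emph{Part (D)}: By Lemmas~\ref{l:pdmap} and~\ref{l:post-def-three-convergences}, tilt-EDP convergence would force the identity $\calM_0(q, v, \xi) \equiv \calR_\eff(q, v) + \calR_\eff^*(q, \xi)$. I would refute this on the slice $v = 0$. Since $\sign(0) = 0$ admits constant $z$ in \eqref{eq:Wiggly.calM}, explicit minimization gives
\[
\calM_0(q, 0, \xi) = \frac{(\max\{|\xi| - A(q), 0\})^2}{2\varrho(q)},
\]
which vanishes quadratically as $|\xi|\downarrow A(q)$. An elementary Legendre computation (substitution $\varrho(q)w = A(q)\sinh\tau$) yields, for $|\xi| > A(q)$,
\[
\calR_\eff^*(q, \xi) = \frac{|\xi|\sqrt{\xi^2 - A(q)^2}}{2\varrho(q)} - \frac{A(q)^2}{2\varrho(q)}\ln\frac{|\xi| + \sqrt{\xi^2 - A(q)^2}}{A(q)},
\]
which expands as $\frac{2\sqrt{2}\,A(q)^2}{3\varrho(q)}\bigl(|\xi|/A(q) - 1\bigr)^{3/2}$ to leading order near $|\xi| = A(q)$. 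The differing algebraic orders ($3/2$ versus $2$) refute the identity, so tilt-EDP fails.

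The main obstacle is the rigorous one-dimensional homogenization in (A), especially bookkeeping the tilt-dependence so that one cell problem handles every $\calF$; the ODE/elliptic-integral analysis underlying (B) is essentially classical, and the refutation in (D) is a short explicit computation once (B) and (C) have pinned down $\calR_\eff$ explicitly.
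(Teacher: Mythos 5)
Your proposal is correct and follows essentially the same route as the paper, which itself only sketches this theorem by citing the homogenization results of \cite{DoFrMi19GSWE} and the template of Section~\ref{se:WigglyDiss}: cell-formula $\Gamma$-limit with the tilt entering as an additive shift of $\xi$, contact set via pointwise Fenchel--Young equality in the cell problem, and refutation of tilt-EDP by an explicit mismatch on a coordinate axis. Your added explicit computation of $\calR_\eff^*$ and the $3/2$-versus-$2$ asymptotics near $\lvert\xi\rvert=A(q)$ is a concrete instantiation of the paper's ``evaluate $\calM_0$ on the axes'' strategy and checks out.
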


\newcommand{\WDM}{\text{(\ref{se:WigglyDiss})}}%
The above theorem can be derived as for the wiggly-dissipation model
$(\R,\calE^\WDM, \calR^\WDM_\eps)$ discussed before, where
``${}^\WDM$'' indicates the previous section.  However, there is a
major difference in the two results.

In both cases we start with a quadratic dissipation potential
$\calR_\eps^\WDM(q,v)=\mu(q,q/\eps)v^2/2$ and $\calR(v)=\varrho(q) v^2/2$.  In
the previous section the effective dissipation potential
$\calR^\WDM_\eff$ reads $v \mapsto
\widebar\mu(q) v^2/2$ and, hence, is still quadratic and solely depends on
the family $\calR^\WDM_\eps$. In contrast, in the present case 
$\calR_\eff$ is no longer quadratic, and explicitly depends on the
amplitude $A(q)$, which is a microscopic information stemming from the
family $(\calE_\eps)_{\eps>0}$. Thus, we see that EDP convergence really involves the pair $(\calE_\eps,\calR_\eps)$ and cannot be characterized by the convergence of the family
$(\calR_\eps)_{\eps>0}$ alone.

\section{Understanding the two new convergence concepts}
\label{sec:understanding}

The new convergence concepts of tilt- and \contactEDP\  are based upon simultaneous convergence of all tilted versions of the gradient system. In this section we explain why this choice is successful in deriving effective kinetic relations, without falling prey to the same problem as simple EDP convergence. This will also allow us to explain in a different manner why tilt-convergence is not sufficient, and why the contact version can be considered `more natural'. The discussion in this section is necessarily formal.

\medskip

Two observations are central:

\smallskip

\emph{Observation 1: Gradient-flow solutions solve a  Hamiltonian system.} Solutions of the gradient-flow system $(\bfQ,\calE,\calR)$ can be obtained as solutions of the global minimization problem
\[
\inf \bigset{\calE(q(T)) - \calE(q(0) + \mfD(q)}{ q(0) = q^0 }, 
\qquad q(0)=q^0 \text{ given},
\]
and the minimal value is $0$. 

At the same time, stationary points of the functional above are
solutions of a Hamiltonian system. In the simple case $\bfQ=\R^m$
and $\calR(q,v) = \frac 12\langle \bbG v,v\rangle $, for
instance, the stationary points satisfy the Euler-Lagrange
equation 
\begin{equation}
\label{eq:HS}
 \bbG \ddot q = \rmD^2\calE(q) \bbG^{-1}\rmD\calE(q) . 
\end{equation}
It may seem paradoxical that gradient-flow solutions are also
solutions of a Hamiltonian system. In this example it is easy to
recognize that solutions of the gradient flow $\bbG \dot q =
-\rmD\calE(q)$ also solve~\eqref{eq:HS}, by calculating
\[
\bbG \ddot q = -\frac d{dt} \rmD\calE(q) = -\rmD^2\calE(q)\dot q =
\rmD^2\calE (q)\bbG^{-1} \rmD\calE(q).
\]
In general, the gradient-flow solutions form a strict subset of all
solutions of the Hamiltonian system; this subset is automatically
reached when the functional is minimized without constraint on the end
point $q(T)$. For minimization with different conditions on the end
point, however, minimizers will still be solutions of the Hamiltonian
system, but no longer gradient-flow solutions.

\medskip

\emph{Observation 2: The limit $\calM_0$ is obtained by relaxation.}
In the limit $\e\to0$ in the example in the previous section, the
limiting functional $\calM_0(q,v,\xi)$ is obtained through
\emph{relaxation}. This is best recognized in the
formulas~\eqref{eq:WiDi.M0}, specifically~\eqref{eq:WiDi.M0.a}:
$\calM_0$ is defined through a minimization of rescaled versions of
$\calR_\e$ and $\calR_\e^*$, for a given value of $\xi$, and under a
constraint on the curves $z$. Because of this constraint, the final
value $z(1)$ is not free, and consequently the minimization need not
result in a gradient-flow solution $z$. The \emph{non}-gradient-flow
nature of $z$ therefore is a consequence of the multi-scale
construction of $\calM_0$, in which we impose a fixed macroscopic
rate $v$, and minimize over microscopic degrees of freedom under that
constraint.

However, when $v$ and $\xi$ are such that $\calM_0(q,v,\xi) = \BBB \langle 
\xi, v \rangle \EEE $, solutions of the minimization problem \emph{are}
gradient-flow solutions (see the discussion
following~\eqref{eq:WiDi.M0lower}). We therefore have the following situation: 
\begin{enumerate}
\item For general $v$ and $\xi$ the value of $\calM_0$ and the
  corresponding optimizer $z$ may not be relevant as representations
  of the limit $\e\to0$ of gradient-flow solutions $q_\e$.
\item For those $v$ and $\xi$ satisfying contact, i.e.
  $\calM_0(q,v,\xi) = \BBB \langle \xi, v \rangle \EEE $, optimizers $z$ are of
  gradient-flow type, and may represent the behavior of solutions $q_\e$.
\end{enumerate}

This explains why \contactEDP\  is a natural choice:
it connects the relaxation $\calM_0$ with a dissipation potential
$\calR_\eff$ exactly at those values of $v$ and $\xi$ \CCC where the
\emph{microscopic} optimizers defining $\calM_0$ \EEE are 
of the gradient-flow type. In fact,
Lemma~\ref{l:post-def-three-convergences} implies that if simple
EDP convergence yields a limiting dissipation potential that does
depend on the force---this is exactly the case of a problematic
kinetic relation---then tilt-EDP convergence \emph{cannot} hold.

\section{Tilting in Markov processes}
\label{se:MarkovTilting}

Many gradient flows arise from the large deviations of Markov
processes, and the tilting of the previous sections has a natural
counterpart in this context. In this section we explore this
connection.

\subsection{Gradient flows and large deviations of Markov processes}
\label{se:GF-LDP}

In~\cite{MiPeRe14RGFL} we showed the following general
result: Suppose that $Q^n$ is a sequence of continuous-time Markov processes in
$\bfQ$ that are reversible with respect to their stationary measures
$\mu^n\in \ProbMeas(\bfQ)$. Assume that the following two
large-deviation principles hold:
\begin{enumerate}
\item The invariant measures $\mu^n$ satisfy a large-deviation
  principle with rate function $S\colon\bfQ\to[0,\infty]$, i.e.
\[
\mu^n \sim \exp\bigl(-nS\bigr), \qquad\text{as }n\to\infty;
\]
\item The time courses of $Q^n$ satisfy a large-deviation principle in
  $\rmC([0,T];\bfQ)$ with rate function $I\colon\rmC([0,T];\bfQ)\to[0,\infty]$,
  i.e.
\begin{equation}
\label{ldp:time-courses}
\Prob\bigl(Q^n \approx q \,\big|\, 
Q^n_0 \approx q(0)\bigr) \sim \exp\bigl(-nI(q)\bigr),
\qquad \text{as }n\to\infty.
\end{equation}
\end{enumerate}
Then $I$ can be written as 
\begin{equation}
\label{eq:I-GGS}
I(q) = \tfrac12 S(q(T)) - \tfrac12 S(q(0)) + \int_0^T \bigl[ \calR(q,\dot q) + \calR^*\bigl(q,-\tfrac12 \rmD S(q)\bigr)\bigr]\, dt,
\end{equation}
for some symmetric dissipation potential $\calR$.
This result can be interpreted as follows. 
\begin{itemize}
\item The functional $I$ is non-negative, and with probability one a
  sequence of realizations $Q^n$ of the stochastic process converges
  (along subsequences) to a curve $q$ satisfying $I(q)=0$. The
  property $I(q)=0$ therefore identifies the limiting behavior of the
  stochastic process $Q^n$.
\item As discussed in Section \ref{sec:understanding}, curves $q$
  satisfying $I(q)=0$ are solutions of the gradient-flow equation
  $\dot q = \rmD_\xi \calR^*(q,-\tfrac12 \rmD S(q))$; therefore there
  is a one-to-one mapping between the functional $I$ and the gradient
  system $(\bfQ,\tfrac12 S,\calR)$.
\end{itemize}

Over the last few years, a number of well-known gradient systems has
been recognized as arising in this way. For instance, the `diffusion'
or `heat' equation $\partial_t \rho = \Delta\rho$ arises as the limit
of independent (`diffusing') Brownian particles~\cite{ADPZ11LDPW,
  AdamsDirrPeletierZimmer13}, with the well-known entropic
Otto-Wasserstein gradient structure (cf.\ \cite{Otto01GDEE}
and our Section \ref{se:Membrane}); as the limit of the simple
symmetric exclusion process describing particles hopping on a
lattice~\cite{AdamsDirrPeletierZimmer13}, with a gradient structure of
a mixing entropy and a modified Wasserstein distance; and as the limit
of oscillators that exchange energy (`heat')~\cite{PeReVa14LDSH}, with
a gradient structure consisting of an alternative logarithmic entropy
and again a modified Wasserstein distance. Rate-independent systems
arise from taking further limits~\cite{BonaschiPeletier16}, and
extensions to GENERIC have also been
recognized~\cite{DuongPeletierZimmer13}.

\medskip

In the next two sections we study how \emph{tilting} enters this structure.

\subsection{The static case}

We first consider a non-dynamic case: $X^n$ is a random variable in
$\bfQ$, with law $\mu^n\in \ProbMeas(\bfQ)$. One example of this
arises in the stochastic-process example above: if the initial state
$Q_0^n$ is drawn from the invariant measure $\mu^n$ of the process,
then $Q^n_t$ also has law $\mu^n$ for all time $t\geq 0$, and $X^n \coloneqq
Q^n_{t}$ for fixed $t$ therefore is an example of the situation we are
considering.

In previous sections we have implicitly used a property that is well
known in the context of energetic modeling: \emph{Energies are
  additive.} More precisely, when combining energies that arise from
different phenomena, the energy of the total system is simply the sum
of the individual energies. In this way, given an energy $\calE$, the
perturbed energy $\calE +\calF$ arises naturally as the sum of the
original energy $\calE$ and the external potential $\calF$.

We now connect this additivity property with tilting of random
variables. In the stochastic context, \emph{tilting} a sequence of
random variables $X^n$ means considering a new sequence $X^{\calF,n}$
with law
\begin{equation}
\label{eq:tilted-law-eq}
\mu^{\calF,n}(A) \coloneqq \frac{1}{Z_n} \int_A \ee^{-n\calF(q)} \,\mu^n(dq)
\quad \text{with } Z_n\coloneqq\int_\bfQ \ee^{-n\calF(q)} \,\mu^n(dq).
\end{equation}
This has the effect of giving higher probability to $q\in \bfQ$ for
which $\calF(q)$ is smaller: it `tilts' the distribution in the
direction of lower values of $\calF$.

If $\mu^n$ satisfies a large-deviation principle with rate function
$S$, as in the case of the stochastic process above, and satisfies a
tail condition, then Varadhan's and Bryc's Lemmas (see
e.g.~\cite[Th.~II.7.2]{Ellis85}) imply that $\mu^{\calF,n}$ also
satisfies a large-deviation principle, with `tilted' rate function
$S^\calF$:
\[
\mu^{\calF,n} \sim \exp \bigl(-n S^\calF\bigr) , \qquad 
S^\calF(q) \coloneqq S(q) +\calF(q) + \text{constant},
\]
where the constant is chosen such that $\inf S^\calF = 0$.  This
result can be understood by remarking that from $\mu^n\sim \ee^{-nS}$
we find
\[
\ee^{-n\calF} \mu^n \sim \ee^{-n\calF - nS},
\]
which leads to the first two terms in $S^{\calF}$; the constant in
$S^\calF$ arises from the normalization constant
in~\eqref{eq:tilted-law-eq}.

The additivity property for energies thus has a counterpart for random
variables in the form of the tilting of~\eqref{eq:tilted-law-eq}; the
two concepts, addition of energies and tilting of random variables,
coincide in the large-deviation limit $n\to\infty$.

\subsection{The dynamic case}

In the setup in the previous sections, not only are energies assumed
to be additive, but also the dissipation function $\calR$ is assumed
to be independent of the tilting: the addition of $\calF$ changes the
energy but not the dissipation. This assumption has its origin in the
modeling background of mechanical gradient flows, in which the
dissipation functional $\calR$ defines the force-to-rate relationship
$\rmD_\xi \calR^*(q,\cdot)$, which is assumed to be independent of the
driving energy.

We now show that the same independence occurs naturally for gradient
systems that arise in the context of Markov processes. As in
Section~\ref{se:GF-LDP}, we consider a Markov process $Q^n$ in $\bfQ$
with generator $L^n$. (For instance, if $Q^n$ solves the
stochastic differential equation in $\R^d$,
\[
\rmd Q^n_t = b^n(Q^n_t)\, \rmd t + \sigma^n(Q^n_t)\, \rmd W_t,
\]
then 
\[
(L^nf)(q) = b^n(q)\nabla f(q) + \frac12 \sigma^n(q) 
\sigma^n(q)^T \Delta f(q). \qquad )
\]

In the dynamic context, tilting can be written in terms of the
generator through the Fleming-Sheu logarithmic
transform~\cite{Fleming82,Sheu85},
\[
(L^{\calF,n}f)(q) \coloneqq \ee^{n\calF(q)} L^n (\ee^{-n\calF}f)(q) -
\ee^{n\calF(q)}f(q)(L^n\ee^{-n\calF})(q). 
\]
Let $Q^{\calF,n}$ be generated by $L^{\calF,n}$; if $Q^n$ has
invariant measure $\mu^n$, then $Q^{\calF,n}$ has the invariant measure
$\frac1{Z_n}\, \ee^{-n\calF}\mu^n$ with $Z_n\coloneqq \int_\bfQ
\ee^{-n\calF}\dd \mu^n$. 

In the derivation of the characterization~\eqref{eq:I-GGS}, $\calR^*$
is found by taking the limit in a scaled version of $L^n$, as
follows. Define the \emph{nonlinear generator}
\[
(H^nf)(q) \coloneqq \frac1n \ee^{-nf(q)}(L^n \ee^{nf})(q), 
\]
and its limit, in a sense to be defined precisely
(see~\cite[Ch.~6,~7]{FengKurtz06}),
\[
(Hf)(q) \coloneqq \lim_{n\to\infty} H_nf(q).
\]
In a successful large-deviation result, the operator $H$ operates on
$f$ only through its derivative $\rmD f$, which allows us to identify
\[
Hf(q) = \mathcal H(q,\rmD f(q)).
\]
The dual dissipation function $\calR^*$ is then defined by 
\[
\calR^*(q,\xi) \coloneqq \mathcal H\big(q,\xi+\tfrac12 
\rmD S(q) \big) - \mathcal H\big(q,\tfrac12\rmD S(q) \big) .
\]

Given this structure, we can now show how tilting does not affect
$\calR^*$. If we replace $L^n$ by $L^{\calF,n}$ in this procedure,
then
\begin{align*}
(H^{\calF,n}f)(q) &\coloneqq \frac1n \ee^{-nf(q)}(L^{\calF,n} \ee^{nf})(q) \\
&= \frac1n \ee^{-nf(q)} \ee^{n\calF(q)} L^n (\ee^{-n\calF} \ee^{nf})(q)
- \frac1n \ee^{-nf(q)}\ee^{n\calF(q)} \ee^{nf(q)} L^n \ee^{-n\calF}(q)\\
&= H^n (f{-}\calF)(q) - H^n (-\calF)(q)\\
&\to H (f{-}\calF)(q) - H (-\calF)(q)\qquad \text{as }n\to\infty\\
&=\mathcal H(q,\rmD f(q) {-}\rmD \calF(q)) - \mathcal H(q,-\rmD\calF(q)).
\end{align*}
The dissipation potential $\calR^{\calF,*}$ associated with the large
deviations of the tilted process $Q^{\calF,n}$, with tilted
invariant-measure rate functional $S^\calF = S +\calF +
\mathrm{constant}$, then satisfies
\begin{align*}
  \calR^{\calF,*}(q,\xi) &= \Bigl[\mathcal H\bigl(q,\xi {+} \tfrac12
  \rmD S^\calF(q) {-}\rmD\calF(q)\bigr)
   - \mathcal H(q,-\rmD\calF(q)) \Bigr]\\
  &\qquad\qquad
  - \Bigl[\mathcal H\bigl(q,+\tfrac12 \rmD S^\calF(q)
  {-}\rmD\calF(q)\bigr) - \mathcal H(q,-\rmD\calF(q))\Bigr]\\ 
  &= \mathcal H\bigl(q,\xi {+} \tfrac12 \rmD S(q)\bigr) - \mathcal
  H\big(q,+\tfrac12 \rmD S(q)\big) \\
  &= \calR^*(q,\xi).
\end{align*}
In other words, tilting replaces the invariant-measure large-deviation
functional $S$ by $S^\calF = S+ \calF +\mathrm{constant}$, and leaves
$\calR$ untouched.

\bigskip

Summarizing, there is a strong analogy between the modification of
energies by addition, and the modification of stochastic processes by
tilting. In both cases the dissipation function is expected to be
unaffected; in the mechanical context this is a modeling postulate,
and in the stochastic context it is a consequence of the structure of
the tilting.

Regardless of whether the gradient-flow structure arises directly from
a modeling argument or indirectly through a large-deviation
principle, the behavior under modification of the energy is therefore
the same.

\section{Membrane as limit of thin layers}
\label{se:Membrane}

In this section we want to show that the concept can also be
successfully applied in partial differential equations. We present a
result that was formally derived in \cite[Sec.\,4]{LMPR17MOGG} and
rigorously proven in \cite{FreMie19?DKRF}. We also refer to
\cite{FreLie19?EDTS} for a related result on a diffusion equation in a
thin structure.

The underlying gradient-flow equation is the one-dimensional 
diffusion equation
\begin{align}
\label{eq:Diffusion}
\begin{aligned}
&\dot u = \pl_x \Big( a_\eps(x) \big( \pl_x u + u \, \pl_x V(x)\big)\Big)
\quad \text{in }\Omega\coloneqq{]{-}1,1[}, \\
&\pl_x u(t,x) + u(t,x) \, \pl_x V(x)=0 \quad \quad \, \text{at }
x = -1, 1.  
\end{aligned}
\end{align}
Defining the equilibrium density
\begin{equation}
  \label{eq:Memb.w0}
w_\eps(x) = \frac1{Z_\eps} \ee^{-V _\eps(x)} \quad \text{with }Z_\eps=\int_\Omega
\ee^{-V_\eps(x)} \dd x,
\end{equation}
we see that the diffusion equation is generated by the gradient
system $( \calP(\Omega),\calE,\calR^*_\eps)$ given by (with
$\lambda_\rmB (z) = z\log z - z+1$)
\[
\calE_\eps(u)=\int_\Omega \lambda_\rmB \big(u(x)/w_\eps(x)\big)
w_\eps(x)\dd x \quad \text{and} \quad \calR_\eps^*(u,\xi)=
\frac12\int_\Omega a_\eps(x) u(x) (\pl_x\xi(x))^2 \dd x,
\]
which is the entropic Otto-Wasserstein gradient structure from
\cite{Otto01GDEE}, but now with a spatially heterogeneous mobility
coefficient $a_\eps(x)$. 

The interesting phenomenon happens in the thin layer given by the small
interval $[0,\eps]$. In particular, we allow $a_\eps$ to depend non-trivially
on $x$ but keep the tilting potential $V_\eps$ independent of $\eps$, i.e.\
$V_\eps=V \in \rmC^1([-1,1])$, which leads to $w_\eps=w_0$ \BBB and
$Z_\eps=Z_0$. \EEE The energy functional $\calE=\calE_\eps$ is defined as the
relative Boltzmann entropy:
\begin{equation}
\label{eq:Membr-E}
\calE(u)=\int_\Omega \lambda_\rmB (u/w_0) w_0\dd x = \int_\Omega\big( 
\lambda_\rmB(u) + u \BBB (V {+} \log Z_0) -1\big) \dd x . \EEE
\end{equation}

For the diffusion coefficient $a_\eps$ we assume that there
are functions $a_*,a_+\in \rmC^1([0,1])$ and $a_-\in \rmC^1([-1,0])$
such that $a_*(x),a_+(x),a_-(-x) \geq \underline a>0 $ for all $x\in
[0,1]$, and
\begin{equation}\label{eq:Memb-3}
a_\eps(x) =
\begin{dcases*}
\ \ \; a_-(x) 	& for $x<0$, \\
\eps \, a_*(x/\eps) 	& for $x \in [0, \eps]$, \\
\ \ \;a_+(x) 	& for $x>\eps,$
\end{dcases*}
\end{equation}
i.e.\ the diffusion coefficient in the layer of width $\eps$ is
also of order $\eps$. Note that $a_\eps$ has jumps at $x=0$ and
$x=\eps$, 
while the potential $V$ is continuous on $\widebar\Omega=[-1,1]$. 

The major effort goes into the derivation of the effective dissipation
potential $\wh\calR_0$. We refer to \cite[Thm.\,4.1]{LMPR17MOGG}
for a relatively short, but formal derivation and to \cite{FreMie19?DKRF} for
the rigorous proof of the following result. 

\begin{theorem}\label{thm:Membr}
We have $(\calP(\Omega),\calE,\calR_\eps) \tiEDPto
(\calP(\Omega),\calE,\wh\calR_0)$, where $\wh\calR_0$ is given
by its Legendre dual as follows: 
\begin{align}
\wh\calR^*_0(u,\xi)&= \int_{-1}^0 \frac{a_-}2 (\pl_x\xi)^2 u \, \dd x
 + \int_0^1 \frac{a_+}2 (\pl_x\xi)^2 u \, \dd x + a_\eff \sqrt{u(0^-)u(0^+)}\:
\mathsf C^*\big( \xi(0^+){-}\xi(0^-)\big) 
 \nonumber
\\ 
  \label{eq:Membr.R*}
&\text{where } \ 
\mathsf C^*(\zeta)= 4 \cosh (\zeta/2) -4
 \ \text{ and } \ \frac1{a_\eff} = 
  \int_0^1 \frac{1 }{a_*(y)} \dd y. 
\end{align}
\end{theorem}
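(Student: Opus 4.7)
The plan is to prove tilt-EDP convergence by computing the $\Gamma$-limit of the tilted dissipation functional $\mfD^T_\eps(\cdot,\calF)$ for each $\calF \in \rmC^1(\Omega)$ and identifying the limit integrand $\calM_0(u,v,\xi) = \calN_0(u,v,\xi+\rmD\calE(u))$ as a dual dissipation sum $\wh\calR_0(u,v) \oplus \wh\calR_0^*(u,\xi)$ of the stated form; the $\Gamma$-convergence $\calE_\eps = \calE \Gto \calE$ is automatic since the energy does not depend on~$\eps$. The first step is to reformulate the primal potential via fluxes,
$$\calR_\eps(u,\dot u) = \inf\Big\{\tfrac12\int_\Omega \tfrac{j^2}{a_\eps u}\,\dd x \:\Big|\: \dot u + \pl_x j = 0 \Big\},$$
so that $\calR_\eps \oplus \calR_\eps^*$ takes the symmetric Young-Fenchel form $\tfrac{j^2}{2a_\eps u} + \tfrac{a_\eps u}{2}(\pl_x\xi)^2$, and then to decompose $\Omega$ into the left bulk $(-1,0)$, the thin layer $[0,\eps]$, and the right bulk $(\eps,1)$. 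In the two bulk pieces the coefficients $a_\pm$ are smooth and bounded below, so the standard Otto-Wasserstein theory directly yields $\Gamma$-convergence to the first two integrals appearing in $\wh\calR_0^*$.

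The crux is the layer analysis. After the rescaling $y=x/\eps$, the layer contribution becomes
$$\int_0^1 \Big[\frac{\tilde j(y)^2}{2 a_*(y)\tilde u(y)} + \frac{a_*(y)\tilde u(y)}{2}\bigl(\pl_y\tilde\xi(y)\bigr)^2\Big]\,\dd y$$
with boundary values $\tilde u(0)=u(0^-)$, $\tilde u(1)=u(0^+)$ and analogous ones for $\tilde\xi$; boundedness of $\dot u$ forces $\pl_y \tilde j=O(\eps)$, so in the limit $\tilde j\equiv J$ is the flux through the membrane, and the pre-limit coupling $\tilde\xi = -\log\tilde u + \mathrm{const}$ (from $\xi = -\rmD\calE - \rmD\calF$ with smooth $V,\calF$) reduces the problem to a cell problem in the density profile alone. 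The autonomous change of variables $\tau=\int_0^y \dd y'/a_*(y')$, $T=1/a_\eff$, $W(\tau)=\tilde u(y(\tau))$, transforms it into
$$\min_{W(0)=u(0^-),\,W(T)=u(0^+)}\;\int_0^T \frac{J^2 + W_\tau^2}{2W}\,\dd\tau.$$
The Euler-Lagrange equation admits the conserved Hamiltonian $(W_\tau^2 - J^2)/(2W)=c$, hence $W_\tau^2 = 2cW + J^2$ and $\sqrt{2cW+J^2} = c\tau + z_0$; imposing the two boundary conditions determines $c$ via a quadratic whose discriminant is $16 a_\eff^2\bigl(4 a_\eff^2 u(0^-)u(0^+) + J^2\bigr)$, and the minimum of the integral is explicitly computed to be
$$a_\eff\sqrt{u(0^-)u(0^+)}\Bigl[\mathsf C\bigl(J/(a_\eff\sqrt{u(0^-)u(0^+)})\bigr) + \mathsf C^*(\xi(0^+)-\xi(0^-))\Bigr],$$
where $\mathsf C = (\mathsf C^*)^*$ is the Legendre conjugate of the stated $\cosh$-type dual. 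This is precisely the interface term in the dual pair $\wh\calR_0\oplus\wh\calR_0^*$.

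Recovery sequences are built by gluing: bulk Otto-Wasserstein recovery sequences outside the layer, and the optimizer $\tilde u^*$ of the cell problem scaled back into $[0,\eps]$. Tilt-independence in the layer holds because $\calF(\eps y)\to\calF(0)$ uniformly, so $\calF$ contributes only a continuous shift that does not affect $\pl_y\tilde\xi$ in the limit; because jumps in $\xi$ come exclusively from jumps in $u$, varying the pair $(u(0^-),u(0^+))$ lets us probe the interface dissipation along the full two-parameter family $(\sqrt{u(0^-)u(0^+)},\log(u(0^-)/u(0^+)))$, which is enough to identify $\mathsf C^*$ uniquely. The main obstacle, and what distinguishes this result from mere \contactEDP, is establishing the \emph{separated} cosh structure rather than the pointwise Young-Fenchel contact bound $|J\,(\xi(0^+){-}\xi(0^-))|$: the former requires showing that the minimum over density profiles delivers the full Legendre-conjugate pair $(\mathsf C,\mathsf C^*)$, with the detailed-balance geometric-mean prefactor $\sqrt{u(0^-)u(0^+)}$ emerging as an algebraic identity from the hyperbolic integration of the Hamiltonian flow --- this is the technical heart of \cite{FreMie19?DKRF}.
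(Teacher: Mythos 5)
The paper does not actually prove Theorem~\ref{thm:Membr}: it explicitly defers to \cite[Thm.\,4.1]{LMPR17MOGG} for a formal derivation and to \cite{FreMie19?DKRF} for the rigorous proof, so there is no in-paper argument to compare against. Your sketch is a faithful reconstruction of that formal derivation, and the parts that can be checked line by line are consistent: the Benamou--Brenier flux reformulation puts $\calR_\eps\oplus\calR_\eps^*$ into the symmetric form $\tfrac{j^2}{2a_\eps u}+\tfrac{a_\eps u}{2}(\pl_x\xi)^2$; the substitution $\tau=\int_0^y \dd y'/a_*(y')$ indeed yields the coefficient-free cell problem on $[0,T]$ with $T=1/a_\eff$ the harmonic mean; the autonomous Lagrangian $\tfrac{J^2+W_\tau^2}{2W}$ has the conserved quantity $(W_\tau^2-J^2)/(2W)=c$ with your stated discriminant $16a_\eff^2(4a_\eff^2u(0^-)u(0^+)+J^2)$; the value of the cell problem specializes correctly (at $J=0$ it gives $2a_\eff(\sqrt{u(0^+)}-\sqrt{u(0^-)})^2=a_\eff\sqrt{u(0^-)u(0^+)}\,\mathsf C^*(\log(u(0^-)/u(0^+)))$); and the contact condition $J=a_\eff\sqrt{u(0^-)u(0^+)}\,2\sinh(\zeta/2)=a_\eff(u(0^-)-u(0^+))$ reproduces the linear transmission conditions \eqref{eq:Transmission.c}--\eqref{eq:Transmission.d}.

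What separates this sketch from a proof is exactly what you attribute to \cite{FreMie19?DKRF}, but two points deserve explicit mention. First, the $\Gamma$-convergence machinery is nontrivial: the claim that $\pl_y\tilde\jmath=O(\eps)$ in the layer only holds along sequences with bounded dissipation (which is the right restriction for the liminf, but must be said), the limsup construction requires matching traces of the time-dependent cell optimizer to the bulk recovery sequences at $x=0$ and $x=\eps$, and one must justify interchanging the time integral with the pointwise-in-time relaxation. Second, and more structurally, the assertion of \emph{tilt}-EDP convergence requires $\calM_0(u,\cdot,\cdot)$ to be a dual dissipation sum; since all admissible tilts $\rmD\calF$ are continuous across $x=0$, the interface jump $\xi(0^+)-\xi(0^-)$ is always slaved to $\log(u(0^-)/u(0^+))$, so the separated form $a_\eff\sqrt{u(0^-)u(0^+)}\bigl[\mathsf C(J/\alpha)+\mathsf C^*(\zeta)\bigr]$ is only ever tested on this slaved set. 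Your remark that varying $(u(0^-),u(0^+))$ sweeps out the full two-parameter family $(\alpha,\zeta)$ is the right observation, but it shows that the decomposition is \emph{consistent} on the probed set rather than that it is the unique dual-sum representative of $\calN_0$; making this identification precise (and distinguishing it from mere contact-EDP or force-dependent representations) is part of the content of the rigorous reference and should not be waved through.
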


While for $x\in {]-1,0[}$ and $x\in {]0,1[}$ we still have the 
entropic Otto-Wasserstein diffusion as before, a new feature develops
at the membrane at $x=0$. There, the chemical potential $\xi$
as well as the density $u$ may have have jumps which lead to
transmission conditions, as we show below. 
 
We see that $\wh\calR^*_0$ only depends on the function $a$ and not on
the tilt potential $V$. Nevertheless, this is again a case
where the effective dissipation potential $\wh\calR_0$ depends on the
energy $\calE$, but in a non-obvious way. As is discussed in
\cite{FreMie19?DKRF}, the exponential form arising in the function
$\mathsf C$ is generated through the Boltzmann entropy since
$\lambda'_\rmB(z)=\log z$. If $\lambda_\rmB$ is replaced by a function
such that $\lambda''(z)=z^{q-2}$ with $q>1$, then $\mathsf C$ will be
replaced by a function having growth like $\zeta^{1/(q-1)}$.  

As shown in \cite{LMPR17MOGG,FreMie19?DKRF}, one may consider the
case where the tilting potentials depend on $\eps$ such that
$V_\eps(x) =V_*(x/\eps)$ for $x\in [0,\eps]$ with a nontrivial
microscopic profile  $V_* \in \rmC^1([0,1])$ such that $V_\eps\in
\rmC^0([-1,1])$. In that case, simple EDP 
convergence still holds with an $\wt\calR_0^*$ of the same form as 
$\wh\calR^*_0$ in \eqref{eq:Membr.R*},
but now $a_\eff$ depends on $V_*$, namely 
\[
\frac1{a_\eff} = \ee^{-(V_*(0)+V_*(1))/2} 
  \int_0^1 \frac{\ee^{V_*(y)}}{a_*(y)} \dd y;
\]
see \cite[Thm.\,4.1]{LMPR17MOGG}.

Before closing this section, we want to highlight that the
limiting gradient-flow equation equation obtained from the linear
diffusion equation \eqref{eq:Diffusion} is again a linear equation,
but with transmission conditions at $x=0$. These transmission
conditions do not give any hint concerning the relevant kinetic
relation for such transmission conditions. Thus, $\wh\calR^*_0$ really
contains thermodynamic information not present in the following limiting
equations: 
\begin{subequations}
\label{eq:Transmission}
\begin{align}
  &\dot{u} = \pl_x \Big( a_-(x) \big( \pl_x u + u \, \pl_x V_0(x)\big)\Big) \quad \text{in }\Omega\coloneqq{]{-}1,0[}, \\
 &\dot{u} = \pl_x \Big( a_+(x) \big( \pl_x u + u \, \pl_x
 V_0(x)\big)\Big) \quad \text{in }\Omega\coloneqq{]0,1[}, \\
\label{eq:Transmission.c}
 &0 = a_-(0) \big( \pl_x u(0^-) + u(0^-) \, \pl_x V(0) \big) - a_\eff
 \big(u(0^+) - u(0^-)\big), \\
\label{eq:Transmission.d}
 &0 = a_+(0) \big( \pl_x u(0^+) + u(0^+) \, \pl_x V(0) \big) - a_\eff \big(u(0^+) - u(0^-)\big), \\
 &0 = \pl_x u(t,x) + u(t,x) \, \pl_x V_0(x) \qquad \; \, \text{at } x = -1, 1.
\end{align}
\end{subequations}
Indeed, the transmission conditions \eqref{eq:Transmission.c} and
\eqref{eq:Transmission.d} can be derived by generalizing
\cite{GliMie13GSSC} to the present non-quadratic relation. Using the 
kinetic relation in the weak form 
\begin{align*}
&\int_{-1}^1 \pl_t u \, \psi \dd x = \rmD_\xi
\wh\calR^*_0(u,\xi)[\psi]\\
& = \int_{-1}^0 \! a_- \pl_x\xi \,\pl_x \psi \,u \, \dd x
 + \int_0^1 \! a_+  \pl_x\xi \,\pl_x \psi\, u\, \dd x \, + \\
&\qquad + a_\eff \sqrt{u(0^-)u(0^+)}\: (\mathsf C^*)'\big(
\xi(0^+){-}\xi(0^-)\big) \big(\psi(0^+){-}\psi(0^-)\big) \\
&=  -\int_{-1}^0 \!\! \pl_x\big(a_- u \,\pl_x\xi\big)  \psi  \dd x
   - \int_0^1 \!\! \pl_x\big(a_+ u \,\pl_x\xi\big) \dd x \\
&\qquad + \Big[a_\eff \sqrt{u(0^-)u(0^+)}\: (\mathsf C^*)'\big(
\xi(0^+){-}\xi(0^-)\big) - a_+(0)u(0^+)\pl_x\xi(0^+)\Big]\psi(0^+) \\
&\qquad + \Big[{-}a_\eff \sqrt{u(0^-)u(0^+)}\: (\mathsf C^*)'\big(
\xi(0^+){-}\xi(0^-)\big) + a_-(0)u(0^-)\pl_x\xi(0^-)\Big]\psi(0^-)\\
&\qquad - a_-(-1)u(-1)\pl_x\xi(-1)\psi(-1) +a_+(1)u(1)\pl_x\xi(1)\psi(1)
\end{align*}
and inserting $\xi = -\rmD\calE(u)= -\log(u/w_0)= -\log u -V$, we
indeed obtain \eqref{eq:Transmission}. In particular, using the
identities
\[
\sqrt{ab}\, (\mathsf C^*)'\left(\log a-\log b\right)= \sqrt{ab}
\,2\sinh\!\left(\log(a/b)\right) =\sqrt{ab}\left( \ee^{\log(a/b)/2} {-}
\ee^{-\log(a/b)/2}\right) = a{-}b,
\]
we recover the linear transmission conditions \eqref{eq:Transmission.c} and
\eqref{eq:Transmission.d}.

\begin{remark}
  The combination of the cosh-type function $\mathsf C^*$
  in~\eqref{eq:Membr.R*} with the entropy functional $\calE$
  in~\eqref{eq:Membr-E} is witnessed in many
  systems~\cite{MiPeRe14RGFL, LMPR17MOGG, MPPR17NETP}. When
  arising in a deterministic limit of a sequence of stochastic
  processes, as described in Section~\ref{se:MarkovTilting}, this
  structure can be related to the averaging of many independent jump
  processes.

  In~\cite{MieSte19?CGED} the authors study a class of gradient
  systems for linear equations in $\R^n$. Remarkably, they show that,
  within a broad class of energy--dissipation combinations, only this
  entropy--cosh combination has the property that the dissipation
  potential is tilt-invariant. This implies that, within this class,
  only cosh-type dissipation functionals such as $\mathsf C^*$ may
  appear as limits of families converging in the tilt-EDP sense. 
\end{remark}

\section{Conclusions}
\label{se:Discussion}

This paper has focused on the derivation of effective 
kinetic relations, which describe how a state of a system changes when
the system is subject to a given force $\xi$. A thermodynamically
motivated way to implement a kinetic relation is through a
dissipation potential, so that the kinetic relation is then
expressed in the derivative form $\xi = \pl_{\dot{q}}\calR(q,
\dot{q})$ for $q \in \bfQ$. 
Gradient systems are defined as triples of a state
space $\bfQ$, an energy functional $\calE$, and a dissipation
potential $\calR$, and the induced gradient-flow equation
is found by the kinetic relation and the force given 
in the potential form $\xi = -\rmD \calE$. 

We have illuminated how different notions of convergence for families
$(\bfQ,\calE_\eps,\calR_\eps)$ of gradient systems yield gradient
structures $(\bfQ,\calE_0,\calR_0)$ with $\calR_0\in \{ \wt\calR_0,
\wh\calR_0, \calR_\eff \}$ for the same limiting gradient-flow
equation. In particular, we discussed why not all options are equally
useful.

In particular, the notion of simple EDP convergence for gradient
systems is quite general but presents a serious drawback:
the limit dissipation potential often depends on the limit energy $\calE_0$. 
This is an instance of a
force-dependent dissipation potential; such a potential has limited use, 
since it can not be applied to different forcings than the one for which it
was derived. Furthermore, simple EDP
convergence leads to `unnatural' kinetic relations: even in cases where we
expect simple linear functional forms, the result may be a
complicated nonlinear expression. We  illustrated this phenomenon
in Sections~\ref{se:WigglyDiss}  and~\ref{se:DFM}.

To remedy these problems, in Section~\ref{subsec:EDP
  convergence:tiltedEDPconvergence} we introduced two new convergence
notions for gradient systems, \emph{EDP convergence with tilting}
(tilt-EDP) and the weaker \emph{contact EDP convergence with tilting}
(contact EDP). By these concepts, tilting the sequence of microscopic
energies with a macroscopic contribution $\calF$ allows us
to explore the whole force space $\rmT^*_q \bfQ$ at any given
state $q\in \bfQ$. However, it turns out that tilt-EDP
convergence is rather restrictive: when simple EDP
convergence gives a dissipation potential that depends on the force,
then tilt-EDP convergence does not hold
(cf.\ Lemma~\ref{l:post-def-three-convergences}). In such cases,
contact-EDP is the correct choice, in that it gives a fully consistent
kinetic relation for the limit system. We have interpreted these
phenomena in general terms in Section~\ref{sec:understanding}.

One can interpret the introduction of the tilt function 
$\calF$ into a given gradient system $(\bfQ,\calE_\e,\calR_\e)$ as the
addition of a component to the system that generates an additional
energy without changing the kinetic relation. This is a first step
towards a further goal: generalize the convergence concepts of this
paper to the case in which two independent gradient systems
$(\bfQ^{1,2},\calE_\e^{1,2}, \calR_\e^{1,2})$ are connected by adding
a shared energy component $\calF_\e\colon\bfQ^1\times\bfQ^2\to\R
\cup\{\infty\}$. The aim is to define a convergence concept for the
individual systems that implies convergence of the joint system under
reasonable conditions on the joint energy~$\calF_\e$. We leave this
for future work.

\small

\bibliographystyle{my_alpha}
\bibliography{bibexport-MMP}
%

\end{document}